\date{}
\newcommand{\RR}{\mathbb{R}}
\newcommand{\CC}{\mathbb{C}}
\newcommand{\ZZ}{\mathbb{Z}}
\newtheorem{theorem}{Theorem}[section]
\newtheorem{lemma}[theorem]{Lemma}
\newtheorem{corollary}[theorem]{Corollary}
\newtheorem{proposition}[theorem]{Proposition}
\newtheorem{remark}[theorem]{Remark}
\newtheorem{conjecture}[theorem]{Conjecture}
\numberwithin{equation}{section}
\def \vphi {\varphi}
\begin{document}

 \title{Extremal  K{\"a}hler-Einstein metric for two-dimensional convex bodies}

\author{Bo'az~Klartag}
\address{Department of Mathematics, Weizmann Institute of Science, Rehovot 76100, Israel and
	School of Mathematical Sciences, Tel Aviv University, Tel Aviv 69978, Israel
 }
\email{boaz.klartag@weizmann.ac.il}

\vspace{5mm}
\author{Alexander~V.~Kolesnikov}
\address{ Higher School of Economics, Moscow,  Russia}
\email{Sascha77@mail.ru}

\thanks{
The first named author was partially supported by a European Research Council (ERC) grant.
 The
second named author was supported by the Supported by RFBR
 project 17-01-00662 and  DFG project RO 1195/12-1. The article was prepared
within the framework of the Academic Fund Program at the National Research University Higher
School of Economics (HSE) in 2017--2018 (grant No 17-01-0102) and by the Russian Academic
Excellence Project ”5-100”.
This material is based upon work supported by the National Science Foundation under Grant No. DMS-1440140 while the authors were in residence at the Mathematical Sciences Research Institute in Berkeley, California, during the Fall 2017 semester.
}

\keywords{Monge-Amp{\`e}re equation, K\"ahler-Einstein equation, Ricci tensors}

 \begin{abstract}
Given a convex body $K \subset \mathbb{R}^n$  with the barycenter at the origin we consider  the corresponding
K{\"a}hler-Einstein equation $e^{-\Phi} = \det D^2 \Phi$. If $K$ is a simplex, then the Ricci tensor of the Hessian metric
 $D^2 \Phi$ is constant and equals $\frac{n-1}{4(n+1)}$. We conjecture  that the Ricci tensor of $D^2 \Phi$
 for arbitrary $K$ is uniformly bounded by  $\frac{n-1}{4(n+1)}$ and verify this conjecture in the two-dimensional case.
 The general case remains open.
\end{abstract}

\maketitle

\section{Introduction}

We consider a convex  body  $K \subset \mathbb{R}^n$ with the barycenter at the origin
and the associated equation of the Monge-Amp{\'e}re type
\begin{equation}
\label{KE}
e^{-\Phi} = \det D^2 \Phi,
\end{equation}
\begin{equation}
 \nabla \Phi(\mathbb{R}^n)=K.
\label{eq_1026} \end{equation}
The function  $\Phi \colon \mathbb{R}^n \to \mathbb{R}$ is assumed to be  convex.
Given a solution $\Phi$ to (\ref{KE}) one considers the naturally associated Hessian (Riemannian) metric $D^2 \Phi$
on $\mathbb{R}^n$, the so-called  K\"ahler-Einstein metric. After a suitable complexification this metric indeed
becomes a K\"ahler metric which is Einstein.

The interest in this equation  is motivated by
various problems of differential and algebraic  geometry. In patricular, equation (\ref{KE}) naturally arises in the theory of
toric varietes.
The situation when $K$ is a  rational convex  polytope is of particular interest,
because in this case $K$ is a moment polytope of a toric orbifold $M$.
 We refer to \cite{Aubin}, \cite{BeBe}, \cite{WangZhu}, \cite{Donaldson}, \cite{Legendre}
for more information on K\"ahler geometry, toric varietes and the importance of the equation
(\ref{KE}) in convex geometry.
The existence and uniqueness of a solution to (\ref{KE}) under various assumptions has been proved in a series of papers
\cite{WangZhu}, \cite{Donaldson},  \cite{Legendre}, \cite{BeBe}, \cite{CEKl}.

Another motivation for study (\ref{KE}) comes from convex analysis. There are deep reasons to believe that
equation (\ref{KE}) can contribute to understanding a number of
difficult open conjectures (KLS conjecture,  slicing problem) from asymptotic convex geometry (see \cite{K_moment},
\cite{K_part_I}, \cite{KolMil}).
See also \cite{Kol}, \cite{KK}, \cite{KK2}, where Hessian metrics have been considered in a more general  measure-transportational setting.  A comprehensive overview of classical and modern results as well as open problems  in convex analysis
the reader can find in \cite{AGM}.

An important example of  $K$ is given by the simplex
$$
S = \left \{ (x_1,\ldots,x_n) \in \RR^n \, ; \, \sum_{i=1}^{n} x_i \le 1, \  \forall i, x_i \ge -1 \right \}.
$$
It is known that for $K = S$  equation  (\ref{KE}) admits an explicit solution (see Section 4.3 below).
The corresponding Hessian metric $D^2 \Phi$ is  isometric to a spherical orthant
$\{ x \in \RR^n_+ \, ; \, \sum_{i=1}^n x_i^2 = 4 (n+1) \}$.
In particular, the corresponding Ricci tensor has the form
$$
{\rm{Ric}}_{S} = \frac{n-1}{4(n+1)} \cdot D^2 \Phi.
$$

Motivated by problems from convex geometry we suggest the following conjecture.

 \begin{conjecture}
 The Ricci curvature of $D^2 \Phi$ is bounded by $\frac{n-1}{4(n+1)}$. In particular, the largest  value is realized on $S$ uniformly.
 \end{conjecture}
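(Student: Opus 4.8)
The plan is to settle the case $n=2$ addressed by the paper. I would reduce it to a pointwise upper bound on the Gauss curvature of $D^2\Phi$ and then prove that bound by a maximum principle in which equation (\ref{KE}) is used to close the estimate exactly at the extremal value $\tfrac1{12}$ (the value of $\tfrac{n-1}{4(n+1)}$ at $n=2$).

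\emph{Reduction and a formula for the curvature.} On a surface $\mathrm{Ric}=\kappa\,g$ with $\kappa$ the Gauss curvature, so the conjecture for $n=2$ is equivalent to $\kappa\le\tfrac1{12}$ everywhere on $(\RR^2,D^2\Phi)$, with the simplex $S$ realizing equality at every point. Since $g=D^2\Phi$ is a Hessian metric, the only place where fourth derivatives of $\Phi$ could enter the Brioschi formula for $\kappa$ is the entry $-\tfrac12\Phi_{1122}+\Phi_{1122}-\tfrac12\Phi_{1122}$, which vanishes; hence $\kappa\,(\det D^2\Phi)^2$ is a quadratic form in the third derivatives $\Phi_{ijk}$ with coefficients polynomial in $D^2\Phi$. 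Invariantly: writing $C_{ijk}=\Phi_{ijk}$ for the symmetric cubic form and $v_k=g^{ij}\Phi_{ijk}$ for its metric trace, one has $\mathrm{scal}(g)=\tfrac14\big(\|C\|_g^2-\|v\|_g^2\big)$; differentiating (\ref{KE}) once gives $v_k=-\Phi_k$, hence
\begin{equation*}
\kappa \;=\; \tfrac18\Big(\|C\|_g^2-|\nabla\Phi|_g^2\Big).
\end{equation*}
It is convenient to decompose $C$ into its trace part and its traceless part; in two dimensions the latter has only two components and behaves like a cubic differential for the conformal class of $g$, which should be kept in view for the next step.

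\emph{The maximum principle.} A purely pointwise bound is impossible: after using the two relations $g^{ij}\Phi_{ijk}=-\Phi_k$ to eliminate two of the third derivatives, the quadratic part of $\kappa$ in the remaining two is a positive-definite form (its matrix has determinant a positive multiple of $\det D^2\Phi$), so $\kappa$ is unbounded along the jet and can only be controlled globally. I would therefore differentiate (\ref{KE}) twice and combine this with the Bochner identity on the surface $(\RR^2,g)$ to obtain an elliptic inequality, schematically $\Delta_g w\ \ge\ Q(w)+(\text{nonnegative})$, for a suitably weighted curvature quantity $w$ built from $\kappa$ and the lower-order data $\Phi,\nabla\Phi$, the weight arranged so that $Q$ reproduces the algebra satisfied by the explicit simplex solution. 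If the supremum of $w$ is attained at an interior point $x_0$, then $\nabla w(x_0)=0$ and $\Delta_g w(x_0)\le0$, and the inequality degenerates into an algebraic relation forcing $\kappa(x_0)\le\tfrac1{12}$; inspecting the equality case should pin $C$ to the simplex model and force $K=S$. Since $D^2\Phi$ is in general incomplete — for $K=S$ it is the open spherical orthant $\{x\in\RR^2_+:x_1^2+x_2^2=12\}$ — one cannot merely invoke compactness, and one must control $w$ along the ends as well: here the normalization $\nabla\Phi(\RR^2)=K$ with $K$ bounded (which is forced by the barycenter hypothesis through solvability of (\ref{KE})--(\ref{eq_1026})) pins the growth of $\Phi$ to the support function of $K$ and, after a localization showing that near each boundary direction of $K$ the metric is asymptotic to a simplex model, should yield $\limsup\kappa\le\tfrac1{12}$ along every end.

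The step I expect to be the real obstacle is the second-order computation above: producing a differential inequality that \emph{sees the sharp constant}. A crude Bochner estimate only gives boundedness of $\kappa$; the correct weighted quantity $w$ together with a nontrivial algebraic identity — available only in dimension two, where the traceless cubic form has exactly two degrees of freedom — is what makes the simplex exactly extremal. The secondary, more technical, difficulty is the non-compactness and incompleteness of $(\RR^2,D^2\Phi)$. A more structural alternative, which I would try first, is to verify that the traceless part of $C$ is a holomorphic cubic differential for $(\RR^2,g)$ — as for affine spheres — and then to exploit the Bochner formula for that holomorphic section; this route would also clarify the special role of dimension two.
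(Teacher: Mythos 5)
Your plan correctly reduces the two-dimensional case to the bound $\kappa\le\frac1{12}$ for the Gauss curvature, and your formula $\kappa=\frac18\bigl(\|C\|_g^2-|\nabla\Phi|_g^2\bigr)$ agrees with the paper's trace identity $2\lambda=\Phi_{abc}\Phi^{abc}-|\nabla\Phi|^2$, $\lambda=4R$. The overall strategy (a maximum principle for a curvature quantity, exploiting the low number of degrees of freedom of the cubic form in dimension two) is also the strategy of the paper. But what you have written is a research plan, not a proof: the two steps you yourself flag as the obstacles are precisely the content of the paper, and neither is supplied. First, the differential inequality ``seeing the sharp constant'' is left entirely schematic ($\Delta_g w\ge Q(w)+\dots$ for an unspecified $w$ and $Q$). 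The paper's Theorem 3.1 is an exact identity for $2L(\lambda)$, and obtaining it is not a routine Bochner computation: one must express the fourth-order tensor $Q_{abcd}=\nabla_a\Phi_{bcd}$ through $\lambda,\nabla\lambda,\nabla\Phi$ by solving, in the eigenframe of $\nabla^2\Phi$, a $5\times 5$ linear system coming from the differentiated trace identities (this closure is exactly the two-dimensional miracle), and then a further set of algebraic identities (Lemma 3.7) to eliminate the remaining third derivatives. Without producing this identity, or some substitute with the constant $\frac13$ for $\lambda$ built in, the argument does not exist; a crude estimate gives only some bound on $\kappa$, not $\frac1{12}$, as you acknowledge.

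Second, the noncompactness/incompleteness issue is not resolved by your suggestion that the growth of $\Phi$ ``pins'' the metric to a simplex model near each end and yields $\limsup\kappa\le\frac1{12}$ there. That would amount to proving the theorem at infinity by an unproven asymptotic classification, and it is not what happens for general $K$ (for the ball the curvature is even unbounded below, and the end geometry depends on the face structure of $\partial K$). The paper instead justifies the maximum principle by a different, genuinely nontrivial route: for rational polytopes it invokes Legendre's result that the toric K\"ahler--Einstein metric completes to a compact orbifold, so $\lambda$ is bounded and $\lambda e^{-\varepsilon\Phi}$ attains a maximum; the general body is then treated by approximation with rational polytopes, where passing the curvature bound to the limit uses local $C^2$ convergence of the potentials (itself resting on the third-derivative a priori estimates of the Appendix) together with a Rauch/area comparison argument rather than pointwise convergence of curvatures. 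Your alternative suggestion that the traceless cubic form is a holomorphic cubic differential is also unsubstantiated: that holomorphy is special to the affine-sphere equation, while here $\det D^2\Phi=e^{-\Phi}$ produces lower-order terms in the relevant $\bar\partial$-equation. Finally, the claim that the equality analysis would force $K=S$ goes beyond both your argument and the paper. As it stands the proposal identifies the right reduction and the right general mechanism, but the sharp identity and the control at infinity --- the actual mathematical content --- are missing.
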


The aim of this article is to provide an affirmative answer to this conjecture in the two-dimensional case.
 We pursue the approach initiated in the classical papers of Calabi  and prove the main resut by differentiating equation
(\ref{KE}) and applying the maximum principle.  Here we follow the computational technique developed in earlier papers \cite{Kol}, \cite{KK}, \cite{KK2}.
Finally, we present the computations for the simplex to demonstrate that our result is sharp, and also for the ball and the cube. The general problem remains open.

\section{ Notations and preliminary results}

We assume throughout that we are given the standard Euclidean coordinate system  $\{x_i\}$.
For an arbitrary convex body $K$ satisfying
$$
\int_{K} x_i dx =0
$$
we consider $\Phi$, the smooth solution  to (\ref{KE}) which is uniquely-determined up to translation.

The space $\mathbb{R}^n$ is equipped with the metric
$$
h = h_{ij} dx^i dx^j = \Phi_{ij} dx^i dx^j = (\partial^2_{x_i x_j} \Phi) dx^i dx^j
$$
and with the measure $\mu = e^{-\Phi} dx$.

We give below a list of useful computational formulas, the reader can find the proof in \cite{Kol}.
 It is convenient to use the following notation:
 $$
 \Phi_i = \partial_{x_i} \Phi, \ \Phi_{ij} = \partial^2_{x_i x_j} \Phi, \  \Phi_{ijk} = \partial^3_{x_i x_j x_k} \Phi
 $$
 We follow the standard conventions of  Riemannian geometry (i.e., $\Phi^{ij}$ is inverse to $\Phi_{ij}$, Einstein summation, raising indices etc.).

 The measure $\mu$ has the following density with respect to the Riemannian volume
 $$
 \mu = e^{-P} d vol_{h}, \ \ P = \frac{1}{2} \Phi.
 $$

 The associated diffusion generator (weighted Laplacian) $L$ has the form
  $$
L f = \Phi^{ij} f_{ij}.
 $$

 Differentiating the K{\"a}hler-Einstein equation, one gets
 the folowing important identity, that for any fixed index $i$, setting $f = \Phi_i$,
 \begin{equation}
 \label{eigenf}
 L f = \Phi_{iab} \Phi^{ab} = - \Phi_i.
 \end{equation}

 The following non-negative symmetric tensor $g$  plays prominent role in our analysis
 $$
 g_{ij} = \Phi_{iab} \Phi_j^{ab}.
 $$

Finally, we give a list  of formulas for the most important quantities (see  \cite{Kol}, \cite{KK2}).

 \begin{enumerate}
\item Connection
 $$
 \Gamma^k_{ij} = \frac{1}{2} \Phi^k_{ij}.
 $$
 \item Hessian of $f$
 $$
 \nabla^2_h f_{ij} = f_{ij} - \frac{1}{2} \Phi^k_{ij} f_k.
 $$
\item  Riemann tensor
 $$
\mbox{\rm{Riem}}_{ikjl} = \frac{1}{4} (\Phi_{ila} \Phi^{a}_{kj} - \Phi_{ija} \Phi^{a}_{kl}).
$$
\item Ricci tensor
$$
\mbox{Ric}_{ij} = \frac{1}{4}  \Bigl( \Phi_{iab} \Phi^{ab}_j + \Phi_{ijk} \Phi^{k} \Bigr) = \frac{1}{4} (g_{ij} + \Phi_{ijk} \Phi^k ).
$$
\item Bakry-Emery tensor
$$
(\mbox{Ric}_{\mu})_{i,j}  =  \mbox{Ric}_{ij}  + \nabla_h^2 P_{ij} =
\frac{1}{4} g_{ij} + \frac{1}{2} \Phi_{ij}.
$$
\end{enumerate}

Let us recall some details about  computations of the weighted Laplacian.
  We stress that in this section {\bf we omit the subscript $h$} for the sake of simplicity, i.e.
  the symbols $\nabla, \nabla^2$ etc. are always related to the Hessian metric $h$, but not to Eudlidean metric.

Let us recall that given a tensor $T$  its Laplacian is defined  as follows:
  $$
  \Delta T = \Phi^{pq} \nabla_p \nabla_q T.
  $$
   Here $\nabla_p T$ is the covariant derivative, which means, in particular, that
   $$
   \nabla_p \Phi_{ij}=0, \ \Delta \Phi_{ij}=0.
   $$
   Similarly, one can compute the weighted Laplacian
   $$
   LT = \Delta T - \frac{1}{2} \Phi^k \nabla_k T.
   $$

The following weighted Leplacians of several important tensors  are taken from \cite{KK2}.

\begin{lemma}
\label{lphi}
$$
L \Phi_i =  \frac{1}{2} \Phi_i + \frac{1}{4} g_{i}^k \Phi_k.
$$
\end{lemma}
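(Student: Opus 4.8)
\emph{Proof proposal.} The plan is to read $L\Phi_i$ as the weighted Bochner (rough) Laplacian of the exact $1$-form $d\Phi$, whose components in the coordinates $\{x_i\}$ are the $\Phi_i$; this is the meaning of the formula $LT=\Delta T-\frac12\Phi^k\nabla_k T$ recalled above, applied to the tensor $T=d\Phi$. It should be stressed that this is genuinely different from the scalar identity (\ref{eigenf}), in which $L$ acts on $\Phi_i$ as a function and returns $-\Phi_i$; the discrepancy between the two answers is exactly a curvature term. The engine of the argument is the weighted Weitzenb\"ock identity: for every smooth $f$,
\[
L(\nabla_i f)=\nabla_i(Lf)+(\mathrm{Ric}_\mu)_{ij}\,\nabla^j f,
\]
where $\mathrm{Ric}_\mu=\mathrm{Ric}+\nabla^2 P$, $P=\frac12\Phi$, is the Bakry--\'Emery tensor listed in item (5). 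I would either cite this from \cite{Kol}, \cite{KK2}, or derive it from the classical Weitzenb\"ock formula $\Delta(df)=d(\Delta f)+\mathrm{Ric}(\nabla f,\cdot)$ on the Hessian manifold $(\RR^n,h)$ together with the correction $-\frac12\Phi^k\nabla_k$ produced by the density $e^{-P}$; the curvature contraction that enters is precisely the one already computed in items (3)--(5).

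Applying this identity to $f=\Phi$ is what makes the computation collapse. Since $Lg=\Phi^{ij}g_{ij}$ for a function $g$, we have $L\Phi=\Phi^{ij}\Phi_{ij}=n$, a constant, so $\nabla_i(L\Phi)=0$ and the first term on the right drops out. Hence
\[
L\Phi_i=(\mathrm{Ric}_\mu)_{ij}\,\Phi^j=\Bigl(\tfrac14 g_{ij}+\tfrac12\Phi_{ij}\Bigr)\Phi^j,
\]
using the explicit expression for $\mathrm{Ric}_\mu$ in item (5). Finally $\Phi_{ij}\Phi^j=\Phi_{ij}\Phi^{jk}\Phi_k=\Phi_i$ because $(\Phi^{jk})$ is inverse to $(\Phi_{jk})$, and $g_{ij}\Phi^j=g_i^k\Phi_k$, so the right-hand side equals $\frac12\Phi_i+\frac14 g_i^k\Phi_k$, which is the claim.

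Even if one spells out the Weitzenb\"ock step rather than quoting it, the mechanism is the same. Writing $\Delta\Phi_i=\nabla_i(\Delta\Phi)+\mathrm{Ric}_{ij}\Phi^j$ (the curvature term coming from commuting $\nabla_p$ past $\nabla_i$ on the $1$-form $d\Phi$), one computes $\Delta\Phi=\Phi^{ij}(\Phi_{ij}-\frac12\Phi_{ijk}\Phi^k)=n+\frac12\Phi_k\Phi^k$ with the help of (\ref{eigenf}), hence $\nabla_i(\Delta\Phi)=\Phi^k(\nabla^2\Phi)_{ik}$ using $\nabla h=0$ and the Hessian formula in item (2); then, adding $-\frac12\Phi^k\nabla_k\Phi_i$ and substituting the Ricci formula of item (4), the three third-order terms proportional to $\Phi_{ikm}\Phi^k\Phi^m$ (one from $\nabla_i(\Delta\Phi)$ with coefficient $-\frac12$, one from $-\frac12\Phi^k\nabla_k\Phi_i$ with coefficient $\frac14$, and one from the term $\frac14\Phi_{ijk}\Phi^k$ in $\mathrm{Ric}_{ij}$ with coefficient $\frac14$) cancel identically, leaving $\frac12\Phi_i+\frac14 g_i^k\Phi_k$. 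Either way, the only point requiring real care is the sign bookkeeping in the curvature term; once item (5) is taken as given, the lemma is essentially the two-line substitution above, the crucial observation being that $L\Phi$ is the constant $n$.
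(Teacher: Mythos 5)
Your argument is correct. Note first that the paper itself does not prove Lemma \ref{lphi}: it is quoted from \cite{KK2}, where such identities are obtained by direct computation with the explicit connection $\Gamma^k_{ij}=\frac12\Phi^k_{ij}$, commuting covariant derivatives by hand. Your route packages exactly that commutation into the weighted (Bakry--\'Emery) Weitzenb\"ock identity $L(\nabla_i f)=\nabla_i(Lf)+(\mathrm{Ric}_\mu)_{ij}\nabla^j f$, and then everything reduces to three facts that are all consistent with the paper's conventions: $L\Phi=\Phi^{ij}\Phi_{ij}=n$ (this uses the identification $Lf=\Phi^{ij}f_{ij}$ of the weighted Laplacian on functions, which in turn rests on (T1), i.e.\ $\Phi^{ij}\Phi^k_{ij}=-\Phi^k$ --- worth flagging explicitly, though the paper asserts it); the formula $(\mathrm{Ric}_\mu)_{ij}=\frac14 g_{ij}+\frac12\Phi_{ij}$ of item (5); and $\Phi_{ij}\Phi^j=\Phi_i$. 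Your spelled-out variant is also correct: I checked the bookkeeping, and the three terms proportional to $\Phi_{ikm}\Phi^k\Phi^m$ carry coefficients $-\frac12$, $+\frac14$, $+\frac14$ and cancel, leaving $\frac12\Phi_i+\frac14 g_i^k\Phi_k$. You are also right to stress the distinction between the tensorial $L\Phi_i$ of the lemma and the scalar identity (\ref{eigenf}), whose discrepancy is precisely the $\mathrm{Ric}_\mu$ term. What your approach buys is brevity and transparency given the list of formulas (1)--(5) already recorded in the paper; what the direct computation in \cite{KK2} buys is independence from the Bochner machinery and uniform treatment of the higher-order analogues such as Lemma \ref{Lphi3} and Lemma \ref{lgij}, where no such shortcut through $L\Phi=n$ is available.
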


     \begin{lemma}
     \label{Lphi3}
     $$
L \Phi_{iab}  =\frac{1}{2} \Phi_{iab}
- \frac{1}{2} \Phi^{l}_{ik} \Phi^{m}_{al} \Phi^{k}_{bm}
+ \frac{1}{4} \Bigl( g^{k}_{i} \Phi_{kab} +  g^{k}_{a} \Phi_{kib} +  g^{k}_{b} \Phi_{kia} \Bigr).
$$
\end{lemma}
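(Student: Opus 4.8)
The plan is to regard $\Phi_{iab}$ as the components, in the fixed Euclidean chart, of the totally symmetric covariant three-tensor $C_{iab} := \Phi_{iab}$, and to compute $L\Phi_{iab} = \Phi^{pq}\nabla_p\nabla_q\Phi_{iab} - \tfrac12\Phi^k\nabla_k\Phi_{iab}$ by brute differentiation. First I would record two kinds of preliminary data. From the list of formulas, $\Gamma^k_{ij} = \tfrac12\Phi^k_{ij}$, hence
$$\nabla_l\Phi_{ijk} = \Phi_{lijk} - \tfrac12\bigl(\Phi^m_{li}\Phi_{mjk} + \Phi^m_{lj}\Phi_{mik} + \Phi^m_{lk}\Phi_{mij}\bigr),$$
which one checks is totally symmetric in $l,i,j,k$; and, using the Hessian formula from the list together with $\Phi^k_{ab}\Phi_{ik} = \Phi_{iab}$, one also gets $\Phi_{iab} = 2\,\nabla^2_h(\Phi_i)_{ab}$. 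On the other hand, differentiating the scalar equation $-\Phi = \log\det D^2\Phi$ and using $\partial_k\Phi^{ab} = -\Phi^{ac}\Phi^{bd}\Phi_{kcd}$ at each step, one derivative gives (\ref{eigenf}), $\Phi^{ab}\Phi_{iab} = -\Phi_i$, a second gives $\Phi^{ab}\Phi_{ijab} = g_{ij} - \Phi_{ij}$, and a third expresses the traced fifth derivative $\Phi^{ab}\Phi_{ijkab}$ through $\partial_k g_{ij}$, $\Phi_{ijk}$ and a term quartic in $C$.

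Then the computation proceeds as follows. Differentiating the displayed formula for $\nabla C$ once more and replacing every Christoffel symbol by $\tfrac12 D^2\Phi$ turns $\Delta\Phi_{iab}$ and the drift term into explicit $\Phi^{ij}$-contractions of partial derivatives of $\Phi$ of orders at most five. Feeding in the three differentiated Monge--Amp\`ere identities eliminates all fourth and fifth order derivatives of $\Phi$: the traced fifth derivative is rewritten through $\partial g$ plus lower order terms, and the resulting $\partial g$ terms (and the remaining fourth order contribution) are in turn traded, via the second identity differentiated once more, for terms cubic in $C$. Collecting what survives, the linear term $\tfrac12\Phi_{iab}$ and the cyclically symmetrized combination $\tfrac14(g^k_i\Phi_{kab} + g^k_a\Phi_{kib} + g^k_b\Phi_{kia})$ come out of the principal parts, while the leftover product of three third derivatives reorganizes as $-\tfrac12\Phi^l_{ik}\Phi^m_{al}\Phi^k_{bm}$. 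A more structural, essentially equivalent route is to exploit $\Phi_{iab} = 2\,\nabla^2_h(\Phi_i)_{ab}$ and a weighted Bochner (Lichnerowicz) identity commuting $L$ past the two covariant derivatives acting on the function $\Phi_i$: the base inputs are $L\Phi_i = -\Phi_i$ (the scalar form of (\ref{eigenf})) and Lemma \ref{lphi}, and the correction terms are exactly $\mathrm{Riem}$, $\mathrm{Ric}_\mu$ and $\nabla\mathrm{Ric}_\mu$ contracted with $\nabla^2_h\Phi_i = \tfrac12\Phi_{iab}$ and $\nabla_h\Phi_i$, into which one substitutes $\mathrm{Riem}_{ikjl} = \tfrac14(\Phi_{ila}\Phi^a_{kj} - \Phi_{ija}\Phi^a_{kl})$ and $\mathrm{Ric}_\mu = \tfrac14 g + \tfrac12 h$ from the list.

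The one genuine obstacle is the bookkeeping. At the intermediate stage one faces a proliferation of terms of the same homogeneity --- products of three third derivatives with two metric contractions, fourth-derivative-times-first-derivative terms, and fifth-derivative terms --- and it takes care to see that all high-order pieces cancel or reduce (in the Bochner version this is where the contracted second Bianchi identity, applied to $\mathrm{Ric}_\mu = \tfrac14 g + \tfrac12 h$, enters to control the $\nabla\mathrm{Ric}_\mu$ contributions), that every part not symmetric in $i,a,b$ cancels, and that the coefficients $\tfrac12$, $-\tfrac12$, $\tfrac14$ come out exactly. Two cheap checks keep the computation honest: contracting the claimed identity with $\Phi^{ab}$ must give $-L\Phi_i$ as in Lemma \ref{lphi}, since $\nabla_k$ annihilates $\Phi^{ab}$ and $\Phi^{ab}\Phi_{iab} = -\Phi_i$ (a short computation using (\ref{eigenf}) confirms that the right-hand side indeed contracts to $-\tfrac12\Phi_i - \tfrac14 g^k_i\Phi_k$); and the right-hand side is visibly totally symmetric in $i,a,b$, because $\Phi^l_{ik}\Phi^m_{al}\Phi^k_{bm} = \mathrm{tr}(A_iA_aA_b)$ for the $h$-self-adjoint operators $(A_j)^a_{\ b} = \Phi^a_{jb}$, so that the trace is invariant under all permutations of $i,a,b$, while $g$ is symmetric.
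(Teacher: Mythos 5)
The paper itself does not prove Lemma \ref{Lphi3}: it is imported from \cite{KK2} (``taken from [KK2]''), so there is no in-text argument to compare with line by line. Your primary route --- treating $\Phi_{iab}$ as a $3$-tensor in the fixed chart, using $\nabla_l\Phi_{ijk}=\Phi_{lijk}-\tfrac12\bigl(\Phi^m_{li}\Phi_{mjk}+\Phi^m_{lj}\Phi_{mik}+\Phi^m_{lk}\Phi_{mij}\bigr)$, and eliminating the traced fourth- and fifth-order derivatives through the successively differentiated Monge--Amp\`ere equation --- is the standard direct computation for identities of this type and, as far as one can tell, the one behind the citation. The ingredients you state all check out: $(\nabla^2_h\Phi_i)_{ab}=\tfrac12\Phi_{iab}$, $\Phi^{ab}\Phi_{ijab}=g_{ij}-\Phi_{ij}$, the total symmetry of $\nabla_l\Phi_{ijk}$, the total symmetry of $\Phi^l_{ik}\Phi^m_{al}\Phi^k_{bm}=\mathrm{tr}(A_iA_aA_b)$ for $h$-self-adjoint $A_j$, and the trace check: using $g^k_a\Phi^{ab}=g^{kb}$ and $\Phi^{ab}\Phi^l_{ik}\Phi^m_{al}\Phi^k_{bm}=g^{ab}\Phi_{iab}$, the right-hand side of the lemma contracted with $\Phi^{ab}$ is indeed $-\tfrac12\Phi_i-\tfrac14 g^k_i\Phi_k=-L\Phi_i$, consistent with Lemma \ref{lphi}.

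Two caveats. First, the lemma \emph{is} the bookkeeping you defer: as written you assert, rather than show, that all fourth- and fifth-order contributions cancel and that the survivors have exactly the three displayed shapes with coefficients $\tfrac12$, $-\tfrac12$, $\tfrac14$; the trace check only pins the coefficients down once one already knows the answer has this form, so the computation still has to be carried out to call this a proof. Second, the alternative ``Bochner'' route needs more care than you indicate: in $\Phi_{iab}=2(\nabla^2_h\Phi_i)_{ab}$ the index $i$ is a scalar label (a family of functions $\Phi_i$), whereas the $L$ in the lemma is the weighted Laplacian of the full $3$-tensor, whose covariant derivatives also act on the $i$-slot; reconciling the family-of-Hessians computation with the $3$-tensor Laplacian costs additional Christoffel corrections in $i$ that your sketch does not account for. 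The flat-chart computation, which handles all three slots symmetrically from the start, is therefore the safer of your two options.
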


\begin{lemma}
\label{lgij}
\begin{align*}
L g_{ij} =  g_{ij} +\frac{1}{2} g_{ki} g^{k}_{j} + 2 \nabla_p \Phi_{iab} \nabla^p  \Phi_{j}^{ab}
+    8 \mbox{\rm{Riem}}_{iabc} \mbox{\rm{Riem}}_{j}^{abc}.
\end{align*}
\end{lemma}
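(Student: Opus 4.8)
The plan is to compute $L g_{ij}$ from Lemma~\ref{Lphi3} by a product rule and then reorganise the outcome into curvature quantities. Since $g_{ij}=\Phi_{iab}\Phi_j^{ab}$ is a metric contraction of the totally symmetric tensor $\Phi_{iab}$ with itself, and since $\nabla$ is compatible with $h$ (so that $L=\Delta-\tfrac12\Phi^k\nabla_k$ commutes with raising, lowering and contracting indices), the operator $L$ obeys the diffusion Leibniz rule $L(ST)=(LS)T+S(LT)+2\,\Phi^{pq}\nabla_pS\,\nabla_qT$. The first step is therefore
\begin{equation*}
L g_{ij} = (L\Phi_{iab})\,\Phi_j^{ab} + \Phi_{iab}\,(L\Phi_j^{ab}) + 2\,\nabla_p\Phi_{iab}\,\nabla^p\Phi_j^{ab}.
\end{equation*}
The last summand is already the third term of the claimed identity, and the first two summands are transposes of one another under $i\leftrightarrow j$, so it suffices to handle one of them carefully.

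Next I would substitute Lemma~\ref{Lphi3} — for the second summand in the form obtained by raising $a,b$ and relabelling $i\mapsto j$, which is legitimate because $\Phi_{iab}$ is totally symmetric and $\nabla$ is metric-compatible — and carry out the contractions. The summand $\tfrac12\Phi_{iab}$ yields $\tfrac12 g_{ij}$ out of each of the two Laplacian terms, hence $g_{ij}$; the summand $\tfrac14 g^k_i\Phi_{kab}$ contracts against $\Phi_j^{ab}$ to $\tfrac14 g^k_i g_{kj}=\tfrac14 g_{ki}g^k_j$, again from each, hence $\tfrac12 g_{ki}g^k_j$. That reproduces the first two terms of the statement. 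What remains is a combination of contractions of four third-order derivatives: the ``triangle'' contractions coming from the off-diagonal symmetrisation summands $g^k_a\Phi_{kib}$ and $g^k_b\Phi_{kia}$ (each against $\Phi_j^{ab}$), and the contraction $\Phi^l_{ik}\Phi^m_{al}\Phi^k_{bm}\,\Phi_j^{ab}$ coming from the cubic summand.

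The main obstacle is the purely algebraic identity asserting that this leftover combination equals $8\,\mathrm{Riem}_{iabc}\mathrm{Riem}_j^{abc}$. I would prove it by expanding the right-hand side with $\mathrm{Riem}_{ikjl}=\tfrac14(\Phi_{ila}\Phi^a_{kj}-\Phi_{ija}\Phi^a_{kl})$, multiplying out the product into its four scalar contractions, and then using the total symmetry of $\Phi_{abc}$ together with relabelling of summation indices to see which of these contractions coincide and which partially collapse via $\Phi^s_{ab}\Phi^{tab}=g^{st}$ and, if needed, the Kähler--Einstein relation $\Phi_{iab}\Phi^{ab}=-\Phi_i$ from (\ref{eigenf}). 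Comparing the result term by term with the leftover contractions, and pinning down the coefficient $8$ and all the signs, is the one genuinely delicate computation; the remaining steps are mechanical bookkeeping, although one should also check that commuting $L$ through the contraction loses no lower-order terms and that the transposed form of Lemma~\ref{Lphi3} has been inserted correctly.
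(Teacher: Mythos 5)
The paper itself does not prove Lemma \ref{lgij}: it is quoted, together with Lemmas \ref{lphi} and \ref{Lphi3}, from \cite{KK2}, so there is no in-paper argument to compare against. Your route --- the diffusion Leibniz rule $L(S\cdot T)=(LS)\cdot T+S\cdot(LT)+2\nabla_p S\cdot\nabla^p T$ applied to the contraction $g_{ij}=\Phi_{iab}\Phi_j^{ab}$ (legitimate since $\nabla$ is the Levi-Civita connection of $h$, so raising and contracting commute with $L$), followed by substitution of Lemma \ref{Lphi3} --- is the natural one, and the single step you flagged as delicate does close, with the coefficients exactly as claimed. Concretely, beyond $g_{ij}+\tfrac12 g_{ki}g^k_j$ and the gradient term, the two halves $(L\Phi_{iab})\Phi_j{}^{ab}$ and $\Phi_{iab}(L\Phi_j{}^{ab})$ leave precisely $Y_{ij}-X_{ij}$, where
\begin{equation*}
Y_{ij}=g^{kl}\,\Phi_{kib}\,\Phi_{lj}{}^{b},\qquad
X_{ij}=\Phi_{ikl}\,\Phi_{jmn}\,\Phi^{km}{}_{p}\,\Phi^{lnp};
\end{equation*}
indeed each of the two symmetrisation terms $g^k_a\Phi_{kib}$, $g^k_b\Phi_{kia}$ contributes $\tfrac14 Y_{ij}$ per half and the cubic term contributes $-\tfrac12 X_{ij}$ per half, and both $X_{ij}$ and $Y_{ij}$ are symmetric in $i,j$ (relabel dummies), which is what allows the transposed half to be added as you propose. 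On the other side, writing $\mathrm{Riem}_{iabc}=\tfrac14(\Phi_{icm}\Phi^m{}_{ab}-\Phi_{ibm}\Phi^m{}_{ac})$ from the paper's formula and expanding $\mathrm{Riem}_{iabc}\mathrm{Riem}_j{}^{abc}$ into its four contractions, the two diagonal products collapse through $\Phi^m{}_{ab}\Phi^{nab}=g^{mn}$ to $\tfrac1{16}Y_{ij}$ each, while the two cross products are each $-\tfrac1{16}X_{ij}$; hence $\mathrm{Riem}_{iabc}\mathrm{Riem}_j{}^{abc}=\tfrac18(Y_{ij}-X_{ij})$, i.e.\ $Y_{ij}-X_{ij}=8\,\mathrm{Riem}_{iabc}\mathrm{Riem}_j{}^{abc}$, which is the missing identity. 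Note that the K\"ahler--Einstein trace relation (\ref{eigenf}), which you mention as possibly needed, plays no role here; the identity is purely algebraic in the totally symmetric tensor $\Phi_{abc}$ and the metric.
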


Let us specify some of these results in the two-dimensional case.

  We deal throughout with the  orthonormal frame $(n,v)$, where
 $$n = \frac{\nabla \Phi}{|\nabla \Phi|}.$$
 Therefore $\Phi_n = 1$ and $\Phi_v = 0$.
  We will often write $\Phi_{ijn} |\nabla \Phi|$ instead of   $\Phi_{ijk}\Phi^k$.

  Let us recall that in the two-dimensional case $$   \mbox{Ric}_{ij} = R \Phi_{ij},$$ where $R$ is the sectional curvature.
  In order to make the formulas less heavy  we will use throughout the following quantity instead of $R$:
  $$
  \lambda = 4 R.
     $$
     In particular, we have

\begin{equation}
4 \mbox{Ric}_{ij} =  g_{ij} + \Phi_{ijn} |\nabla \Phi| = \lambda \Phi_{ij}.
\label{eq_957} \end{equation}
  Applying this identity to couples of vectors $(n,n), (v,v), (n,v)$, one gets
  $$
  g(n,n) + \Phi_{nnn}|\nabla \Phi| =
  g(v,v) + \Phi_{vvn}|\nabla \Phi|
 = \lambda
   $$
   $$
     g(n,v) + \Phi_{vnn}|\nabla \Phi| =0.
   $$

   \begin{remark} (Bound for $\lambda$ from below).
   \label{below-bound}
Applying identities
$$
g(v,v)= \Phi_{vvv}^2 + 2 \Phi_{vvn}^2 + \Phi_{vnn}^2
$$
and
$$
g(v,v) + \Phi_{vvn}|\nabla \Phi| = \lambda
$$
we get
$$
\lambda \ge 2 \Phi_{vvn}^2 +  \Phi_{vvn}|\nabla \Phi| \ge - \frac{|\nabla \Phi|^2}{8}.
$$
      \end{remark}

  \begin{remark}
The two-dimensional  Riemann  tensor has a particular simple structure
  $$
 { \rm{Riem}}_{ijkl}
  = \frac{1}{4} (\Phi_{ila}\Phi^{a}_{kj} -  \Phi_{ika} \Phi_{jla}) = R (\Phi_{ik} \Phi_{j \ell} - \Phi_{i \ell} \Phi_{j k} ).
  $$
Hence
  $$
   { \rm{Riem}}_{nvnv} = \frac{1}{4} (\Phi_{nva}\Phi^{a}_{nv} -  \Phi_{nna} \Phi_{vva}) =  R =\frac{\lambda}{4}
  $$
  $$
  { \rm{Riem}}_{nvnv}  = -{ \rm{Riem}}_{vnnv} = - { \rm{Riem}}_{nvvn}
  = { \rm{Riem}}_{vnvn}  =  R= \frac{\lambda}{4}.
  $$
  Other components equal zero.

   In particular
   \begin{equation}
   \label{riem2}{\rm{Riem}}_{abcd} {\rm{Riem}}^{abcd} = 4 R^2 =\frac{\lambda^2}{4}.
   \end{equation}
   \end{remark}

   We list below other  important identities which will be applied in the computations.
   First, there are several {\bf trace identities } obtained by differentiating the K{\"a}hler-Einstein equation (see
   (\ref{eigenf})).

\begin{itemize}
\item[(T1)]
$$
\Phi_{abc} \Phi^{ab} = - \Phi_c
$$
\item[(T2)]
Taking the covariant derivative of (T1) one gets
   $$\nabla_a \Phi_{bcd} \Phi^{cd} = - (\nabla^2 \Phi)_{ab}$$
   \item[(T3)]
   Taking trace of $\lambda \Phi_{ij} = g_{ij} + \Phi_{ijk} \Phi^k$ one gets
   $$
   2 \lambda = {\rm Tr} g - |\nabla \Phi|^2 = \Phi_{abc} \Phi^{abc} - |\nabla \Phi|^2.
   $$
   \item[(T4)]    Differentiating the identity (T3) one gets
   \begin{equation}
   \label{1-ord}
   (\nabla_p \Phi_{abc}) \Phi^{abc} = (\nabla^2 \Phi)_{pk} \Phi^k + \nabla_p \lambda =  (\nabla^2 \Phi)_{pn} |\nabla \Phi| + \nabla_p \lambda.
   \end{equation}
\end{itemize}

Similar computations show:

   \begin{lemma}
\begin{itemize}
\item[(Tr)] Trace
$$
{\rm Tr} (\nabla^2 \Phi) = \Phi^{ij} \left( \Phi_{ij} - \frac{1}{2} \Phi_{ij}^k \Phi_k \right) = 2 + \frac{|\nabla \Phi|^2}{2}.
$$
\item[(HS)] Hilbert-Schmidt norm
$$
   \|\nabla^2 \Phi\|^2 = (\Phi_{ij} - \frac{1}{2} \Phi_{ij}^k \Phi_k) (\Phi^{ij} - \frac{1}{2} \Phi^{ij}_{\ell} \Phi^{\ell})
     = 2 +   |\nabla \Phi|^2  +  \frac{1}{4}g(n,n)|\nabla \Phi|^2.
$$
   \item[(D)] Determinant
    $$
    \det \nabla^2 \Phi = \frac{1}{2} \left[ {\rm Tr} (\nabla^2 \Phi) \right] ^2 - \frac{1}{2} \| \nabla^2 \Phi \|^2   =
     1 + \frac{|\nabla \Phi|^2}{2} + \frac{|\nabla \Phi|^2}{8} \bigl( |\nabla \Phi|^2 - g(n,n) \bigr).
     $$
\end{itemize}
\end{lemma}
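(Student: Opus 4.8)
The plan is to reduce all three formulas to the single structural identity of item~(2) above for the Hessian of $\Phi$ with respect to the metric $h$,
$$(\nabla^2 \Phi)_{ij} = \Phi_{ij} - \tfrac12 \Phi_{ij}^k \Phi_k = \Phi_{ij} - \tfrac12 \Phi_{ijk}\Phi^k,$$
and then to contract, using nothing beyond the trace identity (T1), namely $\Phi_{abc}\Phi^{ab} = -\Phi_c$, the two-dimensional normalization $\Phi^{ij}\Phi_{ij}=2$, and the definition $g_{ij} = \Phi_{iab}\Phi_j^{ab}$.

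For (Tr) I would contract $(\nabla^2\Phi)_{ij}$ with $\Phi^{ij}$: the first term contributes $\Phi^{ij}\Phi_{ij}=2$, while in the second term $\Phi^{ij}\Phi_{ijk}\Phi^k = (\Phi^{ij}\Phi_{ijk})\Phi^k = -\Phi_k\Phi^k = -|\nabla\Phi|^2$ by (T1), so ${\rm Tr}(\nabla^2\Phi) = 2 + |\nabla\Phi|^2/2$. Equivalently, $\Delta_h\Phi = L\Phi + \tfrac12\Phi^k\Phi_k$ and $L\Phi = \Phi^{ij}\Phi_{ij}=2$.

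For (HS), squaring the expression for $(\nabla^2\Phi)_{ij}$ and contracting it with itself yields four terms. The leading one is again $\Phi^{ij}\Phi_{ij}=2$; each of the two cross terms collapses by (T1) to $+\tfrac12|\nabla\Phi|^2$, since $\Phi_{ij}\Phi^{ij}{}_\ell = \Phi^{ab}\Phi_{ab\ell} = -\Phi_\ell$ and likewise $\Phi_{ijk}\Phi^{ij} = -\Phi_k$. The only genuinely new piece is the last term $\tfrac14(\Phi_{ijk}\Phi^k)(\Phi^{ij}{}_\ell\Phi^\ell)$; writing $\Phi_{ijk}\Phi^k = \Phi_{ijn}|\nabla\Phi|$ it equals $\tfrac14|\nabla\Phi|^2\,\Phi_{ijn}\Phi^{ij}{}_n$, and one recognizes $\Phi_{ijn}\Phi^{ij}{}_n = g(n,n)$ — either by renaming indices and using the full symmetry of $\Phi_{ijk}$ in the definition $g(n,n) = \Phi_{nab}\Phi_n{}^{ab}$, or by passing to the orthonormal frame $(n,v)$, where both quantities become the sum of squares $\Phi_{nnn}^2 + 2\Phi_{nnv}^2 + \Phi_{nvv}^2$. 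Collecting the terms gives $\|\nabla^2\Phi\|^2 = 2 + |\nabla\Phi|^2 + \tfrac14 g(n,n)|\nabla\Phi|^2$.

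For (D), the only input beyond (Tr) and (HS) is the elementary fact that a self-adjoint endomorphism $A$ of a two-dimensional inner-product space satisfies $\det A = \tfrac12\bigl[({\rm Tr}\,A)^2 - \|A\|^2\bigr]$ (Cayley--Hamilton); applying it to $A = \nabla^2\Phi$ reduces (D) to a one-line substitution, after which the $|\nabla\Phi|^2$-terms combine into $1 + |\nabla\Phi|^2/2 + \tfrac18|\nabla\Phi|^2\bigl(|\nabla\Phi|^2 - g(n,n)\bigr)$. The only step requiring any care — the main obstacle, such as it is — is the index bookkeeping in the last term of (HS): one must check that $\Phi_{ijn}\Phi^{ij}{}_n$ is exactly the contraction defining $g(n,n)$ and not a larger sum; everything else is a direct insertion of the Hessian formula~(2) together with the trace identity (T1).
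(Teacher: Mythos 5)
Your proposal is correct and follows exactly the route the paper intends: the paper states this lemma without proof (``Similar computations show''), and the intended computation is precisely your direct contraction of $(\nabla^2\Phi)_{ij}=\Phi_{ij}-\tfrac12\Phi_{ijk}\Phi^k$ with itself and with $\Phi^{ij}$, using (T1), the identification $\Phi_{ijn}\Phi^{ij}{}_n=g(n,n)$, and the two-dimensional identity $\det A=\tfrac12[({\rm Tr}\,A)^2-\|A\|^2]$. No gaps; the index bookkeeping in the $g(n,n)$ term is handled correctly.
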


         Finally, the following results for Laplacians follow from Lemma \ref{lphi} and Lemma \ref{lgij}.
   \begin{itemize}
   \item[(L1)]
   \begin{align*}
     L |\nabla \Phi|^2 & = L( \Phi^i \Phi_i) = 2 L(\Phi_i) \Phi^i + 2 \nabla_p \Phi_i \nabla^p \Phi^i = 2 L(\Phi_i) \Phi^i + 2  \| \nabla^2 \Phi \|^2
     \\ & =   |\nabla \Phi|^2 +  \frac{1}{2} g(n,n)|\nabla \Phi|^2  +2 \|\nabla^2 \Phi\|^2
     = 4 + 3  |\nabla \Phi|^2  +  g(n,n)|\nabla \Phi|^2.
   \end{align*}

   \item[(L2)]
   $$
L({\rm Tr} g) = {\rm Tr} g + \frac{1}{2}  \| g \|^2 + 8 {\rm{Riem}}_{abcd} {\rm{Riem}}^{abcd} + 2 \nabla_p \Phi_{abc} \nabla^p \Phi^{abc},
   $$
   where $\| g \|$ is the Hilbert-Schmidt norm of the tensor $g$.
   \end{itemize}

  \section{Main result}
In this section
$$
n=2.
$$

    \begin{theorem}
    \label{mainth}
    Let $\lambda = 4 R$, where $R$ is the sectional curvature of the Hessian metric $D^2 \Phi$.
    Then
  \begin{multline*}
    2 L (\lambda )
=(3 \lambda-1)(\lambda+1) + \frac{|\nabla \Phi|^2}{8 \lambda + |\nabla \Phi|^2}  (3\lambda -1)^2
+ \frac{ 16|\nabla \lambda|^2
+ 2\bigl[4 (1- \lambda) +  |\nabla \Phi|^2 \bigr] \langle \nabla \Phi, \nabla \lambda \rangle}{8 \lambda + |\nabla \Phi|^2} .
    \end{multline*}
     \end{theorem}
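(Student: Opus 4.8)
The plan is to reduce $L\lambda$ to an algebraic expression in $\lambda$, $|\nabla\Phi|^2$ and $\nabla\lambda$ by feeding the Bochner-type formulas (L1), (L2) into the identity (T3), and then to use heavily that in dimension $n=2$ the tensors involved have very few independent components.

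\textbf{Step 1 (from $L({\rm Tr}\, g)$ and $L|\nabla\Phi|^2$ to $L\lambda$).} By (T3) we have $2\lambda = {\rm Tr}\, g - |\nabla\Phi|^2$, hence $2L\lambda = L({\rm Tr}\, g) - L|\nabla\Phi|^2$. Inserting (L2) for $L({\rm Tr}\, g)$, (L1) for $L|\nabla\Phi|^2$, the identity (\ref{riem2}) which turns $8\,{\rm Riem}_{abcd}{\rm Riem}^{abcd}$ into $2\lambda^2$, and once more (T3) in the form ${\rm Tr}\, g = 2\lambda + |\nabla\Phi|^2$, one obtains
\[
2L\lambda = 2\lambda + 2\lambda^2 - 4 - 2|\nabla\Phi|^2 + \tfrac12\|g\|^2 - g(n,n)|\nabla\Phi|^2 + 2\nabla_p\Phi_{abc}\nabla^p\Phi^{abc}.
\]
It remains to evaluate the third-order scalars $g(n,n),\|g\|^2$ and the genuinely fourth-order term $\nabla_p\Phi_{abc}\nabla^p\Phi^{abc}$.

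\textbf{Step 2 (the third-order part).} Work in the orthonormal frame $(n,v)$ and set $a=\Phi_{nnn}$, $b=\Phi_{nnv}$, $c=\Phi_{vvn}$, $d=\Phi_{vvv}$; the pointwise identity (T1) gives $a+c=-|\nabla\Phi|$ and $d=-b$, while $\lambda = g(v,v)+\Phi_{vvn}|\nabla\Phi| = 2b^2+2c^2+c|\nabla\Phi|$. With $u:=\Phi_{vvn}|\nabla\Phi|$ a short computation yields
\[
g(n,n) = \lambda + |\nabla\Phi|^2 + u,\qquad g(v,v) = \lambda - u,\qquad g(n,v) = -\,\Phi_{nnv}|\nabla\Phi|,
\]
whence $\|g\|^2 = g(n,n)^2 + 2g(n,v)^2 + g(v,v)^2 = 2\lambda^2 + 3\lambda|\nabla\Phi|^2 + |\nabla\Phi|^4 + u|\nabla\Phi|^2$, the $u^2$ terms cancelling. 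Substituting into Step 1 leaves $2L\lambda$ as an explicit polynomial in $\lambda,|\nabla\Phi|^2,u$ plus the fourth-order term.

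\textbf{Step 3 (the fourth-order part).} The cubic form $\Phi_{abc}$ obeys the Codazzi identity, so $T_{pabc}:=\nabla_p\Phi_{abc}$ is a totally symmetric $4$-tensor; in $n=2$ it has five components. By (T2) its full trace equals $-\nabla^2\Phi$, whose frame entries $(\nabla^2\Phi)_{nn}=1+\tfrac12(|\nabla\Phi|^2+u)$, $(\nabla^2\Phi)_{vv}=1-\tfrac12 u$, $(\nabla^2\Phi)_{nv}=-\tfrac12\Phi_{nnv}|\nabla\Phi|$ are already known (e.g.\ from the Hessian-of-$f$ formula applied to $\Phi$ together with (T1)); this pins down three of the five components. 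The remaining two, say $\xi=T_{nnvv}$ and $\eta=T_{nnnv}$, are recovered from (T4), which expresses $T_{pabc}\Phi^{abc}$ ($p=n,v$) through $\nabla_p\lambda$; written out in the frame this becomes a linear system
\[
\begin{pmatrix}\kappa & 4\Phi_{nnv}\\ 4\Phi_{nnv} & -\kappa\end{pmatrix}\begin{pmatrix}\xi\\ \eta\end{pmatrix} = \begin{pmatrix}\nabla_n\lambda\\ \nabla_v\lambda\end{pmatrix} + (\text{explicit terms in }\lambda,|\nabla\Phi|^2,u),\qquad \kappa:=4\Phi_{vvn}+|\nabla\Phi|,
\]
whose determinant is $-(\kappa^2+16\Phi_{nnv}^2) = -(8\lambda+|\nabla\Phi|^2)$ by the relation $\lambda=2\Phi_{nnv}^2+2\Phi_{vvn}^2+\Phi_{vvn}|\nabla\Phi|$ — this is exactly the origin of the denominator $8\lambda+|\nabla\Phi|^2$. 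Solving for $(\xi,\eta)$, forming $\nabla_p\Phi_{abc}\nabla^p\Phi^{abc}=\sum_{p,a,b,c}T_{pabc}^2$ with the multiplicities of a symmetric tensor, and simplifying expresses the fourth-order term through $|\nabla\lambda|^2$, $\langle\nabla\Phi,\nabla\lambda\rangle=|\nabla\Phi|\,\nabla_n\lambda$, and $\lambda,|\nabla\Phi|^2,u$.

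\textbf{Step 4 (assembling).} Adding the outcomes of Steps 2 and 3, all $u$-dependence cancels and $\nabla_v\lambda$ survives only through $|\nabla\lambda|^2$, leaving exactly the stated right-hand side; the particular grouping $(3\lambda-1)(\lambda+1)+\frac{|\nabla\Phi|^2}{8\lambda+|\nabla\Phi|^2}(3\lambda-1)^2$ of the lower-order part is tailored for the subsequent maximum-principle argument. The only real difficulty is the final bookkeeping: keeping straight the two normalizations ($\Phi_{vvn}|\nabla\Phi|$ versus the unit-frame $\Phi_{vvn}$), the symmetric-tensor multiplicities in $\nabla_p\Phi_{abc}\nabla^p\Phi^{abc}$, the correct set-up and inversion of the $2\times2$ system, and verifying the resulting polynomial identity in $(\lambda,|\nabla\Phi|^2,u)$ in which every occurrence of $u$ must drop out.
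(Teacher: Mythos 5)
Your proposal follows essentially the same route as the paper's proof: write $2L\lambda = L(\mathrm{Tr}\, g) - L|\nabla \Phi|^2$ via (T3), (L1), (L2) and (\ref{riem2}), and then determine the five components of the totally symmetric tensor $\nabla_p \Phi_{abc}$ from the differentiated trace identities (T2) and (T4), which collapse to a $2\times 2$ linear system whose determinant is the source of the denominator $8\lambda + |\nabla \Phi|^2$. The only substantive difference is the frame: you work in $(n,v)$, whereas the paper diagonalizes $\nabla^2\Phi$ and works in its eigenframe $(e,u)$, where the system (Lemmas \ref{QeeuuQeeeu} and \ref{QABCD}) is cleaner because $(\nabla^2\Phi)_{eu}=0$, but the cross terms must then be converted into $\langle \nabla \Phi, \nabla \lambda\rangle$ via the extra identities of Lemma \ref{17.03}; in your frame that conversion is immediate, since $\langle \nabla \Phi, \nabla \lambda \rangle = |\nabla \Phi|\, \nabla_n \lambda$, at the price of carrying the off-diagonal entry $(\nabla^2\Phi)_{nv}$ through the trace relations. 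Everything you actually compute checks out: the frame values of $g$ and $\nabla^2\Phi$, the formula $\|g\|^2 = 2\lambda^2 + 3\lambda|\nabla\Phi|^2 + |\nabla\Phi|^4 + u|\nabla\Phi|^2$, the system matrix $\bigl(\begin{smallmatrix} \kappa & 4\Phi_{nnv} \\ 4\Phi_{nnv} & -\kappa \end{smallmatrix}\bigr)$, and in particular $\kappa^2 + 16\Phi_{nnv}^2 = 8\lambda + |\nabla\Phi|^2$, which is the exact analogue of the paper's $C^2+D^2 = 8\lambda+|\nabla\Phi|^2$. The one caveat is that your Step 4 asserts rather than verifies the final cancellation (of $u$, of odd powers of $\Phi_{nnv}$, and of the residual third-derivative terms) and the precise coefficients of $|\nabla\lambda|^2$ and $\langle\nabla\Phi,\nabla\lambda\rangle$; that bookkeeping is exactly where the bulk of the paper's proof lives (the tail of Lemma \ref{llambda}, Lemma \ref{17.03}, and Lemma \ref{longexp}), so to be a complete proof it must still be carried out, though no new idea is needed for it.
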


We postpone the proof of Theorem \ref{mainth} for now, and present its following corollary:

 \begin{corollary}
The sectional curvature function $R$ satisfies
    $$
    R \le \frac{1}{12}.
    $$
      \end{corollary}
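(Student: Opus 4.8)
The plan is to apply the maximum principle to the function $\lambda = 4R$ using the identity in Theorem~\ref{mainth}. Since the Hessian manifold $(\RR^n, D^2\Phi)$ is complete and $\lambda$ is smooth, I would first argue that $\lambda$ attains its supremum; if it is not attained one passes to a sequence of almost-maximizers and uses the Omori–Yau maximum principle, so morally we may assume there is a point $p$ where $\lambda$ is maximal. At such a point $\nabla \lambda = 0$ and $L\lambda \le 0$ (the weighted Laplacian of a function at an interior maximum is non-positive, since $L = \Phi^{ij}\nabla_i\nabla_j - \tfrac12 \Phi^k \nabla_k$ and the first-order term drops out). Plugging $\nabla\lambda = 0$ into the formula of Theorem~\ref{mainth} kills the entire last fraction, leaving
\[
0 \;\ge\; 2 L(\lambda)\big|_p \;=\; (3\lambda-1)(\lambda+1) + \frac{|\nabla\Phi|^2}{8\lambda + |\nabla\Phi|^2}\,(3\lambda-1)^2 .
\]

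The next step is to extract the bound from this inequality. Note that by Remark~\ref{below-bound} we have $8\lambda + |\nabla\Phi|^2 \ge 0$, and in fact one should check $8\lambda + |\nabla\Phi|^2 > 0$ wherever $|\nabla\Phi| > 0$ (and at the origin $|\nabla\Phi| = 0$, where the formula degenerates but $\lambda$ can be computed directly, or one observes the origin need not be the maximum). Hence the coefficient $|\nabla\Phi|^2/(8\lambda+|\nabla\Phi|^2)$ is non-negative, so the second summand $\tfrac{|\nabla\Phi|^2}{8\lambda+|\nabla\Phi|^2}(3\lambda-1)^2 \ge 0$ is automatically non-negative. Therefore the inequality forces
\[
(3\lambda-1)(\lambda+1) \;\le\; 0 .
\]
Since $\lambda = 4R \ge -|\nabla\Phi|^2/2 > -1$ near generic points — or more robustly, since $\lambda+1 > 0$ can be verified from Remark~\ref{below-bound} together with the fact that $|\nabla\Phi|^2$ stays bounded appropriately — the factor $\lambda+1$ is positive, so we conclude $3\lambda - 1 \le 0$, i.e. $\lambda \le 1/3$, which is $R = \lambda/4 \le 1/12$. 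Because this holds at the maximum of $\lambda$, it holds everywhere.

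The main obstacle I anticipate is the analytic justification of the maximum-principle step: $\RR^n$ is non-compact, so one must either establish completeness of the Hessian metric and suitable curvature/Ricci lower bounds to invoke the Omori–Yau maximum principle for the operator $L$, or exhibit enough control on $\lambda$ at infinity (for instance via the behavior of $\Phi$ near $\partial K$) to guarantee the supremum is attained. A secondary technical point is handling the locus where $8\lambda + |\nabla\Phi|^2 = 0$ or $|\nabla\Phi| = 0$, where the identity of Theorem~\ref{mainth} is singular; one should check that the supremum of $\lambda$ cannot be forced to occur only there, or treat those points by a separate direct argument. Once these points are settled, the algebra above is immediate: the sign of $(3\lambda-1)(\lambda+1)$ together with positivity of the correction term does all the work.
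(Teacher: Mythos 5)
Your differential-inequality mechanism is the right one, and it is essentially the paper's Step 1: at an interior maximum the gradient terms in Theorem \ref{mainth} drop out and the remaining algebra forces $\lambda\le 1/3$ (indeed, if the maximal value exceeded $1/3$ then $3\lambda-1>0$, $\lambda+1>0$ and $8\lambda+|\nabla\Phi|^2\ge 8\lambda>0$ there, so both summands would be strictly positive, contradicting $L\lambda\le 0$; this also disposes of your worries about $\lambda+1$ and about the denominator at such a point). The genuine gap is precisely the issue you flag and then wave through: the existence of a maximum, or of a usable substitute. Neither of your proposed remedies is available. The Hessian metric is in general \emph{incomplete}: for $K=B^n$ the paper shows $(\RR^n,D^2\Phi)$ is an open Riemannian ball of finite radius $D_n$, and in that same example the Ricci curvature is \emph{not} bounded below. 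So the Omori--Yau maximum principle (which needs completeness and a Ricci lower bound) cannot be invoked, and there is no a priori control of $\lambda$ at infinity for a general convex body. Justifying the maximum-principle step is not a technical afterthought here; it is the bulk of the paper's proof of this corollary.

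The paper's route around this is worth noting, because it is what your proposal is missing. First, it does not maximize $\lambda$ itself but $\lambda e^{-\varepsilon\Phi}$, which has a maximum as soon as the mild growth condition $\limsup_{x\to\infty}\lambda(x)e^{-\varepsilon\Phi(x)}\le 0$ holds; running your computation with the extra $\varepsilon$-terms gives $\lambda\le\max\bigl(\tfrac{1+4\varepsilon}{3},\tfrac{1}{3-\varepsilon}\bigr)$ and one lets $\varepsilon\to 0$. Second, that growth condition is verified only for \emph{rational polytopes}, via the nontrivial fact (Legendre) that the associated toric K\"ahler--Einstein metric completes to a smooth compact orbifold, so $\lambda$ is bounded. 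Third, a general body $K$ is treated by approximating with rational polytopes $K_m$: one has $D^2\Phi_m\to D^2\Phi$ locally uniformly (using the third-derivative estimates of the Appendix), but since curvature involves fourth derivatives it need not converge pointwise, so the bound $R_m\le 1/12$ is transferred to the limit by a Rauch comparison/area-of-geodesic-balls argument rather than by passing to the limit in the curvatures. Without these two steps (or some genuinely new argument producing a maximum, e.g. an a priori bound on $\lambda$ for arbitrary $K$), your proof does not close.
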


      \begin{proof} The idea is to apply Theorem \ref{mainth} and the maximum principle. We proceed as follows:

    {\bf Step 1.}       Assume first that  for any $\varepsilon>0$
    \begin{equation}
          \limsup_{x \to \infty} \lambda(x)  e^{-\varepsilon \Phi(x)} \le 0.
           \label{eq_300}
    \end{equation}
           In particular, $\lambda e^{-\varepsilon \Phi}$ has a local maximum for every $\varepsilon>0$.
        Applying the maximum principle to this function we get at the maximum point
        $$
         \frac{\nabla \lambda}{\lambda} = \varepsilon  \nabla \Phi,
           $$
           $$
           \frac{L \lambda}{\lambda} - \Bigl| \frac{\nabla \lambda}{\lambda}\Bigr|^2 \le 2 \varepsilon
           $$
     (we use here the relation $L \Phi=2$).
       By theorem \ref{mainth}
       $$
           2 L (\lambda )
=\lambda (3 \lambda-1) \Bigl[1  +  \frac{3 |\nabla \Phi|^2 + 8}{8\lambda + |\nabla \Phi|^2} \Bigr]
+ \frac{ 16|\nabla \lambda|^2
+ 2\bigl[4 (1- \lambda) +  |\nabla \Phi|^2 \bigr] \langle \nabla \Phi, \nabla \lambda \rangle}{8 \lambda + |\nabla \Phi|^2} .
    $$
    Substituting the above relations into this formula one gets that at any local maximum point the following inequality holds
       $$
           (3 \lambda-1) \Bigl[\frac{1}{2} +  \frac{3 |\nabla \Phi|^2 + 8}{16\lambda +2 |\nabla \Phi|^2} \Bigr]
+ \frac{ 16 \varepsilon^2 \lambda |\nabla \Phi|^2
+ 2 \varepsilon \bigl[4 (1- \lambda) +  |\nabla \Phi|^2 \bigr]  |\nabla \Phi|^2}{16 \lambda + 2|\nabla \Phi|^2} \le 2 \varepsilon + \varepsilon^2 |\nabla \Phi|^2.
    $$
    Rearranging the terms one gets
    $$
     (3 \lambda-1) \Bigl[\frac{1}{2}  +  \frac{3 |\nabla \Phi|^2 + 8}{16\lambda + 2|\nabla \Phi|^2} \Bigr]
+ \frac{
 8 \varepsilon (1- \lambda) +  2\varepsilon (1- \varepsilon) |\nabla \Phi|^4}{16 \lambda + 2|\nabla \Phi|^2} \le 2 \varepsilon.
    $$

Equivalently,
\begin{equation}    \frac{3 \lambda - 1}{2}  - 2 \varepsilon  + \frac{ [3 (3 \lambda -1) + 2 \varepsilon (1 - \varepsilon)] |\nabla \Phi|^2 + 8 [ (3 - \varepsilon) \lambda-1] + 8 \varepsilon  }{16 \lambda + 2 |\nabla \Phi|^2} \le 0. \label{eq_515} \end{equation}
We may assume that $\varepsilon \le  1$, as later $\varepsilon$ would tend to zero. We thus deduce from
(\ref{eq_515}) that
$$
 \frac{3 \lambda - 1 - 4 \varepsilon}{2}  + \frac{ 3 (3 \lambda -1)  |\nabla \Phi|^2 + 8 [ (3 - \varepsilon) \lambda-1] + 8 \varepsilon  }{16 \lambda + 2 |\nabla \Phi|^2} \le 0. $$
  It is therefore impossible that both $3 \lambda - 1 \geq 4 \varepsilon$ and $(3 - \varepsilon) \lambda \geq 1$.
    Hence
    $$
    \lambda \le \max \Bigl( \frac{1+ 4 \varepsilon}{3},  \frac{1}{3 - \varepsilon} \Bigr),
    $$
     at any local maximum of $\lambda e^{-\varepsilon \Phi}$. By the maximum principle, we have that everywhere in $\RR^n$,
    \begin{equation}
    \label{eq_529}
    \lambda e^{-\varepsilon  \Phi} \le \max \Bigl( \frac{1+ 4 \varepsilon}{3},  \frac{1}{3 - \varepsilon} \Bigr) e^{- \varepsilon \min \Phi},
    \end{equation}
    Tending $\varepsilon$ to zero we get $\lambda \le \frac{1}{3}$, hence $R \le \frac{1}{12}$.

\medskip    {\bf Step 2. Rational polytopes.}

    Assume that the convex body $K \subseteq \RR^n$ is a rational polytope, in the sense that all  the coordinates  of all of the vertices of $K$
     are rational numbers. It is known that in this case,
    \begin{equation}  \sup_{x \in \RR^n} |\lambda(x)| < \infty, \label{eq_1003}
    \end{equation}
    and in particular (\ref{eq_300}) holds true. In fact, in this case, one may consider the Riemannian metric on
    the complex torus $\CC^n / (2 \pi \sqrt{-1} \cdot \ZZ^n)$ given by
    \begin{equation}  \tilde{h}(x,y) = \Phi_{ij}(x) d x^i dx^j + \Phi_{ij}(x) d y^i d y^j \label{eq_1027}
    \end{equation}
    where $(x,y) = (x^1,\ldots,x^n, y^1,\ldots, y^n) \in \RR^n \times \RR^n \cong \CC^n$ are the standard coordinates, and where the $1$-forms $dx^1,\ldots, dx^n, d y^1,\ldots, dy^n$
    are well-defined on $\CC^n$ and also on the quotient $\CC^n / (2 \pi \sqrt{-1} \cdot \ZZ^n)$. The Riemannian metric $\tilde{h}$ is in fact a K\"ahler metric on the complex manifold
    $\CC^n / (2 \pi \sqrt{-1} \cdot \ZZ^n)$.  The equation (\ref{KE}) is equivalent to the assertion that the metric $\tilde{h}$ is an Einstein metric. Thus, up to translation, $\Phi$ is the unique function on $\RR^n$ satisfying (\ref{eq_1026}) for which the metric $\tilde{h}$ as defined in (\ref{eq_1027}) is an Einstein metric.

    \smallskip This toric K\"ahler-Einstein metric $\tilde{h}$
    was studied extensively in the complex geometry literature. We refer the reader to Legendre \cite{Legendre} and
    references therein for the following non-trivial fact:
    When $K$ is a rational polytope, the metric $\tilde{h}$ admits a completion which is a smooth, compact orbifold. In particular all sectional curvatures of $\tilde{h}$
    are uniformly bounded on $\CC^n / (2 \pi \sqrt{-1} \cdot \ZZ^n)$. Since $R = \lambda/4$ is a sectional curvature of $\tilde{h}$, the proof of (\ref{eq_1003})
    is complete.

\medskip
     {\bf Step 3. Approximation by rational polytopes.}
     Assume that $K \subseteq \RR^n$ is a convex body with barycenter at the origin. Then there
     exist rational polytopes $K_1,K_2,\ldots$ with barycenters at the origin such that $K_m \longrightarrow K$
     in the Hausdorff metric. By solving (\ref{KE}) for $K_m$ we obtain a sequence of convex functions $\Phi_m$, and
     according to \cite[Proposition 2.1]{K_part_I} we may assume that
 \begin{equation}  \Phi_m \longrightarrow \Phi \label{eq_1130}
 \end{equation}
     locally uniformly in $\RR^n$. We claim that
 \begin{equation}  D^2 \Phi_m \longrightarrow D^2 \Phi \label{eq_1131}
 \end{equation}
     locally uniformly in $\RR^n$. Indeed, it follows from (\ref{eq_1130}) that if a subsequence of $D^2 \Phi_m$
     converges locally uniformly to a certain limit, then this limit must equal $D^2 \Phi$. Thus it suffices to show that the sequence $D^2 \Phi_m$
     is precompact in local uniform norm. This follows from the Arzela-Ascoli theorem and the local uniform
     bound on the third derivatives (see Corollary \ref{three-est}). Thus (\ref{eq_1131}) is proven.

     \smallskip For a point $x \in \RR^n$
     and $r > 0$ write $B_m(x, r)$ for the collection of all points whose Riemannian distance from $x$, with respect to the metric induced by $D^2 \Phi_m$,
     is at most $r$. We set $B(x,r)$ for the corresponding ball with respect to the metric induced by $D^2 \Phi$.
     It follows from (\ref{eq_1131}) that for any $x \in \RR^n$ and $r_0 > 0$ for which $B(x, r_0) \subseteq \RR^n$ is compact,
     \begin{equation}  \int_{B_m(x,r_0)} \sqrt{ \det D^2 \Phi_m} \ dx \stackrel{m \rightarrow \infty}\longrightarrow \int_{B(x,r_0)} \sqrt{ \det D^2 \Phi} \ dx.
     \label{eq_1158} \end{equation}
     Thanks to the previous step, we know that the sectional curvature $R_m$ of the metric induced by $D^2 \Phi_m$ satisfies
     $R_m(x) \leq 1/12$ for any $x \in \RR^n$ and $m \geq 1$. Fix $x \in \RR^n$ and $r_0 > 0$ such that $B(x,r_0)$ is compact
     with a unique geodesic connecting any point $y \in B(x, r_0)$ to $x$. Then for a sufficiently large $m$, also $B_m(x,r_0)$
    is compact with unique geodesics to the center $x$,  with respect to the metric induced by $D^2 \Phi_m$.
By the Rauch comparison theorem (e.g., \cite[Section 6.5]{BBI}), for any $0 < r < r_0$ there is a metric-contraction from the Riemannian ball $B_m(x, r)$ onto the geodesic ball of radius $r$ in a two-dimensional sphere of radius $\sqrt{12}$.
    Consequently, we may compare areas and obtain the inequality
     $$ \int_{B_m(x,r)} \sqrt{ \det D^2 \Phi_m} \ dx \geq 24 \pi \cdot (1 - \cos(r / \sqrt{12}) ), $$
     where the right-hand side is the area of the geodesic ball in the sphere. It thus follows from (\ref{eq_1158}) that for all $0 < r < r_0$,
     \begin{equation}  \frac{12}{\pi r^4} \left( \pi r^2 - \int_{B(x,r)} \sqrt{ \det D^2 \Phi} \right) dx \leq \frac{12}{\pi r^4} \left( \pi r^2 - 24 \pi \cdot (1 - \cos(r / \sqrt{12}) ) \right). \label{eq_1218} \end{equation}
     When $r \rightarrow 0^+$, the right-hand side of (\ref{eq_1218}) tends to $1/12$, while the left hand-side tends to the sectional curvature $R(x)$.
     Therefore $R \leq 1/12$.
      \end{proof}

      \begin{remark}
      The fact that $R = \frac{1}{4} (\Phi_{abc} \Phi^{abc} - |\nabla \Phi|^2)$ is bounded from above
      looks highly non-trivial. It can be shown that the quantity   $\Phi_{abc} \Phi^{abc}$ tends to infinity
      and, moreover, has exponential growth. We show in the last section that in general $R$ is not bounded from below.
      \end{remark}

                    \begin{corollary}
        The function $\Phi$ is uniformly convex in Riemannian metric, more precisely
        $$\nabla^2 \Phi \ge \frac{5}{6}.
        $$
    In particular, the manifold $(\mathbb{R}^n, D^2 \Phi)$
    is geodesically convex.
            \end{corollary}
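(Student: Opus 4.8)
The plan is to read off the Riemannian Hessian $\nabla^2\Phi$ of $\Phi$ in the metric $h=D^2\Phi$ from the formulas of Section~2, reduce the claimed bound to the estimate $R\le 1/12$ just proved, and then obtain geodesic convexity from the strict convexity and properness of $\Phi$. Concretely, I would specialize the formula for the Riemannian Hessian of a function (item (2) in the list of Section~2) to $f=\Phi$, getting
$$ \nabla^2\Phi_{ij}=\Phi_{ij}-\tfrac12\,\Phi^k_{ij}\Phi_k=\Phi_{ij}-\tfrac12\,\Phi_{ijk}\Phi^{k}. $$
By (\ref{eq_957}) (together with $\Phi_{ijn}|\nabla\Phi|=\Phi_{ijk}\Phi^k$) we have $\Phi_{ijk}\Phi^k=\lambda\Phi_{ij}-g_{ij}$, so
$$ \nabla^2\Phi_{ij}=\Bigl(1-\tfrac{\lambda}{2}\Bigr)\Phi_{ij}+\tfrac12\,g_{ij}. $$
Since $g$ is a non-negative symmetric tensor, $\nabla^2\Phi_{ij}\ge\bigl(1-\tfrac{\lambda}{2}\bigr)\Phi_{ij}$; and as $\lambda=4R\le\tfrac13$ by the corollary above, $1-\tfrac{\lambda}{2}\ge\tfrac56$, whence $\nabla^2\Phi\ge\tfrac56\,h$. (The residual term $\tfrac12 g$ shows this need not be sharp, but $\tfrac56$ is all that is claimed.) This is the uniform convexity of $\Phi$ in the Hessian metric.

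For the last assertion I would use that a proper, smooth, strictly geodesically convex function forces geodesic convexity of the underlying manifold. Here $\Phi$ is proper: since $\nabla\Phi(\RR^n)=K$ is bounded and contains the origin in its interior, $\Phi$ grows at least linearly at infinity, so every sublevel set $C_R=\{\Phi\le R\}$ is compact and convex. Given $p,q\in\RR^n$, choose $R$ so large that $p,q$ lie in the interior of $C_R$. By Arzel\`a--Ascoli for constant-speed parametrizations together with lower semicontinuity of length, there is a shortest curve $\gamma$ joining $p$ to $q$ among curves contained in $C_R$. Wherever $\gamma$ lies in the interior of $C_R$ it has no corner and is a geodesic of $(\RR^n,D^2\Phi)$: otherwise a short piece could be replaced by a geodesic arc of the metric, which stays in the interior of $C_R$ by convexity of $\Phi$ and is strictly shorter, contradicting minimality. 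Moreover $\gamma$ never touches $\partial C_R$: an isolated contact point would make $\Phi\circ\gamma$ convex near it with a local maximum there, hence locally constant, contradicting $\nabla^2\Phi\ge\tfrac56 h>0$; a whole contact arc could be replaced by the geodesic arc of the metric between its endpoints (no longer, and interior by convexity of $\Phi$; a trivial loop would just be deleted), so minimality would make $\gamma$ a geodesic along it — impossible since $\Phi\circ\gamma$ is constant there. Thus $\gamma$ is a geodesic of $(\RR^n,D^2\Phi)$ lying in the interior of $C_R$, $\Phi\circ\gamma$ is convex, and $\max_\gamma\Phi\le\max(\Phi(p),\Phi(q))=:R_0$. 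Since for every $R\ge R_0$ the curve so produced already lies in the fixed compact set $C_{R_0}$, its length is independent of $R$ and therefore equals $d_h(p,q)$; hence $\gamma$ is a minimizing geodesic from $p$ to $q$, and $(\RR^n,D^2\Phi)$ is geodesically convex.

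The computational part is routine once the curvature bound $R\le 1/12$ is in hand; I expect the only delicate point to be the variational argument of the second paragraph — keeping a length-minimizing sequence inside a fixed compact sublevel set of $\Phi$ and excluding contact with its boundary — both of which rely solely on strict convexity of $\Phi$ along geodesics. It is worth stressing that the argument does \emph{not}, and cannot, use geodesic completeness of $(\RR^n,D^2\Phi)$: completeness already fails for the simplex, where the metric is isometric to an incomplete open spherical orthant, yet geodesic convexity still holds.
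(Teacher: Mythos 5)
Your computation of the Riemannian Hessian is exactly the paper's: both write $(\nabla^2 \Phi)_{ij}=\Phi_{ij}-\tfrac12 \Phi_{ijk}\Phi^k=\tfrac12 g_{ij}+\bigl(1-\tfrac{\lambda}{2}\bigr)\Phi_{ij}$ and combine $g\ge 0$ with $\lambda\le \tfrac13$ from the curvature corollary to get the constant $\tfrac56$. For the second assertion the paper stops after observing that $\Phi$ is a globally defined function, convex along geodesics, with $\Phi(x)\to+\infty$, and invokes this as a known criterion for geodesic convexity; you instead give a self-contained proof by constrained length minimization in the compact sublevel sets $C_R=\{\Phi\le R\}$. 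That route is legitimate, and your closing remark that one cannot argue via completeness (which already fails for the simplex) is exactly the right caveat.

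There is, however, a soft spot in your boundary-exclusion step. At an isolated contact time $t_0$ you assert that $\Phi\circ\gamma$ is convex near $t_0$; but at that stage $\gamma$ is only known to be a geodesic on either side of $t_0$, so $\Phi\circ\gamma$ is convex on each side separately, and a corner at $t_0$ (with $\Phi\circ\gamma$ increasing, then decreasing) is not yet excluded --- convexity \emph{across} $t_0$ is precisely what is missing. Likewise, for a contact arc you replace it by ``the geodesic arc of the metric between its endpoints,'' but those endpoints may be far apart, and the existence of (minimizing) geodesics between distant points is exactly what is being proved; moreover convexity only places that arc in $C_R$, not in its interior. Both defects are repaired by localizing the replacement: pick $t_1<t_2$ so close that $\gamma(t_1),\gamma(t_2)$ lie in a totally normal neighbourhood (of uniform size on the compact set $C_R$), and let $\sigma$ be the local minimizing geodesic joining them; convexity of $\Phi$ along $\sigma$ gives $\Phi\le\max\bigl(\Phi(\gamma(t_1)),\Phi(\gamma(t_2))\bigr)\le R$ on $\sigma$, so $\sigma$ is admissible, and minimality of $\gamma$ forces $\mathrm{length}(\sigma)=\mathrm{length}(\gamma|_{[t_1,t_2]})$, hence $\gamma|_{[t_1,t_2]}$ is itself a minimizing geodesic. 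If $t_1,t_2$ straddle an isolated contact point and $\Phi(\gamma(t_i))<R$, uniqueness of the local minimizer shows $\gamma|_{[t_1,t_2]}$ coincides with $\sigma$, which avoids $\partial C_R$ --- contradiction; if $[t_1,t_2]$ lies inside a contact arc, then $\Phi\circ\gamma\equiv R$ along a nondegenerate geodesic segment, contradicting $\nabla^2\Phi\ge\tfrac56 h>0$. With this local replacement your argument closes; the remaining steps (existence of the constrained minimizer, interior regularity, and the monotonicity-in-$R$ identification of its length with $d_h(p,q)$) are correct as written.
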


\begin{proof}
$$
        (\nabla^2 \Phi)_{ij}   = \Phi_{ij}  - \frac{1}{2} \Phi_{ijk}\Phi^k =
\frac{1}{2} g_{ij} + \Phi_{ij}  - \frac{1}{2}\bigl( g_{ij} + \Phi_{ijk}\Phi^k  \bigr) =
\frac{1}{2} g_{ij} + \bigl(1 - \frac{\lambda}{2}\bigr) \Phi_{ij}  \ge \frac{5}{6} \Phi_{ij}.
$$
Thus in the manifold $(\mathbb{R}^n, D^2 \Phi)$ we have a global convex function $\Phi$ satisfying
$\lim_{x \rightarrow \infty} \Phi(x) = +\infty$. This is known to imply that the manifold is geodesically convex, that any two points have at least one geodesic connecting them.
 \end{proof}

We continue with the proof of Theorem \ref{mainth} and the required lemmata. The argument
involves linear algebra computations that are based on the differentiations described in the
previous section. We fix a point $x_0$ in $\RR^n$, and our goal is to
prove the formula for $2 L(\lambda)$ from Theorem \ref{mainth} at this point $x_0$.
The smooth function $\Phi$ is strongly convex, as $\det D^2 \Phi$ never vanishes. In particular,
the differential of $\Phi$ can vanish at most at one point. By continuity, in proving
Theorem \ref{mainth} we may assume that $\nabla \Phi(x_0) \neq 0$.

\smallskip We proceed with algebraic computations at the point $x_0$.
Let $(e,u)$ be an two tangent vectors at the point $x_0$, which constitute an orthonormal frame consisting of eigenvectors of $\nabla^2 \Phi$.
Let
$$
\Lambda(e), \Lambda(u)
$$
be the corresponding eigenvalues.
In particular,
$$
0 = \bigl(\nabla^2 \Phi\bigr)_{eu} = \Phi_{eu} - \frac{1}{2} \Phi_{eun} \Phi_n.
$$
Since $\Phi_{eu} = 0$ and $\Phi_n \neq 0$ then
$$
\Phi_{eun}=0.
$$

\begin{remark}
The tensor
      $$
      Q_{abcd} = \nabla_{a} \Phi_{bcd}
      $$
      is symmetric with respect to any permutation of coordinates.
      \end{remark}

     From (T2) we infer that the components of the tensor $Q$ do satisfy the following linear equations.
      \begin{equation}
      \label{Q1}
      Q_{eeee} + Q_{eeuu} = - \bigl(\nabla^2 \Phi\bigr)_{ee}  = - \Lambda(e)
      \end{equation}
      \begin{equation}
      \label{Q2}
      Q_{eeeu} + Q_{euuu} = -\bigl(\nabla^2 \Phi\bigr)_{eu} = 0
      \end{equation}
          \begin{equation}
          \label{Q3}
      Q_{eeuu} + Q_{uuuu} = -\bigl(\nabla^2 \Phi\bigr)_{uu} = - \Lambda(u)
      \end{equation}

   \begin{lemma}
   \label{llambda} At the point $x_0$, we have
   $$
    2 L (\lambda ) =
    2  \lambda + 3 \lambda^2 + \lambda |\nabla \Phi|^2 + 16 \bigl( Q^2_{eeuu} + Q^2_{eeeu}\bigr) + 2 Q_{eeuu} (4 + |\nabla \Phi|^2)
    $$
    \end{lemma}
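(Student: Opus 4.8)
The plan is to compute $2L(\lambda)$ directly from the definition $\lambda = \Phi_{abc}\Phi^{abc} - |\nabla\Phi|^2$ (identity (T3)), splitting $2L(\lambda) = L({\rm Tr}\, g) - L(|\nabla\Phi|^2)$. The second term is already given explicitly by (L1), so the heart of the matter is controlling $L({\rm Tr}\, g)$ via (L2):
$$
L({\rm Tr}\, g) = {\rm Tr}\, g + \tfrac12 \|g\|^2 + 8\,{\rm Riem}_{abcd}{\rm Riem}^{abcd} + 2\nabla_p\Phi_{abc}\nabla^p\Phi^{abc}.
$$
I would evaluate each of the four summands at $x_0$ in the orthonormal eigenframe $(e,u)$ of $\nabla^2\Phi$, expressing everything in terms of the tensor $Q_{abcd} = \nabla_a\Phi_{bcd}$ and the components $\Phi_{abc}$. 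Recall ${\rm Tr}\, g = \Phi_{abc}\Phi^{abc} = \lambda + |\nabla\Phi|^2$ by (T3), which takes care of the first summand, and ${\rm Riem}_{abcd}{\rm Riem}^{abcd} = \lambda^2/4$ by (\ref{riem2}), which takes care of the third.

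The two genuinely computational pieces are $\|g\|^2$ and $|\nabla Q|^2 := \nabla_p\Phi_{abc}\nabla^p\Phi^{abc}$. For $\|g\|^2$, I would use the two-dimensional curvature relation (\ref{eq_957}), which says $g_{ij} = \lambda\Phi_{ij} - \Phi_{ijn}|\nabla\Phi|$; squaring and using the trace/contraction identities from Section 2 (together with $\Phi_{eun} = 0$ established above) should express $\|g\|^2$ in terms of $\lambda$, $|\nabla\Phi|^2$, and the surviving components of $\Phi_{abc}$. For $|\nabla Q|^2$, the tensor $Q$ is totally symmetric (by the Remark), so in two dimensions it has only the independent components $Q_{eeee}, Q_{eeeu}, Q_{eeuu}, Q_{euuu}, Q_{uuuu}$; the linear relations (\ref{Q1})--(\ref{Q3}) coming from (T2) let me reduce these to a small number of parameters. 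The key extra input is (T4): the identity $(\nabla_p\Phi_{abc})\Phi^{abc} = (\nabla^2\Phi)_{pn}|\nabla\Phi| + \nabla_p\lambda$, which, contracted in the $n$ and $v$ directions, pins down the combinations of $Q$-components paired against $\Phi_{abc}$. One then has to combine the raw sum $Q_{abcd}Q^{abcd}$ with these constraints and with (\ref{Q1})--(\ref{Q3}) to collapse everything into the target expression $16(Q_{eeuu}^2 + Q_{eeeu}^2) + 2Q_{eeuu}(4 + |\nabla\Phi|^2)$ modulo the lower-order terms $2\lambda + 3\lambda^2 + \lambda|\nabla\Phi|^2$.

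I expect the main obstacle to be the bookkeeping in $|\nabla Q|^2$: one must carefully track which quadratic combinations of the five $Q$-components appear, use the three linear relations (\ref{Q1})--(\ref{Q3}) to eliminate $Q_{eeee}$, $Q_{euuu}$, $Q_{uuuu}$ in favor of $Q_{eeuu}$, $Q_{eeeu}$ and the eigenvalues $\Lambda(e), \Lambda(u)$, and then recognize that the eigenvalue-dependent pieces recombine — via (Tr), (HS) and (T3) — into the $\lambda$ and $|\nabla\Phi|^2$ terms. A secondary subtlety is that $\|g\|^2$ also produces such eigenvalue-dependent terms, and the cancellation against the $-L(|\nabla\Phi|^2)$ contribution and against the $|\nabla Q|^2$ contribution is what makes the final formula clean; I would organize the computation so that all terms not manifestly of the stated form are grouped and shown to cancel using the trace identities. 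Once Lemma \ref{llambda} is established in this frame-dependent form, the passage to the frame-independent statement of Theorem \ref{mainth} is a separate (and shorter) algebraic step, using $\lambda = g(n,n) + \Phi_{nnn}|\nabla\Phi|$ and the relation between $Q_{eeuu}$ and $\langle\nabla\Phi,\nabla\lambda\rangle$, $|\nabla\lambda|^2$.
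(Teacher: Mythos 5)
Your plan follows essentially the same route as the paper's proof: split $2L(\lambda)=L(\mathrm{Tr}\,g)-L(|\nabla\Phi|^2)$ via (T3), use (L1), (L2) and $\mathrm{Riem}_{abcd}\mathrm{Riem}^{abcd}=\lambda^2/4$, compute $\|g\|^2$ by squaring $g_{ij}=\lambda\Phi_{ij}-\Phi_{ijn}|\nabla\Phi|$, and collapse $\nabla_p\Phi_{abc}\nabla^p\Phi^{abc}$ using the linear relations (\ref{Q1})--(\ref{Q3}) together with (Tr) and (HS), exactly as in the paper. Two minor points: (T3) gives $\mathrm{Tr}\,g=2\lambda+|\nabla\Phi|^2$, not $\lambda+|\nabla\Phi|^2$, and (T4) is not actually needed for this lemma --- it only enters later, when $Q_{eeuu}$ and $Q_{eeeu}$ are solved for in Lemma \ref{QeeuuQeeeu}.
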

   \begin{proof}
   By (\ref{eq_957}) and  (T1), one gets
   \begin{equation}
   \label{trg2}
  \| g \|^2 =  \bigl \| \lambda \Phi_{ij} - \Phi_{ijn} |\nabla \Phi| \bigr \|^2 = 2 \lambda^2 + 2\lambda |\nabla \Phi|^2 + g(n,n)|\nabla \Phi|^2,
   \end{equation}
where $\| g \|^2$ is the square of the Hilbert-Schmidt norm of the tensor $g$.
   Then it follows from (L2), (\ref{riem2}) and  (\ref{trg2}).
   $$
   L({\rm Tr} g) =  {\rm Tr}(g) + 3 \lambda^2 + \lambda |\nabla \Phi|^2 + \frac{1}{2} g(n,n)|\nabla \Phi|^2 +  2 \nabla_p \Phi_{abc} \nabla^p \Phi^{abc}.
   $$
Then the identities    $2 L (\lambda ) = L(   {\rm Tr} g -  |\nabla \Phi|^2) $ (follows from (T3))  and  (L1) imply
\begin{equation}
\label{190517}
    2 L (\lambda ) =
   -4 + 2  \lambda + 3 \lambda^2 + (\lambda-2) |\nabla \Phi|^2 - \frac{1}{2} g(n,n)|\nabla \Phi|^2 +
   2 \nabla_p \Phi_{abc} \nabla^p \Phi^{abc}.
    \end{equation}
   Using (\ref{Q1})-(\ref{Q3})   we rewrite $
                                \nabla_p \Phi_{abc} \nabla^p \Phi^{abc}$ in terms of $Q$:
                                \begin{align*}
                                \nabla_p \Phi_{abc} \nabla^p \Phi^{abc} & = Q^2_{eeee} + 4 Q^2_{eeeu} + 6 Q^2_{eeuu} + 4 Q^2_{euuu} + Q^2_{uuuu}
                                \\& = ( Q_{eeuu} +  \Lambda(e))^2 + 8 Q^2_{eeeu} + 6 Q^2_{eeuu} +
                                                                ( Q_{eeuu} + \Lambda(u) )^2
                                \\& = 8 (Q^2_{eeuu} +    Q^2_{eeeu}) + 2 Q_{eeuu} {\rm Tr}(\nabla^2 \Phi)
                                 + \| \nabla^2 \Phi\|^2
                                \\&
                                = 8 (Q^2_{eeuu} +    Q^2_{eeeu}) +  Q_{eeuu} \Bigl( 4 + {|\nabla \Phi|^2}\Bigr)
                                +  2 + |\nabla \Phi|^2 + \frac{1}{4} g(n,n) |\nabla \Phi|^2.
                                \end{align*}
 Substituting the above formula into (\ref{190517}) we get the result.
   \end{proof}

   Our next goal is to rewrite the result of Lemma \ref{llambda}
   in terms of $\lambda, \nabla \lambda$.
   Note that
   $$
   (\nabla^2 \Phi)_{en} |\nabla \Phi| = (\nabla^2 \Phi)_{ee} \Phi_e + (\nabla^2 \Phi)_{eu} \Phi_u = \Lambda(e) \Phi_e.
   $$
   Analogously
    $$
   (\nabla^2 \Phi)_{un} |\nabla \Phi| =  \Lambda(u) \Phi_u.
   $$
   From (T4) we thus obtain  the following Lemma.
      \begin{lemma}
      The components of the tensor $Q$  satisfy the following  equations:
      \begin{equation}
      \label{phien}
      Q_{eeee} \Phi_{eee} + 3 Q_{eeeu} \Phi_{eeu} + 3Q_{eeuu} \Phi_{euu} +    Q_{euuu} \Phi_{uuu}
      =
      \Lambda(e) \Phi_e + \lambda_e
      \end{equation}
       \begin{equation}
       \label{phiun}
      Q_{eeeu} \Phi_{eee} + 3 Q_{eeuu} \Phi_{eeu} + 3Q_{euuu} \Phi_{euu} +    Q_{uuuu} \Phi_{uuu}
      = \Lambda(u) \Phi_u+ \lambda_u.
      \end{equation}
      \end{lemma}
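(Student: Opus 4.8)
The plan is to evaluate the trace identity (T4) in the $h$-orthonormal frame $(e,u)$ at the point $x_0$, taking $p$ to be first $e$ and then $u$. Recall that (T4) reads
$$(\nabla_p \Phi_{abc})\,\Phi^{abc} = (\nabla^2\Phi)_{pn}\,|\nabla\Phi| + \nabla_p\lambda.$$
Since $(e,u)$ is orthonormal for the Hessian metric $h$, whose components are the $\Phi_{ij}$, raising and lowering indices in this frame is trivial, so the contraction on the left becomes the finite sum $\sum_{a,b,c\in\{e,u\}} Q_{pabc}\,\Phi_{abc}$, where $Q_{pabc}=\nabla_p\Phi_{abc}$.

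I would then collect the eight summands into four, using the total symmetry of $Q_{abcd}$ recorded in the preceding Remark, together with the symmetry of $\Phi_{abc}$: the index multisets $\{e,e,e\}$, $\{e,e,u\}$, $\{e,u,u\}$, $\{u,u,u\}$ occur with multiplicities $1,3,3,1$, and this produces exactly the left-hand sides of \eqref{phien} and \eqref{phiun}. For the right-hand sides I would use that $\nabla_e\lambda=\lambda_e$ and $\nabla_u\lambda=\lambda_u$, since $\lambda$ is a scalar, and substitute the two identities established just before the lemma,
$$(\nabla^2\Phi)_{en}\,|\nabla\Phi| = \Lambda(e)\,\Phi_e, \qquad (\nabla^2\Phi)_{un}\,|\nabla\Phi| = \Lambda(u)\,\Phi_u,$$
which in turn follow from $(\nabla^2\Phi)_{eu}=0$ and the fact that $e,u$ are eigenvectors of $\nabla^2\Phi$ with eigenvalues $\Lambda(e),\Lambda(u)$.

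There is no serious obstacle here; the argument is pure bookkeeping. The only points needing a little care are that the contracted indices $a,b,c$ run over the two-dimensional tangent space, so the sum has exactly the eight terms above and the binomial coefficients $1,3,3,1$ emerge, and that the covariant derivative of the scalar $\lambda$ along a frame vector is simply its directional derivative, so that no connection terms appear on the right. The remainder is a direct substitution into (T4).
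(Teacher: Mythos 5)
Your proposal is correct and follows exactly the paper's route: the paper derives these equations by noting $(\nabla^2\Phi)_{en}|\nabla\Phi|=\Lambda(e)\Phi_e$, $(\nabla^2\Phi)_{un}|\nabla\Phi|=\Lambda(u)\Phi_u$ (using that $e,u$ are eigenvectors of $\nabla^2\Phi$) and then simply evaluating (T4) with $p=e$ and $p=u$ in the $h$-orthonormal frame, which is precisely your bookkeeping with the $1,3,3,1$ multiplicities. No gap; nothing to add.
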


     Let us consider equations  (\ref{Q1})-(\ref{Q3})  and (\ref{phien})- (\ref{phiun})
     as a system of five linear equations with five unknown variables which are the components of the symmetric tensor $Q$. This is an important feature of dimension $n = 2$, in higher dimensions the number of unknown variables in this approach seems to exceed the number of linear equations. This system is solved in the next lemma.
     From (T1) we obtain
     $$ \Phi_{eee} + \Phi_{euu} = - \Phi_e, \qquad \Phi_{uuu} + \Phi_{eeu} = -\Phi_u. $$

      \begin{lemma}
      \label{QeeuuQeeeu}
      One has         $$
                             Q_{eeuu} = \frac{   AC +  BD }{C^2 + D^2},
                             Q_{eeeu} = \frac{   A D  -     BC }{C^2 + D^2},
                             $$
                             and hence
                             $$ Q_{eeuu}^2 + Q_{eeeu}^2 = \frac{A^2+B^2}{C^2 + D^2}, $$
                                                         where
      $$
       A = - \Phi_{euu}\Lambda(e)   + \lambda_e,
       $$
       $$
       B = -\Phi_{uee} \Lambda(u)  + \lambda_u,
       $$
         $$
         C= 3 \Phi_{euu} - \Phi_{eee} = 4 \Phi_{euu} + \Phi_e,
         $$
         $$
         D = 3  \Phi_{eeu}  - \Phi_{uuu} = 4 \Phi_{uee} + \Phi_u.
         $$
                    \end{lemma}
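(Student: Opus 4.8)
The plan is to carry out exactly the program announced before the statement: regard the five relations (\ref{Q1})--(\ref{Q3}) together with (\ref{phien})--(\ref{phiun}) as a linear system in the five components $Q_{eeee},Q_{eeeu},Q_{eeuu},Q_{euuu},Q_{uuuu}$ of the symmetric tensor $Q$, and solve it explicitly. The point is that the three algebraic identities (\ref{Q1})--(\ref{Q3}) are already triangular: they express three of the components through the two remaining ones,
$$
Q_{eeee} = -\Lambda(e) - Q_{eeuu}, \qquad Q_{euuu} = -Q_{eeeu}, \qquad Q_{uuuu} = -\Lambda(u) - Q_{eeuu}.
$$
So it remains to substitute these into (\ref{phien}) and (\ref{phiun}) and solve the resulting $2\times 2$ system for $(Q_{eeuu},Q_{eeeu})$.

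First I would substitute into (\ref{phien}). Collecting the coefficients of $Q_{eeuu}$ and $Q_{eeeu}$ gives
$$
(3\Phi_{euu}-\Phi_{eee})\, Q_{eeuu} + (3\Phi_{eeu}-\Phi_{uuu})\, Q_{eeeu} = \Lambda(e)(\Phi_e + \Phi_{eee}) + \lambda_e.
$$
Using the relation $\Phi_{eee}+\Phi_{euu}=-\Phi_e$ derived above from (T1), the two left-hand coefficients become precisely $C$ and $D$, while the right-hand side becomes $\lambda_e - \Lambda(e)\Phi_{euu} = A$; thus this equation reads $C\, Q_{eeuu} + D\, Q_{eeeu} = A$. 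Doing the same substitution in (\ref{phiun}), and now using $\Phi_{uuu}+\Phi_{eeu}=-\Phi_u$, one obtains in the same way $D\, Q_{eeuu} - C\, Q_{eeeu} = B$. Hence the whole system collapses to
$$
\begin{pmatrix} C & D \\ D & -C \end{pmatrix} \begin{pmatrix} Q_{eeuu} \\ Q_{eeeu} \end{pmatrix} = \begin{pmatrix} A \\ B \end{pmatrix}.
$$

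The coefficient matrix has determinant $-(C^2+D^2)$, so provided $C^2+D^2 \ne 0$, Cramer's rule gives at once
$$
Q_{eeuu} = \frac{AC+BD}{C^2+D^2}, \qquad Q_{eeeu} = \frac{AD-BC}{C^2+D^2},
$$
and the identity for $Q_{eeuu}^2 + Q_{eeeu}^2$ then follows from the elementary relation $(AC+BD)^2 + (AD-BC)^2 = (A^2+B^2)(C^2+D^2)$, the cross terms $\pm 2ABCD$ cancelling. This completes the computation.

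The algebra above is routine; the one point that genuinely requires care is the non-degeneracy $C^2+D^2\ne 0$, which is what makes the closed-form solution meaningful, and I would treat it separately. The natural approach is to show, using (T1) and (T3), that $C^2 + D^2 = 8\lambda + |\nabla\Phi|^2$ — the very quantity appearing in the denominator in Theorem \ref{mainth} — which is non-negative by Remark \ref{below-bound}. Since $\nabla\Phi(x_0)\ne 0$, one is then left only with excluding the borderline case $8\lambda + |\nabla\Phi|^2 = 0$; this one can analyse through the equality cases of Remark \ref{below-bound} (which would force $\Phi_{vvv}=\Phi_{vnn}=0$ and $\Phi_{vvn}=-|\nabla\Phi|/4$), or simply observe that $Q$ is a genuine smooth tensor, so that the identity of the lemma, once established on the open set $\{C^2+D^2>0\}$, is all that is needed downstream. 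I expect establishing $C^2+D^2 = 8\lambda+|\nabla\Phi|^2$ to be the only step that goes beyond bookkeeping.
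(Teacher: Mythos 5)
Your proposal is correct and follows essentially the same route as the paper: both substitute $Q_{eeee}=-\Lambda(e)-Q_{eeuu}$, $Q_{uuuu}=-\Lambda(u)-Q_{eeuu}$, $Q_{euuu}=-Q_{eeeu}$ into (\ref{phien})--(\ref{phiun}) to obtain exactly the system $C\,Q_{eeuu}+D\,Q_{eeeu}=A$, $D\,Q_{eeuu}-C\,Q_{eeeu}=B$, which the paper solves by packaging it as the complex-linear equation $(C-iD)(Q_{eeuu}+iQ_{eeeu})=A-iB$ while you use Cramer's rule --- the same linear algebra. Your extra remark on the nondegeneracy $C^2+D^2=8\lambda+|\nabla\Phi|^2\ge 0$ is a sensible observation (the paper defers this identity to Lemma \ref{QABCD} and does not dwell on the borderline case), but it does not change the substance of the argument.
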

      \begin{proof}
      Substituting   $Q_{eeee} = - \Lambda(e)-  Q_{eeuu}$, $ Q_{uuuu} = - \Lambda(u) -  Q_{eeuu}$, $Q_{euuu} = - Q_{eeeu}$  into (\ref{phien}), (\ref{phiun}) one gets two linear equations:
              $$
              C Q_{eeuu} + D Q_{eeeu} = \Lambda(e) \Phi_e + \Lambda(e) \Phi_{eee}  + \lambda_e = - \Phi_{euu} \Lambda(e) + \lambda_e,
              $$

      $$
       D Q_{eeuu}     -   C Q_{eeeu} = \Lambda(u) \Phi_{uuu} + \Lambda(u) \Phi_u + \lambda_u  =  - \Phi_{uee} \Lambda(u) +  \lambda_u.
      $$
      We rewrite these two real equation as a single complex-linear equation
      $$ (C - i D) (Q_{eeuu} + i Q_{eeeu}) = A - i B. $$
      Solving this equation one gets the desired result.
         \end{proof}c

 \begin{lemma}
 \label{17.03}

 \begin{enumerate}
 \item
 For {\bf any} orthonormal frame $(e,u)$
  $$
  \lambda = 2 ( \Phi_{euu}^2 + \Phi_{eeu}^2) + \Phi_e \Phi_{uue} + \Phi_u \Phi_{eeu}
 $$
 \item
 For the frame $(e,u)$ of eigenvectors of $\nabla^2 \Phi$,
 $$
\lambda =  \frac{2\Phi_{euu}}{\Phi_e} \bigl( \Lambda(e) - \Lambda(u) \bigr) = \frac{2\Phi_{uee}}{\Phi_u} \bigl( \Lambda(u) - \Lambda(e) \bigr)
$$
\item  For the frame $(e,u)$ of eigenvectors of $\nabla^2 \Phi$,
$$
2  \bigl( \lambda_e \Phi_{euu} -  \lambda_u \Phi_{uee} \bigr) (\Lambda(u) - \Lambda(e))
= -  \lambda \langle \nabla \Phi, \nabla \lambda \rangle.
$$

 \end{enumerate}
  \end{lemma}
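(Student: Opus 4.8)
The plan is to establish the three parts in order, each following from the preceding one together with the trace identities obtained by differentiating the K\"ahler--Einstein equation. For part (1), fix an arbitrary orthonormal frame $(e,u)$ at $x_0$. Since raising indices is trivial in an orthonormal frame, I would first write
$\mathrm{Tr}\, g = g(e,e)+g(u,u) = \bigl(\Phi_{eee}^2 + 2\Phi_{eeu}^2 + \Phi_{euu}^2\bigr) + \bigl(\Phi_{uuu}^2 + 2\Phi_{euu}^2 + \Phi_{eeu}^2\bigr)$, using the symmetry $\Phi_{uue}=\Phi_{euu}$, $\Phi_{uee}=\Phi_{eeu}$. Then I substitute $\Phi_{eee} = -\Phi_e - \Phi_{euu}$ and $\Phi_{uuu} = -\Phi_u - \Phi_{eeu}$ coming from (T1), expand the squares, and observe that the ``pure'' terms collapse to $\Phi_e^2+\Phi_u^2 = |\nabla\Phi|^2$, leaving $\mathrm{Tr}\, g = |\nabla\Phi|^2 + 4(\Phi_{euu}^2+\Phi_{eeu}^2) + 2\Phi_e\Phi_{euu} + 2\Phi_u\Phi_{eeu}$. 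Now (T3), in the form $2\lambda = \mathrm{Tr}\, g - |\nabla\Phi|^2$, yields precisely the claimed formula.

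For part (2) I would specialize to the eigenframe $(e,u)$ of $\nabla^2\Phi$, so that $\Phi_{ee}=\Phi_{uu}=1$, $\Phi_{eu}=0$, and --- crucially --- $\Phi_{eun}=0$, which was already deduced above from $(\nabla^2\Phi)_{eu}=0$ and $\Phi_n\neq 0$. In this frame that last relation reads $\Phi_{eeu}\Phi_e + \Phi_{euu}\Phi_u = 0$. Using $(\nabla^2\Phi)_{ij} = \Phi_{ij} - \tfrac12\Phi_{ijk}\Phi^k$ and $\Phi_{ee}=\Phi_{uu}=1$, I get $\Lambda(e)-\Lambda(u) = \tfrac12(\Phi_{uuk}-\Phi_{eek})\Phi^k$; expanding in the frame (where $\Phi^k$ has components $\Phi_e,\Phi_u$), then eliminating $\Phi_{eee},\Phi_{uuu}$ by (T1) and $\Phi_{eeu}$ by the relation $\Phi_{eeu}\Phi_e = -\Phi_{euu}\Phi_u$, a short computation gives $\Lambda(e)-\Lambda(u) = \Phi_{euu}(\Phi_e^2+\Phi_u^2)/\Phi_e + (\Phi_e^2-\Phi_u^2)/2$ when $\Phi_e\neq 0$. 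Feeding the same two relations into the formula of part (1) turns it into $\lambda = (\Phi_{euu}/\Phi_e)\bigl[2\Phi_{euu}(\Phi_e^2+\Phi_u^2)/\Phi_e + \Phi_e^2-\Phi_u^2\bigr]$; comparing the two displays proves $\lambda = \tfrac{2\Phi_{euu}}{\Phi_e}(\Lambda(e)-\Lambda(u))$, and the second expression for $\lambda$ follows by invoking $\Phi_{eeu}\Phi_e = -\Phi_{euu}\Phi_u$ once more. The degenerate case $\Phi_e=0$ (whence $\Phi_u\neq 0$, as $\nabla\Phi(x_0)\neq 0$) is handled symmetrically, or by a continuity argument in $x_0$.

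Part (3) then follows from part (2) purely algebraically, with no further differentiation: solving part (2) gives $\Phi_{euu} = \tfrac{\lambda\Phi_e}{2(\Lambda(e)-\Lambda(u))}$ and $\Phi_{uee} = \tfrac{\lambda\Phi_u}{2(\Lambda(u)-\Lambda(e))}$, so that $\lambda_e\Phi_{euu} - \lambda_u\Phi_{uee} = \tfrac{\lambda(\Phi_e\lambda_e+\Phi_u\lambda_u)}{2(\Lambda(e)-\Lambda(u))}$. Multiplying by $2(\Lambda(u)-\Lambda(e))$ cancels the denominator and produces $-\lambda(\Phi_e\lambda_e+\Phi_u\lambda_u) = -\lambda\langle\nabla\Phi,\nabla\lambda\rangle$, as desired; the equal-eigenvalue case forces $\lambda=0$ by part (2) and both sides vanish.

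I expect no conceptual obstacle here: the whole argument is bookkeeping built on (T1), (T3), (T4) and the structure of the eigenframe. The two points that require care are (i) remembering that $(e,u)$ diagonalizes the \emph{covariant} Hessian $\nabla^2\Phi$, not merely $h$, which is exactly what forces $\Phi_{eun}=0$ and makes part (2) collapse to its clean form, and (ii) consistently using the full symmetry of $\Phi_{ijk}$ (so $\Phi_{eeu}=\Phi_{eue}=\Phi_{uee}$ and $\Phi_{euu}=\Phi_{ueu}=\Phi_{uue}$) when expanding traces and inner products.
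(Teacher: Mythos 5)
Your proposal is correct and follows essentially the same route as the paper: part (1) from the trace identities (T1), (T3) applied to $g(e,e)+g(u,u)$, part (2) from $\Phi_{eun}=0$ combined with the covariant Hessian formula and (T1), and part (3) as pure algebra from part (2) together with $\Phi_{euu}/\Phi_e=-\Phi_{uee}/\Phi_u$. Your explicit handling of the degenerate cases ($\Phi_e=0$, $\Lambda(e)=\Lambda(u)$) is a small additional care the paper leaves implicit, but it does not change the argument.
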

  \begin{proof}
  1) Follows from the trace identity $-\Phi_e  = \Phi_{eab} \Phi^{ab} = \Phi_{eee} + \Phi_{euu}$, the relations
 $$
g(e,e) = \Phi_{eee}^2 + 2  \Phi_{eeu}^2 +  \Phi_{euu}^2  =
 (\Phi_{euu} + \Phi_e)^2 + 2  \Phi_{eeu}^2 +  \Phi_{euu}^2   = 2 ( \Phi_{euu}^2 + \Phi_{eeu}^2) + 2 \Phi_e \Phi_{euu} + \Phi^2_e,
$$
$$
g(u,u)                                                                                                                                                                                                                                                                                                                                                                                                                                                                                                                   = 2 ( \Phi_{euu}^2 + \Phi_{eeu}^2) + 2 \Phi_u \Phi_{eeu} + \Phi^2_u,
 $$
 and $g(e,e) + g(u,u) = 2 \lambda + |\nabla \Phi|^2$, according to (T3).

2) From the equation $
\Phi_{eun}=0
$ we  infer
$0 = \Phi_{e} \Phi_{eeu} + \Phi_{u} \Phi_{euu}$. Hence
\begin{equation}
\frac{\Phi_{euu}}{\Phi_e} =
-\frac{\Phi_{uee}}{\Phi_u}.
\label{eq_1111}
\end{equation}
We note that by (T1),
$$
\Lambda(e) = (\nabla^2 \Phi)_{ee} = 1 - \frac{1}{2} \Phi_e \Phi_{eee} -  \frac{1}{2} \Phi_u \Phi_{eeu}  = 1 + \frac{\Phi^2_e}{2}
+ \frac{1}{2} \bigl(\Phi_e\Phi_{euu}  - \Phi_u \Phi_{eeu} \bigr).
$$
$$
\Lambda(u)   = 1 + \frac{\Phi^2_u}{2}
- \frac{1}{2} \bigl(\Phi_e\Phi_{euu}  - \Phi_u \Phi_{eeu} \bigr).
$$
Hence
\begin{equation}
\Lambda(e) - \Lambda(u) =  \frac{1}{2} \bigl( \Phi^2_e - \Phi^2_u\bigr)
+ \bigl(\Phi_e\Phi_{euu}  - \Phi_u \Phi_{eeu} \bigr).
\label{eq_1046} \end{equation}
Applying  relation $
\frac{\Phi_{euu}}{\Phi_e} =
-\frac{\Phi_{uee}}{\Phi_u}
$ and formula (1)
one gets
\begin{align*}
  \lambda &= 2 ( \Phi_{euu}^2 + \Phi_{eeu}^2) + \Phi_e \Phi_{uue} + \Phi_u \Phi_{eeu}
   = 2 ( \Phi_{euu}^2 + \Phi_{eeu}^2) +  \frac{\Phi_{euu}}{\Phi_e} \Bigl( \Phi_e^2 - \Phi^2_u \Bigr)
  \\&
  =  \frac{2\Phi_{euu}}{\Phi_e}  \Bigl( \frac{\Phi^2_e - \Phi^2_u}{2} + \frac{\Phi_e}{\Phi_{euu}} ( \Phi_{euu}^2 + \Phi_{eeu}^2)\Bigr)
  = \frac{2\Phi_{euu}}{\Phi_e}  \Bigl( \frac{\Phi^2_e - \Phi^2_u}{2} + {\Phi_e}{\Phi_{euu}}  + \frac{\Phi_e}{\Phi_{euu}} \Phi_{eeu}^2 )\Bigr)
  \\& = \frac{2\Phi_{euu}}{\Phi_e} \Bigl(  \frac{1}{2} \bigl( \Phi^2_e - \Phi^2_u\bigr)
+ \bigl(\Phi_e\Phi_{euu}  - \Phi_u \Phi_{eeu} \bigr) \Bigr) =  \frac{2\Phi_{euu}}{\Phi_e} \bigl( \Lambda(e) - \Lambda(u) \bigr).
\end{align*}

3) In view of (2), it suffices to prove that
$$ 2 (\lambda_e \Phi_{euu} - \lambda_u \Phi_{uee}) (\Lambda(u) - \Lambda(e)) = - 2 \frac{\Phi_{euu}}{\Phi_e} \bigl( \Lambda(e) - \Lambda(u) \bigr) (\Phi_e \lambda_e + \Phi_u \lambda_u). $$
This is  equivalent to
$$ \lambda_e \Phi_{euu} - \lambda_u \Phi_{uee} = (\lambda_e \Phi_e + \lambda_u \Phi_u) \frac{\Phi_{euu}}{\Phi_e}
 $$
which holds true in view of (\ref{eq_1111}).
\end{proof}

The next lemma follows immediately from Lemma
      \ref{QeeuuQeeeu} and Lemma \ref{17.03}(1).

\begin{lemma}
\label{QABCD}
$$
16(Q^2_{eeuu} +    Q^2_{eeeu}) +  2   Q_{eeuu} (4 + |\nabla \Phi|^2) =
\frac{ 16(A^2 + B^2) + 2 ( 4 + {|\nabla \Phi|^2}) ( AC + BD) }{C^2 + D^2},
$$
$$
C^2 + D^2  =  (4 \Phi_{euu} + \Phi_e)^2  + (4 \Phi_{uee} + \Phi_u)^2 =    8 \lambda + |\nabla \Phi|^2.
$$
\end{lemma}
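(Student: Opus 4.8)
The plan is to verify the two displayed identities by direct substitution, treating them as purely algebraic consequences of the formulas already established. For the first identity, I would simply plug into its left-hand side the explicit expressions for $Q_{eeuu}$ and $Q_{eeeu}$ furnished by Lemma~\ref{QeeuuQeeeu}, together with the combination $Q_{eeuu}^2 + Q_{eeeu}^2 = (A^2+B^2)/(C^2+D^2)$ recorded there. Since all three expressions carry the common denominator $C^2+D^2$, the numerators of $16(Q^2_{eeuu}+Q^2_{eeeu})$ and $2Q_{eeuu}(4+|\nabla\Phi|^2)$ add up over that denominator to exactly $16(A^2+B^2) + 2(4+|\nabla\Phi|^2)(AC+BD)$, which is the claimed right-hand side. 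Nothing beyond collecting fractions is required.

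For the second identity, I would first observe that the equality $C^2+D^2 = (4\Phi_{euu}+\Phi_e)^2 + (4\Phi_{uee}+\Phi_u)^2$ is nothing but the definitions $C = 4\Phi_{euu}+\Phi_e$ and $D = 4\Phi_{uee}+\Phi_u$ from Lemma~\ref{QeeuuQeeeu}. I would then expand the two squares and use the symmetry $\Phi_{uee} = \Phi_{eeu}$ of the third derivatives to obtain
$$ C^2 + D^2 = 16(\Phi_{euu}^2 + \Phi_{eeu}^2) + 8(\Phi_e\Phi_{euu} + \Phi_u\Phi_{eeu}) + \Phi_e^2 + \Phi_u^2. $$
Because $(e,u)$ is an orthonormal frame, $\Phi_e^2 + \Phi_u^2 = |\nabla\Phi|^2$, so it only remains to recognize the first two groups of terms. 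But these are precisely eight times the right-hand side of Lemma~\ref{17.03}(1), namely $\lambda = 2(\Phi_{euu}^2+\Phi_{eeu}^2) + \Phi_e\Phi_{uue} + \Phi_u\Phi_{eeu}$ (again using $\Phi_{uue}=\Phi_{euu}$). Substituting yields $C^2+D^2 = 8\lambda + |\nabla\Phi|^2$.

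Since everything reduces to substitution, there is no genuine obstacle here; the statement is stated to follow immediately from the two quoted lemmas, and my proof would just make that explicit. The only point that needs care is the bookkeeping of the full symmetry of the tensor $Q_{abcd} = \nabla_a\Phi_{bcd}$ and of the third derivatives $\Phi_{uee} = \Phi_{eeu}$, so that the cross terms $\Phi_e\Phi_{euu}$ and $\Phi_u\Phi_{eeu}$ appearing in the expansion of $C^2+D^2$ are matched correctly against those in Lemma~\ref{17.03}(1). Once this trivial coincidence is checked, both identities follow at once.
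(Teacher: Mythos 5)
Your proposal is correct and is exactly the argument the paper intends: the paper states that Lemma \ref{QABCD} ``follows immediately from Lemma \ref{QeeuuQeeeu} and Lemma \ref{17.03}(1)'', and your write-up simply makes those substitutions explicit (the common denominator $C^2+D^2$ for the first identity, and the expansion of $C^2+D^2$ matched against $8\lambda$ plus $\Phi_e^2+\Phi_u^2=|\nabla\Phi|^2$ for the second). No gap; the bookkeeping of the symmetry of the third derivatives is handled correctly.
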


    \begin{lemma}
    \label{longexp}
    The following identity holds
    \begin{eqnarray*} & 16(  A^2   + B^2 ) + 2( 4 + {|\nabla \Phi|^2}) (AC+BD)
    \\&
   =    \lambda \Bigl[  {|\nabla \Phi|^2} \bigl( \lambda - |\nabla \Phi|^2\bigr)  - 6|\nabla \Phi|^2 -8 \Bigr]
 + 16|\nabla \lambda|^2
+ 2\bigl[4 (1- \lambda) +  |\nabla \Phi|^2 \bigr] \langle \nabla \Phi, \nabla \lambda \rangle.
 \end{eqnarray*}
    \end{lemma}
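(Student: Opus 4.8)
The plan is to treat the asserted equality as a polynomial identity in the third derivatives of $\Phi$ at the fixed point $x_0$, subject to the relations gathered in the preceding sections, and to verify it by expanding the left‑hand side and regrouping. It is convenient to write $m = |\nabla \Phi|^2$, $\mu = \Lambda(e)$, $\nu = \Lambda(u)$, $\delta = \mu - \nu$ and $r = \Phi_e \Phi_{euu} - \Phi_u \Phi_{uee}$. From the proof of Lemma~\ref{17.03} one has $\mu = 1 + (\Phi_e^2 + r)/2$ and $\nu = 1 + (\Phi_u^2 - r)/2$, hence $\mu + \nu = 2 + m/2$, i.e.\ $4 + m = 2(\mu + \nu)$; one also records the relation $\lambda m = 2 r \delta$ (a consequence of Lemma~\ref{17.03}(2), or of Lemma~\ref{17.03}(1) together with~(\ref{eq_1111})), and, from the identity $\Phi_u \Phi_{euu} + \Phi_e \Phi_{uee} = 0$ of~(\ref{eq_1111}), the two relations $\Phi_{euu}^2 + \Phi_{uee}^2 = r^2/m$ and $\Phi_e \Phi_{euu}\, \mu + \Phi_u \Phi_{uee}\, \nu = (r/m)(\Phi_e^2 \mu - \Phi_u^2 \nu)$, both of which follow by clearing denominators and using that $(\Phi_u \Phi_{euu} + \Phi_e \Phi_{uee})^2$ vanishes.

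First I would substitute the formulas for $A, B, C, D$ from Lemma~\ref{QeeuuQeeeu} into $16(A^2 + B^2) + 2(4 + m)(AC + BD)$ and sort the result by its degree in $(\lambda_e, \lambda_u)$. The part quadratic in $\nabla \lambda$ is at once $16(\lambda_e^2 + \lambda_u^2) = 16 |\nabla \lambda|^2$, which is exactly the $16|\nabla\lambda|^2$ summand on the right. For the part linear in $\nabla \lambda$, collecting the coefficients of $\lambda_e$ and $\lambda_u$ gives
\[
8 \Phi_{euu}(4 + m - 4\mu)\, \lambda_e + 8 \Phi_{uee}(4 + m - 4\nu)\, \lambda_u + 2(4+m)\langle \nabla \Phi, \nabla \lambda \rangle ;
\]
since $4 + m = 2(\mu + \nu)$ one has $4 + m - 4\mu = -2\delta$ and $4 + m - 4\nu = 2\delta$, and then Lemma~\ref{17.03}(3), which amounts to $\delta\bigl(\Phi_{euu}\lambda_e - \Phi_{uee}\lambda_u\bigr) = \tfrac{1}{2}\, \lambda \langle \nabla \Phi, \nabla \lambda \rangle$, converts the linear part into $2\bigl[(4+m) - 4\lambda\bigr]\langle \nabla \Phi, \nabla \lambda \rangle = 2\bigl[4(1-\lambda) + |\nabla \Phi|^2\bigr]\langle \nabla \Phi, \nabla \lambda \rangle$, which is exactly the $\langle\nabla\Phi,\nabla\lambda\rangle$ summand on the right.

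It remains to handle the part free of $\nabla \lambda$, which is
\[
16\bigl(\Phi_{euu}^2 \mu^2 + \Phi_{uee}^2 \nu^2\bigr) - 8(4+m)\bigl(\Phi_{euu}^2 \mu + \Phi_{uee}^2 \nu\bigr) - 2(4+m)\bigl(\Phi_e \Phi_{euu}\,\mu + \Phi_u \Phi_{uee}\,\nu\bigr).
\]
Using $4 + m = 2(\mu + \nu)$ the first two groups combine into $-16 \mu\nu (\Phi_{euu}^2 + \Phi_{uee}^2)$. I would then substitute $\Phi_{euu}^2 + \Phi_{uee}^2 = r^2/m$, $\Phi_e \Phi_{euu}\,\mu + \Phi_u \Phi_{uee}\,\nu = (r/m)(\Phi_e^2 \mu - \Phi_u^2 \nu)$, $\mu\nu = \tfrac{1}{4}\bigl[(\mu+\nu)^2 - \delta^2\bigr]$, and $\Phi_e^2 \mu - \Phi_u^2 \nu = 2\delta(\mu + \nu - 1) - r(\mu+\nu)$ (this last using $\Phi_e^2 = 2\mu - 2 - r$ and $\Phi_u^2 = 2\nu - 2 + r$). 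After these substitutions the terms carrying $(\mu+\nu)^2$ cancel against one another, and, using $r\delta = \lambda m / 2$, the remaining expression collapses to $\lambda^2 m - 4\lambda(\mu+\nu)(\mu+\nu-1)$; inserting $\mu + \nu = 2 + m/2$ turns this into $\lambda^2 m - \lambda(m^2 + 6m + 8) = \lambda\bigl[\,|\nabla \Phi|^2(\lambda - |\nabla \Phi|^2) - 6|\nabla \Phi|^2 - 8\,\bigr]$, the remaining summand on the right. Summing the three parts completes the proof.

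The conceptual content here is slight; the genuine obstacle is the algebraic bookkeeping in the last step, where roughly ten quadratic‑in‑third‑derivative monomials must be tracked and one has to recognize that the natural intermediate quantities are $\mu + \nu = \Lambda(e) + \Lambda(u)$ (which equals $2 + |\nabla \Phi|^2/2$), the eigenvalue gap $\delta = \Lambda(e) - \Lambda(u)$, and the combination $r = \Phi_e \Phi_{euu} - \Phi_u \Phi_{uee}$, linked by $\lambda |\nabla \Phi|^2 = 2 r \delta$. A minor caveat: some identities in Lemma~\ref{17.03} are written with $\Phi_e$ or $\Phi_u$ in a denominator, so one should use them in cleared‑denominator form, which stays valid even where $\Phi_e$ or $\Phi_u$ vanishes (by~(\ref{eq_1111}) the corresponding third derivative then vanishes too), or simply argue by continuity, since those points form a proper subvariety.
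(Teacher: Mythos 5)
Your proposal is correct: I checked each step (the degree-in-$\nabla\lambda$ split, the coefficients $8\Phi_{euu}(4+m-4\mu)$ and $8\Phi_{uee}(4+m-4\nu)$ in the linear part, the identities $\Phi_{euu}^2+\Phi_{uee}^2=r^2/m$, $\Phi_e^2\mu-\Phi_u^2\nu=2\delta(\mu+\nu-1)-r(\mu+\nu)$, $\lambda m=2r\delta$, and the final collapse to $\lambda^2 m-4\lambda(\mu+\nu)(\mu+\nu-1)$), and the bookkeeping closes exactly as you claim. The overall strategy coincides with the paper's: both expand $16(A^2+B^2)+2(4+|\nabla\Phi|^2)(AC+BD)$, use $4+|\nabla\Phi|^2=2(\Lambda(e)+\Lambda(u))$, and invoke Lemma \ref{17.03}(2),(3) to convert the frame-dependent combinations into $\lambda$ and $\langle\nabla\Phi,\nabla\lambda\rangle$. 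Where you genuinely diverge is in the $\nabla\lambda$-free part: the paper recognizes $\Lambda(e)\Lambda(u)=\det\nabla^2\Phi$ and routes the computation through the determinant and Hilbert--Schmidt identities (items (D) and (HS)), together with $g(n,n)$ and $\Phi_{nnn}$, before $\lambda$ reappears at the very end; you instead parametrize everything by $\mu+\nu$, $\delta=\Lambda(e)-\Lambda(u)$ and $r=\Phi_e\Phi_{euu}-\Phi_u\Phi_{uee}$, using $\mu=1+(\Phi_e^2+r)/2$, $\nu=1+(\Phi_u^2-r)/2$ and $\lambda|\nabla\Phi|^2=2r\delta$, so that (D), (HS) and the second fundamental quantities never enter. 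Your organization is arguably tidier (fewer intermediate geometric identities, a clean three-way split by degree in $\nabla\lambda$), and your cleared-denominator remark handles the case $\Phi_e=0$ or $\Phi_u=0$ more carefully than the paper, which tacitly divides by these quantities in Lemma \ref{17.03}; the paper's route, on the other hand, keeps the geometric meaning of the intermediate terms ($\det\nabla^2\Phi$, $\nabla^2\Phi(\nabla\Phi,\nabla\Phi)$) visible, which is what motivates the auxiliary identities it records.
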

    \begin{proof}
    Using identities
 $
  A=     - \Lambda(e) \Phi_{euu}  + \lambda_e,
  $
 $
  B=   - \Lambda(u) \Phi_{eeu} + \lambda_u
  $
  one gets
  $$
  A^2 + B^2 = \Lambda^2(e) \Phi^2_{euu} + \Lambda^2(u) \Phi^2_{eeu} + |\nabla \lambda|^2
  - 2 (\lambda_e \Lambda(e) \Phi_{euu}  + \lambda_u  \Lambda(u) \Phi_{eeu}  ).
  $$
  \begin{align*}
  AC + BD & = [- \Lambda(e) \Phi_{euu} + \lambda_e]  (4  \Phi_{euu} + \Phi_e) + [ - \Lambda(u) \Phi_{eeu} + \lambda_u]  (4  \Phi_{eeu} + \Phi_u)
 \\& = - 4 \Lambda(e) \Phi_{euu}^2 - 4 \Lambda(u) \Phi_{eeu}^2 - \Lambda(e) \Phi_e \Phi_{euu} - \Lambda(u) \Phi_u \Phi_{eeu}
 \\&
 + \lambda_e (4  \Phi_{euu} + \Phi_e) +  \lambda_u  (4  \Phi_{eeu} + \Phi_u).
  \end{align*}
  From the trace identity $\Phi^{ab} (\nabla^2 \Phi)_{ab} = 2 + \frac{|\nabla \Phi|^2}{2}$ we infer
 $$
 4 + {|\nabla \Phi|^2} = 2 (\Lambda(e) + \Lambda(u)).
 $$
Thus,
 \begin{eqnarray*}
& 16(  A^2   + B^2 ) + 2( 4 + {|\nabla \Phi|^2}) (AC+BD)
\\& = 16( \Lambda^2(e) \Phi^2_{euu} + \Lambda^2(u) \Phi^2_{eeu})
+ 16|\nabla \lambda|^2
  - 32 (\lambda_e \Lambda(e) \Phi_{euu}  + \lambda_u  \Lambda(u) \Phi_{eeu}  ).
\\& - 4 (\Lambda(e) + \Lambda(u)) ( 4 \Lambda(e) \Phi_{euu}^2 + 4 \Lambda(u) \Phi_{eeu}^2 + \Lambda(e) \Phi_e \Phi_{euu} + \Lambda(u) \Phi_u \Phi_{eeu})
\\& + 4 ( \Lambda(e) + \Lambda(u)) \Bigl( \lambda_e (4  \Phi_{euu} + \Phi_e) +  \lambda_u  (4  \Phi_{eeu} + \Phi_u) \Bigr)
\\& = - 8 \Lambda(e) \Lambda(u) (\lambda - \Phi_e \Phi_{euu} - \Phi_u \Phi_{eeu}) - 4 (\Lambda(e) + \Lambda(u)) ( \Lambda(e) \Phi_e \Phi_{euu} + \Lambda(u) \Phi_u \Phi_{eeu})
\\& + 16|\nabla \lambda|^2
+ 2(4 + |\nabla \Phi|^2) \langle \nabla \Phi, \nabla \lambda \rangle
+ 16 \lambda_e \Phi_{euu} (\Lambda(u) - \Lambda(e)) + 16 \lambda_u \Phi_{uee} (\Lambda(e) - \Lambda(u))
\\& = - 8 \lambda \det \nabla^2 \Phi  + 4 \bigl( \Lambda(e) \Lambda(u)(\Phi_e \Phi_{euu} + \Phi_{u} \Phi_{uee}) - \Lambda^2(e) \Phi_{e} \Phi_{euu} - \Lambda^2(u) \Phi_{u} \Phi_{eeu}  \bigr)
\\&
+ 16|\nabla \lambda|^2
+ 2(4 + |\nabla \Phi|^2) \langle \nabla \Phi, \nabla \lambda \rangle
+ 16 \bigl( \lambda_e \Phi_{euu} -  \lambda_u \Phi_{uee} \bigr) (\Lambda(u) - \Lambda(e))
 \\&
    = - 8 \lambda \Bigl( 1 + \frac{|\nabla \Phi|^2}{2} + \frac{|\nabla \Phi|^2}{8} \bigl( |\nabla \Phi|^2 - g(n,n) \bigr) \Bigr)
+ 4 ( \Lambda(u) -  \Lambda(e))  (\Lambda(e) \Phi_e \Phi_{euu} - \Lambda(u) \Phi_{u} \Phi_{eeu})
\\&
+ 16|\nabla \lambda|^2
+ 2(4 + |\nabla \Phi|^2) \langle \nabla \Phi, \nabla \lambda \rangle
+ 16 \bigl( \lambda_e \Phi_{euu} -  \lambda_u \Phi_{uee} \bigr) (\Lambda(u) - \Lambda(e)).
 \end{eqnarray*}
 Next we note that by (\ref{eq_1046}),
 \begin{eqnarray*}
& (\Lambda(e) \Phi_e \Phi_{euu} - \Lambda(u) \Phi_{u} \Phi_{eeu})                                                                                                                                                                                                                                                                                                                                                                                                                                                                                                                  = \Lambda(e) \Phi_e \Phi_{euu} + \Lambda(u) \frac{\Phi^2_{u}}{\Phi_e} \Phi_{euu}
\\& = \frac{\Phi_{euu}}{\Phi_e} \bigl( \Lambda(e) \Phi^2_e +  \Lambda(u) \Phi^2_u\bigr)
= \frac{\Phi_{euu}}{\Phi_e}  \nabla^2 \Phi(\nabla \Phi, \nabla \Phi).
\end{eqnarray*}
Consequently,
$$
( \Lambda(u) -  \Lambda(e))  (\Lambda(e) \Phi_e \Phi_{euu} - \Lambda(u) \Phi_{u} \Phi_{eeu})
= ( \Lambda(u) -  \Lambda(e))   \frac{\Phi_{euu}}{\Phi_e}  \nabla^2 \Phi(\nabla \Phi, \nabla \Phi)
$$
 and  using Lemma \ref{17.03}(2) we get
$$
( \Lambda(u) -  \Lambda(e))  (\Lambda(e) \Phi_e \Phi_{euu} - \Lambda(u) \Phi_{u} \Phi_{eeu})
= -\frac{\lambda}{2} \nabla^2 \Phi(\nabla \Phi, \nabla \Phi).
$$
Finally, recall that by Lemma \ref{17.03}(3)
$$
16  \bigl( \lambda_e \Phi_{euu} -  \lambda_u \Phi_{uee} \bigr) (\Lambda(u) - \Lambda(e))
= - 8 \lambda \langle \nabla \Phi, \nabla \lambda \rangle.
 $$
 Hence
     \begin{eqnarray*} & 16(  A^2   + B^2 ) + 2( 4 + {|\nabla \Phi|^2}) (AC+BD)
    \\&
   =   - 8 \lambda \Bigl[ 1 + \frac{|\nabla \Phi|^2}{2} + \frac{|\nabla \Phi|^2}{8} \bigl( |\nabla \Phi|^2 - g(n,n) \bigr) + \frac{1}{4}|\nabla \Phi|^2  (\nabla^2 \Phi(n,n))  \Bigr]
\\& + 16|\nabla \lambda|^2
+ 2\bigl[4 (1- \lambda) +  |\nabla \Phi|^2 \bigr] \langle \nabla \Phi, \nabla \lambda \rangle.
 \end{eqnarray*}
 Finally, as $\nabla^2 \Phi(n,n) = 1 - |\nabla \Phi| \cdot \Phi_{nnn}  / 2$,
 \begin{eqnarray*}
  &1 + \frac{|\nabla \Phi|^2}{2} + \frac{|\nabla \Phi|^2}{8} \bigl( |\nabla \Phi|^2 - g(n,n) \bigr) + \frac{1}{4}|\nabla \Phi|^2  (\nabla^2 \Phi(n,n))
 \\& =
  1 + \frac{3|\nabla \Phi|^2}{4} + \frac{|\nabla \Phi|^2}{8} \bigl( |\nabla \Phi|^2 - g(n,n)  - \Phi_{nnn} |\nabla \Phi| \bigr)
  =
  1 + \frac{3|\nabla \Phi|^2}{4} + \frac{|\nabla \Phi|^2}{8} \bigl( |\nabla \Phi|^2 - \lambda \bigr).
 \end{eqnarray*}
Substituting this identity one gets the claim.
\end{proof}

\begin{proof}[Proof of Theorem \ref{mainth}] By  Lemma \ref{llambda} and Lemma \ref{QABCD},
$$ 2 L(\lambda) = 2 \lambda + 3 \lambda^2 + \lambda |\nabla \Phi|^2 + \frac{ 16(A^2 + B^2) + 2 ( 4 + {|\nabla \Phi|^2}) ( AC + BD) }{8 \lambda + |\nabla \Phi|^2} $$
We now plug in the formula from Lemma \ref{longexp}, and apply direct computations to complete the proof
of the theorem.
\end{proof}

  \section{Examples}

        It follows from (L2) above that $L(|\nabla \Phi|^2) > 0$ and in particular, $|\nabla \Phi|^2$ cannot be a constant function.
        From Theorem \ref{mainth}
        we thus conclude  that there are only two cases of the constant corvature in the two-dimensional case:
        $\lambda=0$ (cube) and $\lambda=\frac{1}{3}$ (simplex). In this section we consider both examples.
        In addition, we consider the ball and demonstrate that the Ricci tensor can be unbounded from below.

  \subsection{Cube}
  In the case of the cube
  $$
  K = [-1,1]^n
  $$
  the solution
  $$
  \Phi(x) = \sum_{i=1}^n \varphi(x_i)
  $$
  is a sum of one-dimensional independent potentials solving the following:
  $$
  \varphi''  = e^{-\varphi},
  $$
  $$ \varphi'(\mathbb{R}) = (-1,1).
  $$
  So $\varphi(t) = \log [2 \cosh^2 (t/2)]$.  Since the Riemannian manifold $(\mathbb{R}^n, D^2 \Phi)$ is Euclidean (it is in fact isometric to
  a cube of sidelength $\sqrt{2} \cdot \pi)$, one has $$\rm{Ric}=0.$$
  In particular, the following holds:
  $$
  \Phi_{abc} \Phi^{abc} = \sum_{i=1}^n \frac{(\varphi^{'''})^2}{(\varphi^{''})^3}(x_i)
  = \sum_{i=1}^n \frac{(\varphi')^2}{\varphi^{''}} (x_i) = |\nabla \Phi|^2
  = \sum_{i=1}^n e^{\varphi(x_i)} (\varphi')^2 (x_i).
  $$
  The latter expression has exponential growth. This means, in particular, that the natural Riemannian norms of  the first and third derivatives of the potential $\Phi$ are in general unbounded.

  \subsection{Ball}
  Here $K = B^n$ and $n \geq 2$. By uniqueness, the solution $\Phi$ of (\ref{KE}) with $\nabla \Phi(\RR^n) = B^n$ is (up to translation) of the form
  $$ \Phi(x) = \vphi(|x|) $$
  where the smooth function $\vphi$ on the half-line is increasing and it satisfies $\vphi'(0) = 0$ and $\vphi'(+\infty) = 1$. The equation (\ref{KE}) takes the form
 $$ \det \left[ \vphi'(|x|) \frac{Id}{|x|} + \left( \vphi''(|x|) - \vphi'(|x|) / |x| \right) \frac{x \otimes x}{|x|^2}     \right] = \exp(-\vphi(|x|)) $$
 or equivalently,
 $$ \left( \frac{\vphi'(r)}{r} \right)^{n-1} \cdot \vphi''(r) = e^{-\vphi(r)}. $$
 In the Riemannian manifold $(\mathbb{R}^n, D^2 \Phi)$, rays emanating from the origins are geodesics. Denote
 $$ D_n := \int_0^{\infty} \sqrt{\vphi''(r)} dr = \int_0^{\infty} e^{-\vphi(r)/2} \left( \frac{r}{\vphi'(r)} \right)^{\frac{n-1}{2}} dr < \infty $$
 since the integral clearly converges at $0$ (as $\vphi'(0) = 0$ and $\vphi$ is smooth) and at infinity (as the integrand decays exponentially).
 Then the Riemannian manifold $(\mathbb{R}^n, D^2 \Phi)$ is in fact an open Riemannian ball of a finite radius $D_n$ around $0$, denoted by $B(0, D_n)$.
 For  $0 < R < D_n$ let $t = t(R) > 0$ be such that
 $$ t = \int_0^{R} \sqrt{\vphi''(r)} dr. $$
Then $\lim_{R \rightarrow D_n} t(R) = +\infty$.
 Recall that the volume of the Euclidean unit sphere $S^{n-1}$ is $\kappa_n = n \pi^{n/2} / \Gamma(1 + n/2)$.
Then the Riemannian volume of the Riemannian sphere $\partial B(0, R)$ equals
$$ \kappa_n t^{n-1} e^{-\vphi(t) / 2} \cdot \frac{1}{\sqrt{ \vphi''(t)} } = \kappa_n ( t \vphi^{\prime}(t) )^{\frac{n-1}{2}} \longrightarrow +\infty $$
as $t \rightarrow \infty$ or as $R \rightarrow D_n$. Thus in this Riemannian manifold, a sequence of spheres of bounded radius have volumes tending to infinity.
This means that the Ricci curvature is not bounded from below.

\begin{remark} {\rm
What are the sharp estimates from below for the sectional curvature?
One bound is given in  Remark \ref{below-bound}. One might expect the two dimensional ball to be a natural example
to prove sharpness of this estimate. However, this is not the case. 

Indeed in polar coordinates, the metric is separated nicely as
$$ h = \vphi''(r) (dr)^2 + (r \vphi^{\prime}(r)) (d \theta)^2.$$
In the two-dimensional case, the sectional curvature is given by the usual formula
$$ H = -\frac{1}{2 \sqrt{ r \vphi'(r) \vphi''(r)}} \frac{d}{dr} \left[ \frac{\vphi'(r) + r \vphi''(r)}{\sqrt{ r \vphi'(r) \vphi''(r)}} \right]. $$
Since $\vphi'(r) \vphi''(r) = r e^{-\vphi(r)}$, we have
\begin{align*}
H = -\frac{e^{\vphi/2}}{2r} \cdot \frac{d}{dr} \left[ e^{\vphi/2} \frac{\vphi'}{r} + \frac{r}{\vphi'} e^{-\vphi/2} \right] = -\frac{(\vphi')^2}{4 r^2} e^{\vphi} + \frac{1}{4} + \frac{\vphi'}{2 r^3} - \frac{1}{r \vphi'} + \frac{r}{2 (\vphi')^3} e^{-\vphi}.
\end{align*}
Recall that as $r \rightarrow \infty$, the ratio $\vphi'(r) / r$ tends to one. Therefore, for large $r$,
$$ H = -\frac{(\vphi')^2}{4 r^2} e^{\vphi} + \frac{1}{4} + O \left( \frac{1}{r^2} \right). $$
On the other hand $|\nabla \Phi|^2 = (\vphi')^2 / \vphi'' = e^{\vphi} \cdot (\vphi')^3 / r $. We thus see that the estimate from Remark 2.4 is off by a factor of roughly $r^2/2$.}
\end{remark}

  \subsection{Simplex}

  This is the most important example for us, because this is the extremal situation, where the maximal value of the Ricci
  tensor is attained (at least in dimension 2).

  It can be easily checked (see explanations in \cite{K_moment}) that
  in case of the simplex
  $$
K = \bigl\{ \sum_{i=1}^{n} x_i \le 1, \ x_i \ge -1\bigr\}
$$
  the explicit solution is given by the formula
  $$
  \Phi = (n+1) \log \bigl(1+ \sum_{i=1}^n e^{x_i} \bigr) - \sum_{i=1}^n x_i + c(n)
  $$
  Differentiating twice one gets
  $$
  \Phi_i = (n+1) \frac{e^{x_i}}{1 + \sum_{i=1}^n e^{x_i}} - 1
  $$
  $$
  \Phi_{ij} = (\Phi_{i}+1)\delta_{ij}  - \frac{(1+\Phi_i)(1+ \Phi_j)}{n+1}
  $$
  The note that $D^2 \Phi$ is nothing else but the Fubini-Study metric (after appropriate complexification) in the complex projective space (see explanations in \cite{K_moment}).
  $$
  \Phi^{ij} = \frac{1}{\Phi_{i}+1} \delta_{ij}  - \frac{1}{(\sum_{i=1}^n \Phi_i)-1}.
  $$
  Next we proceed with the third derivatives
  $$
  \Phi_{ijk} = \Phi_{ik} \delta_{ij} - \frac{\Phi_{ik} (\Phi_j+1)}{n+1} - \frac{\Phi_{jk} (\Phi_i+1)}{n+1}.
  $$
  $$
  \Phi_{ij}^{k}  = \delta^k_{ij} - \frac{\delta_{i}^{k} (\Phi_j+1)}{n+1} - \frac{\delta_{j}^k (\Phi_i+1)}{n+1}.
  $$
  $$
  \Phi_{ijk} \Phi^k = \Phi_i \delta_{ij} - \frac{\Phi_i(\Phi_j+1) + \Phi_j(\Phi_i+1) }{n+1}.
  $$
  Here  $\delta^k_{ij}=1$ if $i=j=k$ and $\delta^k_{ij}=0$  in any other case.
  Let us compute tensor the $g_{ij}$
  \begin{align*}
  g_{ij} & = \Phi_{ik}^{l} \Phi_{jl}^{k} =
  \Bigl( \delta_{ik}^l - \frac{\delta_{i}^{l} (\Phi_k+1)}{n+1} - \frac{\delta_{k}^l (\Phi_i+1)}{n+1}\Bigr)
   \Bigl( \delta_{jl}^k - \frac{\delta_{j}^{k} (\Phi_l+1)}{n+1} - \frac{\delta_{l}^k (\Phi_j+1)}{n+1}\Bigr)
   \\& = \delta_{ij} \Bigl( 1- \frac{2(\Phi_i+1)}{n+1}\Bigr) + \frac{n+3}{(n+1)^2}
   (\Phi_i+1)(\Phi_{j}+1) - \frac{(\Phi_i+1)+(\Phi_{j}+1) }{n+1}.
  \end{align*}

  Finally, taking the sum of two tensors one can easily get
  $$
  {\rm{Ric}}_{ij} = \frac{1}{4} \bigl( g_{ij} +  \Phi_{ijk} \Phi^k  \bigr) = \frac{n-1}{4(n+1)}\Phi_{ij}.
  $$

  \section{Appendix}

  In the Appendix we establish estimates on the growth of the first and third derivatives.
  Unlike the previous sections, we don't assume that $n=2$.

   \subsection{First-order estimates}

 In this section we prove a priori  estimates for the  squared gradient norm
  $$
 \Phi_i \Phi^i = |\nabla \Phi|^2.
 $$

 \begin{lemma} Assume that $K \subset B_R(0)$. Then
 \label{rough-est-1-ord}
$$
|\nabla \Phi|^2  \le 2^{n-1} R^{2n}  e^{\Phi}.
$$
 \end{lemma}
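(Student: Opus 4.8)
The plan is to rewrite the inequality as a pointwise spectral estimate for $D^2\Phi$ and then to derive that estimate from the equation. Writing $\langle\cdot,\cdot\rangle$ for the Euclidean inner product and $\nabla\Phi=(\Phi_1,\dots,\Phi_n)$ for the Euclidean gradient vector, the quantity under study is
$$
|\nabla\Phi|^2=\Phi_i\Phi^i=\Phi^{ij}\Phi_i\Phi_j=\bigl\langle (D^2\Phi)^{-1}\nabla\Phi,\ \nabla\Phi\bigr\rangle .
$$
Let $\mu_1(x)\le\dots\le\mu_n(x)$ be the eigenvalues of $D^2\Phi(x)$. Since $\nabla\Phi(x)\in K\subseteq B_R(0)$ the Euclidean length of $\nabla\Phi(x)$ is at most $R$, so $|\nabla\Phi(x)|^2\le R^2/\mu_1(x)$, while $\det D^2\Phi=e^{-\Phi}$ gives $\mu_1(x)=e^{-\Phi(x)}\big/\bigl(\mu_2(x)\cdots\mu_n(x)\bigr)$. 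Hence the asserted bound $|\nabla\Phi|^2\le 2^{n-1}R^{2n}e^{\Phi}$ would follow once one knows $\mu_2(x)\cdots\mu_n(x)\le (2R^2)^{n-1}$ for every $x$, and in particular it follows from the a priori estimate $\mu_n(x)\le 2R^2$ on the largest Hessian eigenvalue. The same reduction has a geometric (slicing) form: after rotating so that $\nabla\Phi(x_0)$ points along $e_1$ one has $|\nabla\Phi(x_0)|^2=|\nabla\Phi(x_0)|^2_{\mathrm{eucl}}\,e^{\Phi(x_0)}\det D^2\widetilde\Phi(x_0)$, where $\widetilde\Phi$ is the restriction of $\Phi$ to the affine hyperplane through $x_0$ orthogonal to $\nabla\Phi(x_0)$; this $\widetilde\Phi$ is convex on $\mathbb{R}^{n-1}$, attains its minimum at $x_0$, and $\nabla\widetilde\Phi$ maps $\mathbb{R}^{n-1}$ into the ball of radius $R$, and what must be shown is $\det D^2\widetilde\Phi(x_0)\le (2R^2)^{n-1}$.

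The heart of the argument, and the step I expect to be the main obstacle, is this a priori second-derivative estimate. The equation enters in an essential way: for a general convex $\Phi$ with $\nabla\Phi(\mathbb{R}^n)\subseteq B_R$ --- for instance $\Phi(x)=\sum_i\psi(x_i)$ with $\psi$ having arbitrarily large $\psi''(0)$ --- the estimate is false, so convexity of $\Phi$, the condition $\nabla\Phi(\mathbb{R}^n)=K$, and $\det D^2\Phi=e^{-\Phi}$ must all be used. I would obtain it by a Pogorelov-type maximum principle. The basic second-order input comes from differentiating $\log\det D^2\Phi=-\Phi$ twice in a fixed direction $v$,
$$
\Phi^{ij}\Phi_{ijvv}-\Phi^{ik}\Phi^{jl}\Phi_{klv}\Phi_{ijv}=-\Phi_{vv},
$$
that is, with $v$ frozen, $L\Phi_{vv}=\Phi^{ik}\Phi^{jl}\Phi_{klv}\Phi_{ijv}-\Phi_{vv}$, the first term on the right being a non-negative ``good'' term. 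One then maximizes an auxiliary quantity such as $\log\Phi_{vv}+\beta\Phi$ for a suitable $\beta$, taking the supremum over unit vectors $v$ afterwards. Since $\mathbb{R}^n$ is not compact, one precedes this with an $\varepsilon$-regularization --- inserting a weight $e^{-\varepsilon\Phi}$ and letting $\varepsilon\to0$, exactly as in the proof of the Corollary above --- and uses the superlinear growth of $\Phi$ to guarantee that the relevant supremum is attained, in the limit, at an interior critical point. There the good term should absorb the gradient term produced by the logarithm, and the remaining terms are controlled by $L\Phi=n$ together with $|\nabla\Phi|_{\mathrm{eucl}}\le R$, which bounds the first-order (drift) part of $L=\Delta-\tfrac12\Phi^k\nabla_k$; this is what should produce the precise constant $2R^2$, hence the factor $2^{n-1}$ in the statement.

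The size of the constant can be understood one-dimensionally: along any line $\Phi$ restricts to a convex function whose first derivative remains in an interval of length at most $2R$ (because $\nabla\Phi(\mathbb{R}^n)\subseteq B_R$), hence whose second derivative is integrable with total mass at most $2R$ --- this is the mechanism forcing the value $2R^2$ in the Hessian bound, and thereby the $2^{n-1}$ in the lemma. That the $\varepsilon$-regularization and the analysis at infinity are genuinely necessary is visible already in the explicit separable examples, where the quantity $e^{-\Phi}|\nabla\Phi|^2$ approaches its supremum only as $|x|\to\infty$ along a coordinate axis rather than at an interior point.
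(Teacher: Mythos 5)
Your first paragraph reproduces exactly the paper's argument: since $\nabla\Phi(x)\in K\subset B_R(0)$ one has $|\nabla\Phi|^2=\Phi^{ij}\Phi_i\Phi_j\le R^2/\mu_1$, and $e^{-\Phi}=\det D^2\Phi\le \mu_1\mu_n^{n-1}$ converts an upper bound $\mu_n\le 2R^2$ on the largest Euclidean eigenvalue of $D^2\Phi$ into the stated inequality. The paper does precisely this and obtains $\mu_n\le 2R^2$ by citation: it is a result of \cite{K_part_I}. So you have isolated the correct key ingredient, and if you allow yourself to quote that second-derivative estimate, your proof is complete and coincides with the paper's.

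As a self-contained argument, however, there is a genuine gap at exactly the step you flag as ``the main obstacle'': the estimate $D^2\Phi\le 2R^2\,\mathrm{Id}$ is never proved, and the Pogorelov-type sketch would not, as written, deliver it. First, the constant $2R^2$ is nowhere derived; the one-dimensional heuristic (the total mass of $\Phi_{\theta\theta}$ along a line is at most $2R$) is an integral bound and cannot by itself yield a pointwise bound. Second, the maximum-principle scheme risks circularity: at a critical point of $\log\Phi_{vv}-\varepsilon\Phi$ the gradient term and the drift term $\tfrac12\Phi^k\nabla_k$ produce multiples of $\Phi^{ij}\Phi_i\Phi_j=|\nabla\Phi|^2_h$, which is precisely the quantity the lemma is trying to bound; the Euclidean bound $|\nabla\Phi|_{\mathrm{eucl}}\le R$ does not control it, since passing from the Euclidean to the Riemannian gradient requires a lower bound on $D^2\Phi$ --- again the content of the estimate being proved. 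One would have to argue separately that these $\varepsilon$-weighted terms vanish in the limit, which is delicate because the regularized maximum points may escape to infinity, where $|\nabla\Phi|^2_h$ grows exponentially (see the cube computation in Section 4.1). Third, even the existence of the regularized maximum requires an a priori growth bound on $\Phi_{vv}$, which you do not supply (the paper's appendix proves such growth bounds for third derivatives, and those proofs themselves use the present lemma as input, so they cannot be borrowed here). In short: correct reduction, identical to the paper's, but the quantitative Hessian bound must either be cited from \cite{K_part_I}, as the authors do, or proved by an argument more complete than the one sketched.
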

 \begin{proof}
 $$
|\nabla \Phi|^2 = \Phi^{ij} \Phi_i \Phi_j
\le \frac{R^2}{\lambda_{\min}},
$$
where $\lambda_{\min}$ is the minimal Euclidean eigenvalue of $D^2 \Phi$.
By the result from \cite{K_part_I} the
maximal eigenvalue $\lambda_{max}$ is estimated by $2 R^2$.
Hence
$$
 e^{-\Phi} = \det D^2 \Phi \le \lambda_{\min} \lambda^{n-1}_{\max} \le  \lambda_{\min} (2 R^2)^{n-1}
$$
and we get the claim.
\end{proof}

 Now we prove yet another estimate which does not depend on the shape of $K$. The proof is based on an application of the maximum principle to the function
 $$
 \log F - \alpha \Phi,
 $$
 where $F=\Phi_i \Phi^i = |\nabla \Phi|^2$
 and $\alpha >1$.

 \begin{proposition}
 \label{nabla-phi-est}
 For every $\alpha>1$
 $$
 |\nabla \Phi|^2 = \Phi_i \Phi^i \le  \frac{\alpha(n+4) n}{(1-\alpha)^2} e^{\alpha(\Phi(x) - m)},
 $$
 where $m = \min_{x \in \mathbb{R}^n} \Phi(x)$.
 \end{proposition}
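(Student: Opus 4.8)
The plan is to run a maximum principle for
$$ G := \log F - \alpha\Phi, \qquad F := \Phi_i\Phi^i = |\nabla\Phi|^2 . $$
Since $K$ is bounded, Lemma~\ref{rough-est-1-ord} gives $F \le C\,e^{\Phi}$ for a constant $C = C(K)$, so $G \le \log C - (\alpha-1)\Phi$; as $\alpha > 1$ and $\Phi(x) \to +\infty$ when $|x|\to\infty$ (a standard consequence of $0\in\operatorname{int} K$), the function $G$ tends to $-\infty$ at infinity and attains a global maximum at some $x_0$, where necessarily $\nabla\Phi(x_0)\neq 0$. I work at $x_0$ in an $h$-orthonormal frame $(e_1,\dots,e_n)$ with $e_n=\nu:=\nabla\Phi/|\nabla\Phi|$, so that $\Phi_\nu=\sqrt F$, $\Phi_{e_i}=0$ for $i<n$, and the metric is the identity in this frame.

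First I extract the first-order information. Differentiating $F=\Phi^{jk}\Phi_j\Phi_k$ and using $\partial_i\Phi^{jk}=-\Phi^{ja}\Phi^{kb}\Phi_{iab}$ and $\Phi^{jk}\Phi_{ij}=\delta_i^k$ yields the identity $\partial_i F = 2\Phi_i - \Phi_{iab}\Phi^a\Phi^b$. The condition $\nabla G(x_0)=0$ reads $\partial_i F=\alpha F\Phi_i$, so at $x_0$ we get $\Phi_{iab}\Phi^a\Phi^b=(2-\alpha F)\Phi_i$; read in the frame the left side is $F\,\Phi_{i\nu\nu}$, hence
$$ \Phi_{\nu\nu\nu}=\frac{2-\alpha F}{\sqrt F}, \qquad \Phi_{e_i\nu\nu}=0 \ \ (i<n). $$
The same relation also gives $\Phi^{ij}(\partial_i F)(\partial_j F)=\alpha^2F^2|\nabla\Phi|^2=\alpha^2F^3$ at $x_0$.

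The crux is a sharp lower bound for $g(\nabla\Phi,\nabla\Phi)$. Contracting the trace identity $\Phi_{iab}\Phi^{ab}=-\Phi_i$ (cf.\ (\ref{eigenf})) with $\nu$ and reading it in the frame gives $\sum_a\Phi_{\nu aa}=-\sqrt F$, whence $\sum_{i<n}\Phi_{\nu e_i e_i}=-\sqrt F-\Phi_{\nu\nu\nu}=\bigl((\alpha-1)F-2\bigr)/\sqrt F$. On the other hand $g(\nabla\Phi,\nabla\Phi)=F\sum_{a,b}\Phi_{\nu ab}^2$, and because the mixed components $\Phi_{\nu\nu e_i}$ $(i<n)$ vanish by the first-order relation, this sum splits as $\Phi_{\nu\nu\nu}^2+\sum_{i,j<n}\Phi_{\nu e_i e_j}^2$. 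Applying Cauchy--Schwarz to the symmetric $(n-1)\times(n-1)$ block $(\Phi_{\nu e_i e_j})_{i,j<n}$, whose trace was just computed, gives
$$ g(\nabla\Phi,\nabla\Phi)\ \ge\ (2-\alpha F)^2+\frac{\bigl((\alpha-1)F-2\bigr)^2}{n-1}. $$
This refinement is what makes the argument close: the naive bound $g(\nabla\Phi,\nabla\Phi)\ge F^2/n$ from a single Cauchy--Schwarz only produces a \emph{lower} bound for $F(x_0)$.

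Finally I use the second-order condition $LG\le 0$ at $x_0$. With $L\Phi=n$, the dimension-free analogue of (L1), namely $LF=2n+3F+g(\nabla\Phi,\nabla\Phi)$ (which follows from Lemma~\ref{lphi} and the Bochner identity exactly as in the two-dimensional case), and $\Phi^{ij}(\partial_iF)(\partial_jF)=\alpha^2F^3$, the inequality $LG=\tfrac{LF}{F}-\tfrac{\Phi^{ij}(\partial_iF)(\partial_jF)}{F^2}-\alpha n\le 0$ becomes $g(\nabla\Phi,\nabla\Phi)\le\alpha^2F^2+\alpha nF-3F-2n$. Combining with the lower bound of the previous paragraph, expanding $(2-\alpha F)^2=\alpha^2F^2-4\alpha F+4$ so that the $\alpha^2F^2$-terms cancel, and dropping the manifestly negative lower-order terms, one reaches $\bigl((\alpha-1)F-2\bigr)^2\le(n-1)\bigl(\alpha(n+4)-3\bigr)F$; expanding and dividing by $F>0$ gives $(\alpha-1)^2F\le(n-1)\bigl(\alpha(n+4)-3\bigr)+4(\alpha-1)=\alpha n(n+3)-3n-1\le\alpha n(n+4)$, so $F(x_0)\le\alpha n(n+4)/(\alpha-1)^2$. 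Since $G(x)\le G(x_0)$ for all $x$ and $\Phi(x_0)\ge m$, this yields $F(x)\le F(x_0)\,e^{\alpha(\Phi(x)-\Phi(x_0))}\le\frac{\alpha(n+4)n}{(1-\alpha)^2}\,e^{\alpha(\Phi(x)-m)}$. The main obstacle is the lower bound on $g(\nabla\Phi,\nabla\Phi)$: one must use the vanishing of the transverse third derivatives at the critical point to decompose it and then feed the trace identity into the block orthogonal to $\nabla\Phi$ — a single Cauchy--Schwarz throws away too much and the estimate fails to close.
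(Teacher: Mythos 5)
Your proposal is correct, and its overall skeleton is exactly the paper's: you maximize $\log F-\alpha\Phi$ (existence via Lemma \ref{rough-est-1-ord} and $\Phi\to\infty$), extract the same first-order identity $\Phi_{iab}\Phi^a\Phi^b=(2-\alpha F)\Phi_i$ and $|\nabla F|^2/F^2=\alpha^2F$, use the same dimension-free formula $LF=2n+3F+g(\nabla\Phi,\nabla\Phi)$, and close with $LG\le 0$. The one place where you genuinely diverge is the crucial lower bound on $g(\nabla\Phi,\nabla\Phi)$, which (as you correctly note) cannot be obtained from a single crude Cauchy--Schwarz. The paper handles it by an algebraic completion of squares with a free parameter: it writes $\Phi_{iab}-\lambda\tfrac{\Phi_{ia}\Phi_b+\Phi_{ib}\Phi_a}{2}$, applies $\mathrm{Tr}\,A^2\ge\tfrac1n(\mathrm{Tr}\,A)^2$ to the full modified matrix together with (T1), evaluates the cross term by the first-order condition, and then optimizes $\lambda=-\alpha$. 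You instead work in the $h$-orthonormal frame adapted to $\nabla\Phi$, observe that the critical-point condition kills the mixed components $\Phi_{\nu\nu e_i}$ and pins $\Phi_{\nu\nu\nu}=(2-\alpha F)/\sqrt F$ exactly, and apply Cauchy--Schwarz only to the transverse $(n-1)\times(n-1)$ block whose trace is supplied by (T1). The two devices are equivalent in spirit (both are ``efficient'' Cauchy--Schwarz exploiting the first-order information), but yours is more geometric and transparent about where the gain comes from, while the paper's $\lambda$-trick avoids any frame decomposition; your version even yields the marginally sharper interim constant $\alpha n(n+3)-3n-1\le\alpha n(n+4)$, so the stated bound follows a fortiori. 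All the intermediate algebra in your write-up checks out.
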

 \begin{proof}
 Since $\lim_{x \to \infty} \Phi(x) = + \infty$, it follows from lemma \ref{rough-est-1-ord}
 that the function
 $ \log F - \alpha \Phi$ attains its maxumum at some point  $x_0$. The following relations hold
 at this point:
 \begin{equation}
 \label{1order}
 \frac{F_i}{F} = \alpha \Phi_i,
 \end{equation}
 \begin{equation}
 \label{2order}
 \frac{L F}{F} -  \frac{|\nabla F|^2}{F^2} \le \alpha n.
 \end{equation}
 Taking into account the first equation we get $\frac{|\nabla F|^2}{F^2} = \alpha^2 |\nabla \Phi|^2 = \alpha^2 F$. Applying
 the formula $LF = 2n + 3 F + g_{ij} \Phi^i \Phi^j$, proven as in (L1) above, we get
 \begin{equation}
 \label{maxprinc}
 \frac{2n + 3 F + g_{ij} \Phi^i \Phi^j}{F} \le \alpha^2 F + \alpha n.
 \end{equation}
 Let us estimate $g_{ij} \Phi^i \Phi^j$. For every number $\lambda$ one has the following identity:
 \begin{align*}
 g_{ij} \Phi^i \Phi^j & = \Phi_{iab} \Phi_{j}^{ab} \Phi^i \Phi^j
 = \Bigl( \Phi_{iab} - \lambda \frac{\Phi_{ia} \Phi_b + \Phi_{ib} \Phi_a}{2}\Bigr)
 \Bigl( \Phi_{j}^{ab} - \lambda \frac{\Phi_{j}^{a} \Phi^b + \Phi_{j}^{b} \Phi^a}{2}\Bigr)  \Phi^i \Phi^j
 \\&
 + \lambda (\Phi_{ia} \Phi_b + \Phi_{ib} \Phi_a) \Phi_{j}^{ab} \Phi^i \Phi^j
 - \lambda^2  \Bigl(\frac{\Phi_{ia} \Phi_b + \Phi_{ib} \Phi_a}{2} \Bigr) \Bigl( \frac{\Phi_{j}^{a} \Phi^b + \Phi_{j}^{b} \Phi^a}{2}\Bigr) \Phi^i \Phi^j
 \end{align*}
 Note that the last term in the right hand side  equals $-\lambda^2 F^2$. To estimate the first term we use the inequality
 $Tr A^2 \ge \frac{1}{n} (Tr A)^2$ in the form $T_{ia}^b T_{j b}^a \geq T_{ia}^a T_{j b}^n / n$ in the sense of
 symmetric $2$-tensors, as well as the relation
  $$ \Phi^{ab} \bigl(\Phi_{iab} - \lambda \frac{\Phi_{ia} \Phi_b + \Phi_{ib} \Phi_a}{2}\bigr) = - (1+ \lambda) \Phi_i.$$
  To estimate the second term let us use
  (\ref{1order}): $\alpha F  \Phi_i = 2( \Phi_i - \frac{1}{2} \Phi_{iab} \Phi^a \Phi^b)$
and hence $\Phi_{i ab} \Phi^a \Phi^b = (2 - \alpha F) \Phi_i$.  Thus
  $$
   \lambda (\Phi_{ia} \Phi_b + \Phi_{ib} \Phi_a) \Phi_{j}^{ab} \Phi^i \Phi^j =2 \lambda (2- \alpha F)F.
  $$
  Finally, one gets
  $$
   g_{ij} \Phi^i \Phi^j  \ge 2 \lambda (2- \alpha F)F - \lambda^2 F^2 + \frac{1}{n} (1+ \lambda)^2 F^2
  $$
 Together with (\ref{maxprinc}) this implies
 $$
  \alpha n \ge   \frac{2n}{F} + 3 +  4 \lambda
  + F \Bigl(\frac{1}{n} (1+ \lambda)^2 - (\lambda + \alpha)^2  \Bigr).
 $$
 Set $\lambda = - \alpha$. We get
 $$
 F \le  \frac{\alpha(n+4) n}{(1-\alpha)^2}.
 $$
 This implies
 $$
 \log F - \alpha \Phi \le \log \Bigl[  \frac{\alpha(n+4) n}{(1-\alpha)^2}\Bigr]  - \alpha \min_{x} \Phi(x)
 $$
  and the result follows by the maximum principle.
\end{proof}

Since
$$
\Phi_i \Phi^i =  \sup_{\in \mathbb{R}^n: |v| =1}\Phi^2_v
= \sup_{e \in \mathbb{R}^n}\frac{\Phi^2_e}{\Phi_{ee}},
$$
where $|v|$ stand for the Riemannian length of the tangent vector $v$, we get immediately the following corollary.

\begin{corollary}
For every $e \in \mathbb{R}^n$ and $\alpha>1$ one has
$$
\Phi^2_e \le \Phi_{ee}  \frac{\alpha(n+4) n}{(1-\alpha)^2} e^{\alpha(\Phi(x) - m)}.
$$
\end{corollary}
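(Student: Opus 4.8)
The plan is to deduce this corollary directly from Proposition~\ref{nabla-phi-est}, so the only thing that genuinely needs justification is the chain of (in)equalities stated just before it, namely that the Riemannian squared gradient norm satisfies
$$ |\nabla \Phi|^2 = \Phi_i \Phi^i = \sup_{e \in \RR^n} \frac{\Phi_e^2}{\Phi_{ee}}. $$
First I would fix a point $x$ and note that in the Hessian metric $h = \Phi_{ij}\,dx^i dx^j$ the coordinate direction $e \neq 0$, viewed as a tangent vector, has squared Riemannian length $h(e,e) = \Phi_{ij}e^i e^j = \Phi_{ee}$, while the directional derivative is $d\Phi(e) = \Phi_i e^i = \Phi_e$. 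Introducing the vector $w$ with $w^i = \Phi^{ij}\Phi_j$, a one-line index computation using $\Phi_{kl}\Phi^{lj} = \delta_k^j$ gives $h(e,w) = \Phi_e$ and $h(w,w) = \Phi^{ij}\Phi_i\Phi_j = \Phi_i\Phi^i = |\nabla \Phi|^2$.

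Next I would invoke the Cauchy--Schwarz inequality for the positive-definite form $h$ (equivalently, positive-definiteness of $(\Phi_{ij})$ applied to $e$ and $w$):
$$ \Phi_e^2 = h(e,w)^2 \le h(e,e)\, h(w,w) = \Phi_{ee} \cdot |\nabla \Phi|^2, $$
with equality precisely when $e$ is proportional to $w$. Dividing by $\Phi_{ee} > 0$ and taking the supremum over $e$ recovers the displayed identity; in particular $\Phi_e^2 \le \Phi_{ee}\,|\nabla \Phi|^2$ holds for every Euclidean vector $e$ (the case $e = 0$ being trivial).

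Finally I would substitute the estimate of Proposition~\ref{nabla-phi-est}, valid for every $\alpha > 1$, namely $|\nabla \Phi|^2 \le \alpha(n+4)n(1-\alpha)^{-2} e^{\alpha(\Phi(x)-m)}$ with $m = \min_{\RR^n}\Phi$, into the last inequality to obtain
$$ \Phi_e^2 \le \Phi_{ee}\, \frac{\alpha(n+4)n}{(1-\alpha)^2}\, e^{\alpha(\Phi(x)-m)}, $$
which is the claim. I do not expect any real obstacle here: the substantive work — the maximum-principle argument controlling $\log F - \alpha \Phi$ for $F = |\nabla \Phi|^2$ — is already carried out in Proposition~\ref{nabla-phi-est}, and the step above is just the elementary remark that $\Phi_{ee}$ is the squared $h$-length of the direction $e$, together with Cauchy--Schwarz.
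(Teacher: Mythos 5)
Your argument is correct and is essentially the paper's own: the identity $|\nabla \Phi|^2 = \sup_{e} \Phi_e^2/\Phi_{ee}$ stated just before the corollary is exactly your Cauchy--Schwarz computation in the metric $h$, and both proofs then conclude by plugging in Proposition \ref{nabla-phi-est}. You merely spell out the elementary linear-algebra step that the paper leaves implicit.
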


\subsection{Third-order estimates}

\begin{lemma}
\label{LfAB}
Asume that $f$ is a non-negative smooth function on $\RR^n$ satisfying the inequality
$$
Lf \ge A f + B f^2
$$
for some $A,B>0$.
Then for every $\alpha>1$ the following inequality holds:
$$
f \le \frac{1}{B} \bigl[  (\alpha n - A)_{+} +  \frac{\alpha^3(n+4) n}{(1-\alpha)^2}) \bigr] e^{\alpha(\Phi(x) - m)}.
$$
\end{lemma}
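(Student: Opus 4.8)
The plan is to run, for the function $G := \log f - \alpha \Phi$, the same maximum-principle argument used in the proof of Proposition~\ref{nabla-phi-est}. First I would dispose of the trivial case $f \equiv 0$ and then work on the open set $\{f>0\}$, where $G$ is smooth and tends to $-\infty$ along sequences approaching the boundary. Granting a crude a priori bound on $f$, the hypothesis $\lim_{x\to\infty}\Phi(x)=+\infty$ forces $G$ to attain its supremum at some interior point $x_0$ of $\{f>0\}$, exactly as Lemma~\ref{rough-est-1-ord} is used to locate the maximum in Proposition~\ref{nabla-phi-est}. Securing this a priori bound is the one place where work is needed; the remainder is the algebra carried out at $x_0$.

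At $x_0$, vanishing of $dG$ gives $\nabla f = \alpha f\,\nabla\Phi$, hence $|\nabla f|^2/f^2 = \alpha^2|\nabla\Phi|^2$ with lengths measured in the Hessian metric. Since $L$ is the elliptic operator $\Phi^{ij}\partial_i\partial_j$ with no zeroth-order term, the Hessian of $G$ being negative semidefinite at the maximum gives $LG\le 0$; writing this out,
$$ \frac{Lf}{f} - \frac{|\nabla f|^2}{f^2} - \alpha\, L\Phi \le 0, $$
and using $L\Phi = \Phi^{ij}\Phi_{ij} = n$ together with the first-order relation we get $Lf/f \le \alpha^2|\nabla\Phi|^2 + \alpha n$ at $x_0$. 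Feeding in the hypothesis $Lf \ge Af + Bf^2$, i.e. $Lf/f \ge A + Bf$, yields
$$ B\,f(x_0) \le (\alpha n - A) + \alpha^2\,|\nabla\Phi(x_0)|^2. $$

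Next I would invoke Proposition~\ref{nabla-phi-est} at $x_0$, namely $|\nabla\Phi(x_0)|^2 \le \frac{\alpha(n+4)n}{(1-\alpha)^2}e^{\alpha(\Phi(x_0)-m)}$, and divide the last inequality by $e^{\alpha(\Phi(x_0)-m)}$. Since $\Phi(x_0)\ge m$ we have $0 < e^{-\alpha(\Phi(x_0)-m)}\le 1$, so $(\alpha n - A)\,e^{-\alpha(\Phi(x_0)-m)} \le (\alpha n - A)_+$, and therefore
$$ B\,f(x_0)\,e^{-\alpha(\Phi(x_0)-m)} \le (\alpha n - A)_+ + \frac{\alpha^3(n+4)n}{(1-\alpha)^2}. $$
Finally, since $x_0$ maximizes $G = \log\!\bigl(f e^{-\alpha\Phi}\bigr)$ on $\{f>0\}$ while $f e^{-\alpha\Phi}$ vanishes elsewhere, we have $f(x)e^{-\alpha\Phi(x)} \le f(x_0)e^{-\alpha\Phi(x_0)}$ for all $x\in\RR^n$; combining this with the displayed estimate and multiplying through by $e^{\alpha\Phi(x)}$ gives precisely the asserted inequality. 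As indicated, the hard part is not this computation — which merely transcribes the one already done for $F=|\nabla\Phi|^2$ — but the preliminary rough bound on $f$ needed to know that $\log f - \alpha\Phi$ realizes its maximum.
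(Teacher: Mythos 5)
Your algebra at the maximum point is correct and coincides with the second half of the paper's own proof: the first-order condition $\nabla f=\alpha f\,\nabla\Phi$, the second-order inequality $\frac{Lf}{f}-\frac{|\nabla f|^2}{f^2}\le\alpha L\Phi=\alpha n$ (legitimate because $L=\Phi^{ij}\partial_i\partial_j$ has no lower-order terms in the Euclidean coordinates), the substitution of $Lf\ge Af+Bf^2$ and of Proposition \ref{nabla-phi-est} at the point $x_0$, and the propagation $f(x)e^{-\alpha\Phi(x)}\le f(x_0)e^{-\alpha\Phi(x_0)}$; your treatment of the factor $(\alpha n-A)_+$ is even more explicit than the paper's ``one easily gets the desired inequality.'' The genuine gap is precisely the step you label ``the one place where work is needed'' and then do not carry out: an a priori growth bound on $f$ guaranteeing that $\log f-\alpha\Phi$ attains its maximum. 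Nothing in the hypotheses $f\ge 0$, $Lf\ge Af+Bf^2$ rules out a priori that $f$ grows faster than $e^{\alpha\Phi}$, in which case $\log f-\alpha\Phi$ need not be bounded above, no interior maximum point $x_0$ exists, and the computation never starts. Since the lemma is then applied to functions (the third-derivative maximum and $\|g\|$) whose growth is exactly what one is trying to control, this cannot be treated as a formality.

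The paper spends the first half of its proof on this point: it establishes the rough bound (\ref{rough-fcphi}), $f\le c\,e^{\alpha\Phi}$, by applying the maximum principle to the auxiliary function $f(R-\Phi)^2$ on the compact sublevel set $\{\Phi\le R\}$ with $R=(1+\varepsilon)\Phi(x_0)$, where the vanishing boundary weight forces an interior maximum; at that maximum the relations $\frac{\nabla f}{f}=\frac{2\nabla\Phi}{R-\Phi}$ and $L\Phi=n$ combined with the differential inequality give $A+Bf\le 6\frac{|\nabla\Phi|^2}{(R-\Phi)^2}+\frac{2n}{R-\Phi}$, and the crude, $K$-dependent gradient estimate of Lemma \ref{rough-est-1-ord} bounds $|\nabla\Phi|^2$ by $c(n,K)e^{R}$ on this set, yielding the claimed exponential bound. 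Only after this is in place does the unconstrained maximum of $\log f-\alpha\Phi$ exist and your argument become valid. As written, your proposal reproduces the routine half of the paper's proof and omits the half that requires the actual idea (the localization by $f(R-\Phi)^2$ together with Lemma \ref{rough-est-1-ord}); it is therefore incomplete.
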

\begin{proof}
Let us show first that there exists a number $c>0$ such that
\begin{equation}
\label{rough-fcphi}
f \le c e^{\alpha \Phi}.
\end{equation}
Fix a point $x_0$ where $\Phi(x_0)>1$  and set $R = (1+\varepsilon) \Phi(x_0)$.
We are looking for a maximum of the function
$$
f (R - \Phi)^2
$$
on the set $\{ \Phi \le R\}$. Applying the standard maximum principle, we get
$$
\frac{\nabla f}{f}
-2 \frac{\nabla \Phi}{R-\Phi} =0,
$$
$$
\frac{L f}{f}  -
\Bigl|\frac{\nabla  f}{f} \Bigr|^2 - 2 \frac{|\nabla \Phi|^2}{(R -\Phi)^2} \le 2  \frac{L\Phi}{R -\Phi}.
$$
Applying  the relation $L \Phi=n$, we get
\begin{equation}
\label{maxpr-cphi}
A + B f \le 6 \frac{|\nabla \Phi|^2}{(R -\Phi)^2} + \frac{2n}{R-\Phi}.
\end{equation}
Hence by  Lemma \ref{rough-est-1-ord}
$$
B f (R- \Phi)^2
\le c(n,K) e^{R} + 2n(R - \Phi) \le \tilde{C}(n, K)  e^R
$$
Using that $R = (1+\varepsilon) \Phi(x_0)$
we obtain (\ref{rough-fcphi}).

To get the claim let us apply the maximum principle to the function
$$
\log f - \alpha\Phi.
$$
Indeed, one has at the maximum point
$$
\frac{f_i}{f} = \alpha \Phi_i
$$
$$
\frac{L f}{f} - \frac{|\nabla f|^2}{f^2} \le  \alpha n.
$$
Then it follows from the assumption of the lemma and Proposition
 \ref{nabla-phi-est} that
$$
A+ B f \le   \alpha n +   \alpha^2  |\nabla \Phi|^2
\le  \alpha n +  \frac{\alpha^3(n+4) n}{(1-\alpha)^2} e^{\alpha(\Phi(x) - m)}.
$$
and one easily gets the desired inequality.
\end{proof}

  \begin{lemma}
  \label{3formvariation}
Let $T \colon (\mathbb{R}^n)^3 \to \mathbb{R}$ be a trilinear  form
which is symmetric
with repect to any permutation of the arguments.
There exists a unit vector $v$ such that
$$
T(v,v,v) =\sup_{e_1, e_2, e_3} T(e_1, e_2, e_3) \ge 0,
$$
where the supremum is taken over all unit vectors $e_1, e_2, e_3$.

In addition, for every unit vector $a \bot v$
$$
T(a,v,v)=0,
$$
$$
 T(v,v,v) \ge 2 T(v,a,a).
$$
  \end{lemma}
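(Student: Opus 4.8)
The statement is a routine fact about symmetric trilinear forms, and the plan is to obtain $v$ from a compactness argument and then differentiate the optimality condition.

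\medskip

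\textbf{Step 1: Existence of the maximizing vector.} The unit sphere $S^{n-1} \subset \RR^n$ is compact, so the continuous function $e \mapsto T(e,e,e)$ attains its maximum at some unit vector $v$. I would first check that this diagonal maximum coincides with the supremum of $T(e_1,e_2,e_3)$ over all triples of unit vectors. One inclusion is trivial. For the reverse, given any unit vectors $e_1,e_2,e_3$, multilinearity and symmetry let me expand $T(e_1+e_2+e_3, e_1+e_2+e_3, e_1+e_2+e_3)$ as a sum of the three diagonal terms $T(e_i,e_i,e_i)$ plus $6 T(e_1,e_2,e_3)$ plus the mixed terms $3T(e_i,e_i,e_j)$; a cleaner route is to use that a symmetric trilinear form is determined by its diagonal (polarization), so $6 T(e_1,e_2,e_3)$ is an alternating-sign combination of the eight values $T(\sum_{i \in I} e_i, \cdot, \cdot)$ for $I \subseteq \{1,2,3\}$, each of which, after normalizing the (nonzero) argument, is bounded by $\|w\|^3 \max_{e} T(e,e,e)$ with $\|w\| \le 3$. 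This gives a crude bound; alternatively, and more simply, I can just observe that the statement only uses the diagonal maximizer $v$ and the inequality $T(v,v,v) \ge 0$ follows because $T(-v,-v,-v) = -T(v,v,v)$, so the maximum over the sphere is automatically nonnegative. I will present whichever version is shortest. The identification with the full supremum over triples is the one slightly fussy point, but it is classical.

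\medskip

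\textbf{Step 2: First-order optimality.} Fix a unit vector $a \perp v$ and consider the curve $\gamma(t) = (\cos t) v + (\sin t) a$ on $S^{n-1}$, with $\gamma(0) = v$, $\gamma'(0) = a$. The function $\psi(t) = T(\gamma(t),\gamma(t),\gamma(t))$ has a maximum at $t = 0$, so $\psi'(0) = 0$. By multilinearity and symmetry, $\psi'(0) = 3 T(a, v, v)$, hence $T(a,v,v) = 0$. This is the first claimed identity.

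\medskip

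\textbf{Step 3: Second-order optimality.} From $\psi''(0) \le 0$: differentiating $\psi(t) = T(\gamma,\gamma,\gamma)$ twice and using $\gamma''(0) = -v$, I get
\[
\psi''(0) = 6\, T(\gamma'(0),\gamma'(0),\gamma(0)) + 3\, T(\gamma''(0),\gamma(0),\gamma(0)) = 6\, T(a,a,v) - 3\, T(v,v,v).
\]
Thus $6 T(v,a,a) - 3 T(v,v,v) \le 0$, i.e. $T(v,v,v) \ge 2 T(v,a,a)$, which is the last claim. The main (and only mild) obstacle is simply bookkeeping the multilinear expansions correctly — in particular getting the combinatorial factors $6$ and $3$ right and remembering that the cross term $T(\gamma',\gamma',\gamma)$ already accounts for the three positions of the argument $\gamma$. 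No deeper difficulty arises; the whole lemma is elementary calculus on the sphere.
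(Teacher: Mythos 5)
Your Steps 2 and 3 are correct and are essentially the paper's own argument: the paper expands $T(\cos\varepsilon\, v+\sin\varepsilon\, a,\cdot,\cdot)$ to second order in $\varepsilon$ and reads off exactly your conditions $3T(a,v,v)=0$ and $T(v,v,v)\ge 2T(v,a,a)$; the sign-flip observation $T(-v,-v,-v)=-T(v,v,v)$ for nonnegativity is also fine.

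The genuine gap is in Step 1. The lemma does not merely assert the existence of a diagonal maximizer; it asserts the identity
$$\max_{|e|=1}T(e,e,e)=\sup_{|e_1|=|e_2|=|e_3|=1}T(e_1,e_2,e_3),$$
and you never prove it. The polarization route you sketch cannot: it expresses $6T(e_1,e_2,e_3)$ as a signed sum of diagonal values $T(w_I,w_I,w_I)$ with $\|w_I\|\le 3$, which after normalization gives an upper bound with a constant strictly larger than $1$, not equality. Your fallback (``the statement only uses the diagonal maximizer'') replaces the lemma by a weaker one; that weaker version does suffice for the paper's later application to $f(x)=\max_{|e|=1}\Phi_{eee}(x)$, but it is not what is claimed, and calling the full identity ``classical'' is not a proof. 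The paper closes this point with a short spectral argument: take a maximizing triple $(e_1^0,e_2^0,e_3^0)$ for the full supremum and freeze the first argument; the symmetric bilinear form $(v,w)\mapsto T(e_1^0,v,w)$ attains its maximum over unit vectors at $v=\pm w$ equal to an eigenvector of the largest-modulus eigenvalue, so $e_2^0=\pm e_3^0$, and repeating the argument in the other slots gives $e_1^0=\pm e_2^0=\pm e_3^0$; after flipping signs one obtains a single unit vector $v$ with $T(v,v,v)$ equal to the supremum over triples. You need to add this (or an equivalent) step for your proof to establish the stated lemma.
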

  \begin{proof} Let $e_1^0, e_2^0, e_3^0 \in S^{n-1}$ be such that
  $$
  \sup_{e_1, e_2, e_3} T(e_1, e_2, e_3)  = T(e^0_1, e^0_2, e^0_3).
  $$
We claim that $e^0_2 = \pm e_0^3$.
Indeed, by linear algebra, the supremum of the symmetric quadratic form
  $$
  (v,w) \to T(e^0_1, v, w)
  $$
  over unit vectors $v, w$ is the largest modulus of an eigenvalue and necessarily $v=\pm w$ are corresponding eigenvectors.
  Hence $e_0^2 = \pm e_0^3$. By the same argument $e^0_1 = \pm e^0_2 = \pm e^0_3$.
  It is clear that replacing some of $e^0_1, e^0_2, e^0_3$ by $-e^0_1,- e^0_2, -e^0_3$
  if necessary, we may chose $v$ satifying $T(e^0_1, e^0_2, e^0_3) = T(v,v,v)$.

  Next we note that for every $\varepsilon \in \mathbb{R}$ and $a \bot v$,
 \begin{align*}
   T(v,v,v) \ge   & T(\cos \varepsilon \, v + \sin \varepsilon \, a,\cos \varepsilon \, v + \sin \varepsilon \, a,\cos \varepsilon \, v + \sin \varepsilon \, a)
   \\&
   =T(v,v,v) +  3 \varepsilon T(a,v,v) + 3\varepsilon^2\bigl( T(v,a,a) - \frac{1}{2} T(v,v,v) \bigr) + o(\varepsilon^2).
  \end{align*}
  The claim follows by the maximality property of the unit vector $v$.
  \end{proof}

In what follows we apply the following formula for the Laplacian
of the 3-linear form $\Phi_{iab}$ (see  Lemma \ref{Lphi3}):
   $$
L \Phi_{iab}  =\frac{1}{2} \Phi_{iab}
- \frac{1}{2} \Phi^{l}_{ik} \Phi^{m}_{al} \Phi^{k}_{bm}
+ \frac{1}{4} \Bigl( g^{k}_{i} \Phi_{kab} +  g^{k}_{a} \Phi_{kib} +  g^{k}_{b} \Phi_{kia} \Bigr).
$$

\begin{proposition}
The function
\begin{equation}
f(x) =  \max_{e: |e|=1} \Phi_{eee}(x) 
\label{eq_354} \end{equation}
satisfies the following inequality
$$
    Lf  \ge
    \frac{1}{2} f + \frac{1}{4}  f^3.
    $$
\end{proposition}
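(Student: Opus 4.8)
The plan is to combine the formula for $L\Phi_{iab}$ from Lemma~\ref{Lphi3} with the structural information about the maximizing direction provided by Lemma~\ref{3formvariation}, dealing with the non-smoothness of $f$ (a maximum of smooth functions, hence only locally Lipschitz) by a standard barrier argument. Fix $x_0 \in \RR^n$. Applying Lemma~\ref{3formvariation} to the symmetric trilinear form $\Phi_{iab}(x_0)$, we obtain a vector $v$ of unit $h$-length with $\Phi_{vvv}(x_0) = f(x_0) \ge 0$, such that $\Phi_{avv}(x_0) = 0$ and $\Phi_{vvv}(x_0) \ge 2\Phi_{vaa}(x_0)$ for every unit $a \perp v$. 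Extend $v$ to a vector field $V$ near $x_0$ by parallel transport along the $h$-geodesics emanating from $x_0$, so that $\nabla V = 0$ at $x_0$ and $|V|_h \equiv 1$. Put $\phi(x) = \Phi_{iab}(x) V^i V^a V^b$. Then $\phi$ is smooth, $\phi \le f$ near $x_0$ with equality at $x_0$, so it suffices to prove $L\phi(x_0) \ge \tfrac12 f(x_0) + \tfrac14 f(x_0)^3$; this is precisely the form of the inequality needed for the maximum-principle applications (it is the sense in which $Lf \ge \tfrac12 f + \tfrac14 f^3$ is to be read at points that are not smoothness points of $f$).

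Next I would reduce $L\phi(x_0)$ to an expression involving $\Phi_{iab}$ only. Differentiating $|V|_h^2 = \Phi_{ij}V^i V^j \equiv 1$ twice and using $\nabla\Phi_{ij} = 0$ together with $\nabla V|_{x_0} = 0$ gives $\langle \Delta V, V\rangle_h = 0$ at $x_0$, where $\Delta V$ is the rough Laplacian; hence $\Delta V(x_0) \perp v$. Expanding the Hessian of the scalar $\phi$ by the Leibniz rule at $x_0$ and discarding the (vanishing) terms containing a factor $\nabla V$, one finds $L\phi(x_0) = (L\Phi_{iab})v^i v^a v^b + 3\,\Phi_{iab}(\Delta V)^i v^a v^b$, and the last term vanishes because $\Delta V(x_0) \perp v$ while $\Phi_{avv}(x_0) = 0$ for every $a \perp v$. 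Thus $L\phi(x_0) = (L\Phi_{iab}) v^i v^a v^b$, which I would then compute via Lemma~\ref{Lphi3}.

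For the algebraic step I would work in an $h$-orthonormal frame $(v = e_1, e_2, \dots, e_n)$ at $x_0$, so indices are raised with the identity, and record that $\Phi_{vvj} = \Phi_{jvv} = 0$ for $j \ge 2$ by Lemma~\ref{3formvariation}. Let $M$ be the symmetric matrix with entries $M_{jk} = \Phi_{vjk}$, $j,k \ge 2$, with eigenvalues $\mu_1,\dots,\mu_{n-1}$. Contracting the three terms of Lemma~\ref{Lphi3} with $v^i v^a v^b$: the first term gives $\tfrac12 f$; the three terms involving $g$ each give $g_{vv} f$ (using $\Phi_{jvv} = 0$), for a total of $\tfrac34 g_{vv} f$, and $g_{vv} = \sum_{a,b}\Phi_{vab}^2 = f^2 + \mathrm{tr}(M^2)$; and the cubic term $-\tfrac12\Phi^{l}_{ik}\Phi^{m}_{al}\Phi^{k}_{bm} v^i v^a v^b$ equals $-\tfrac12\bigl(f^3 + \mathrm{tr}(M^3)\bigr)$, since in the triple sum the only index patterns surviving the vanishing of the components $\Phi_{vvj}$ are $k = l = m = v$ and the block $k,l,m \ge 2$. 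Collecting everything,
\[
(L\Phi_{iab}) v^i v^a v^b = \tfrac12 f + \tfrac14 f^3 + \tfrac12 \sum_{j} \mu_j^2\Bigl( \tfrac32 f - \mu_j \Bigr).
\]
Finally, each $\mu_j$ equals $\Phi_{vaa}(x_0)$ for the corresponding unit eigenvector $a \perp v$, so $\mu_j \le \tfrac12 f$ by Lemma~\ref{3formvariation}; hence $\tfrac32 f - \mu_j \ge f \ge 0$, the sum is nonnegative, and $L\phi(x_0) \ge \tfrac12 f(x_0) + \tfrac14 f(x_0)^3$.

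The computations are routine; the delicate point is the passage from the non-smooth $f$ to the barrier $\phi$, and within that the vanishing of the second-order contribution $3\,\Phi_{iab}(\Delta V)^i v^a v^b$ — this is where both the identity $\Phi_{avv}(x_0) = 0$ for $a \perp v$ from Lemma~\ref{3formvariation} and the constraint $\Delta V(x_0) \perp v$ forced by $|V|_h \equiv 1$ are used essentially. Everything else is linear algebra built on Lemma~\ref{Lphi3} and on the inequalities $\Phi_{avv}(x_0)=0$, $\Phi_{vaa}(x_0) \le \tfrac12 f$, $f \ge 0$ supplied by Lemma~\ref{3formvariation}.
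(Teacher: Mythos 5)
Your proposal is correct and follows essentially the same route as the paper: the barrier $\Phi_{VVV}$ built from a unit extension of the maximizing direction of Lemma \ref{3formvariation}, the contraction of Lemma \ref{Lphi3} with $v\otimes v\otimes v$, and the inequalities $\Phi_{avv}=0$, $\Phi_{vaa}\le \tfrac12 f$ to control the quadratic and cubic terms. The only (harmless) differences are cosmetic: the paper invokes an extension with $\nabla v=0$ and $\Delta v=0$ at $x_0$ (citing Chow--Knopf) and phrases the barrier step as the maximum principle, whereas you build the extension by radial parallel transport and dispose of the $\Delta V$ contribution via $\Delta V(x_0)\perp v$, which amounts to the same computation.
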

\begin{proof}
Fix a point $x_0$.
According to the previous Lemma
$$
f(x_0) = \max_{e: |e|=1} \Phi_{eee}(x_0) =  \Phi_{vvv}(x_0)
$$
for some unit  vector $v \in TM(x_0)$.
Extend $v$ locally to a unit vector field $v(x)$ with the properties
 \begin{equation}
 \label{nabladelta}
  \nabla v=0, \ \Delta v=0
  \end{equation}
  at $x_0$  (see, for instance,  Theorem 4.6 of \cite{CK}). Clearly, $f (x) \ge   \Phi_{vvv}(x)$ and $f (x_0) =  \Phi_{vvv}(x_0)$.
  By the maximum principle
  $$
  L f(x_0)  \ge L \Phi_{vvv}(x)|_{x=x_0}.
  $$
  Applying (\ref{Lphi3}) and (\ref{nabladelta})
   one gets at $x_0$ (thus in any other point)
   $$
   L f \ge \Bigl[ \frac{1}{2} \Phi_{iab}
- \frac{1}{2} \Phi^{l}_{ik} \Phi^{m}_{al} \Phi^{k}_{bm}
+ \frac{1}{4} \Bigl( g^{k}_{i} \Phi_{kab} +  g^{k}_{a} \Phi_{kib} +  g^{k}_{b} \Phi_{kia} \Bigr) \Bigr] v^i v^a v^b.
   $$
   By the previous Lemma
   $\Phi_{avv}=0$  for every unit $a \bot v$.
   Hence
   $$
   Lf \ge \frac{1}{2} f + \frac{3}{4} g_{vv} f - \frac{1}{2} \sum_{i=1}^{n} \Phi^3_{v v_i v_i},
   $$
   where $v_i, 1 \le i \le n$ is an orthogomal frame  chosen in such a way that
  the quadratic form $(u,w) \to \Phi_{v u w}$ is diagonal in this frame and $v_1=v$.
  Note that
  $$
  g_{vv} = \sum_{i=1}^n \Phi^2_{v v_i v_i} = f^2 +  \sum_{i>1}^n \Phi^2_{v v_i v_i}.
  $$
  By the previous Lemma $f = \Phi_{vvv} \geq 2 \Phi_{v v_i v_i}$ for any $i > 1$, hence,
  $$
  f g_{vv} \ge f^3 + 2 \sum_{i>1}^n \Phi^3_{v v_i v_i}.
  $$
  Substituting this into the inequality for $Lf$
  $$
    Lf \ge \frac{1}{2} f + \frac{1}{2} g_{vv} f  + \frac{1}{4} (f^3 + 2 \sum_{i>1}^n \Phi^3_{v v_i v_i}) - \frac{1}{2} \sum_{i=1}^{n} \Phi^3_{v v_i v_i} \ge
    \frac{1}{2} f + \frac{1}{2} g_{vv} f  - \frac{1}{4} f^3.
  $$
  Since $f g_{vv} / 2 \geq f^3 / 2$ we obtain the claim.
\end{proof}

   In particular, it follows from this proposition that $f$ from (\ref{eq_354}) satisfies $ L( f^2 ) \ge \frac{1}{2}  f^4$.
   We immediately get from Lemma \ref{LfAB}
   \begin{corollary}
   \label{three-est}
   For every $\alpha>1$,
   $$
   \max_{e: |e|=1} \Phi^2_{eee} \le  c\frac{\alpha^3 n^2}{(1-\alpha)^2} e^{\alpha(\Phi(x) - m)}.
   $$
   where $c > 0$ is a universal constant.
 Equivalently, for any triplet of vectors $u,v,w$ (not necessary unit ones)
 $$
 \Phi^2_{uvw} \le  c \Phi_{uu} \Phi_{vv} \Phi_{ww} \frac{\alpha^3 n^2}{(1-\alpha)^2} e^{\alpha(\Phi(x) - m)}.
 $$
   \end{corollary}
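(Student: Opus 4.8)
The plan is to feed the differential inequality for $f$ just proved into Lemma~\ref{LfAB} after squaring, and then to translate the resulting pointwise bound on the maximal directional third derivative into a bound for an arbitrary triple of vectors via Lemma~\ref{3formvariation}.

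First I would square $f$. By Lemma~\ref{3formvariation}, applied to the symmetric trilinear form $(a,b,c)\mapsto \Phi_{ijk}a^ib^jc^k$ on $h$-unit vectors, the function $f=\max_{|e|=1}\Phi_{eee}$ is nonnegative, and $g:=f^2=\max_{|e|=1}\Phi_{eee}^2$. Using the product rule $L(f^2)=2fLf+2|\nabla f|^2$ together with the preceding Proposition ($Lf\ge\frac12 f+\frac14 f^3$) and $f\ge0$, one gets
$$ Lg \ \ge\ 2 f\Bigl(\tfrac12 f+\tfrac14 f^3\Bigr) \ =\ f^2+\tfrac12 f^4 \ =\ g+\tfrac12 g^2 . $$
Since $f$ is only a maximum of smooth functions, this inequality (exactly like the one for $f$ itself) is meant in the barrier sense: at a given point the smooth function $\Phi_{vvv}^2$, with $v$ the maximizing vector extended as in the proof of the Proposition, touches $g$ from below, and the computation above applies to it because $\Phi_{vvv}\ge 0$ there.

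Next I would apply Lemma~\ref{LfAB} to $g$ with $A=1$ and $B=\frac12$; its maximum-principle proof goes through for functions that are maxima of smooth functions via the same barrier device. This yields, for every $\alpha>1$,
$$ \max_{|e|=1}\Phi_{eee}^2 \ =\ g \ \le\ 2\Bigl[(\alpha n-1)_+ + \frac{\alpha^3(n+4)n}{(1-\alpha)^2}\Bigr]\, e^{\alpha(\Phi(x)-m)} . $$
Because $(\alpha-1)^2\le\alpha^2$ for $\alpha>1$ we have $(\alpha n-1)_+\le\alpha n\le \frac{\alpha^3 n^2}{(1-\alpha)^2}$, and $n+4\le 5n$; absorbing these numerical factors into a universal constant $c$ gives the first displayed inequality of the corollary.

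Finally, for the ``equivalently'' reformulation, take vectors $u,v,w$ (the degenerate case where one of them vanishes is trivial). By Lemma~\ref{3formvariation} the supremum of $\Phi_{e_1e_2e_3}$ over $h$-unit triples equals $f$, and by antisymmetry under $e\mapsto-e$ its infimum is $-f$, so $|\Phi_{e_1e_2e_3}|\le f$ for all $h$-unit $e_1,e_2,e_3$. Normalizing each of $u,v,w$ by its $h$-length (recall $\Phi_{uu}=\|u\|_h^2$) gives $\Phi_{uvw}^2\le \Phi_{uu}\Phi_{vv}\Phi_{ww}\, f^2$, and substituting the bound on $f^2$ completes the proof. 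The only genuinely delicate point is the regularity bookkeeping in the first two steps — checking that squaring preserves the barrier-sense differential inequality and that Lemma~\ref{LfAB} tolerates the non-smoothness of $f$ — but this is routine given the machinery already set up for the Proposition.
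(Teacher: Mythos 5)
Your proposal is correct and follows essentially the same route as the paper: square $f$ to get $L(f^2)\ge f^2+\tfrac12 f^4$ from the Proposition, feed this into Lemma~\ref{LfAB} (with the barrier interpretation for the non-smooth maximum, which the paper leaves implicit), and deduce the bilinear form bound by normalizing with Lemma~\ref{3formvariation}. The constant bookkeeping ($(\alpha n-1)_+\le \alpha^3 n^2/(1-\alpha)^2$, $n+4\le 5n$) is also fine.
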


   Similarly, Lemma \ref{lgij} implies the estimate $ L \|g\|  \ge  \|g\|  +\frac{1}{2} \|g\|^2$, where $
\| g \|= \sup_{e: |e|=1} g_{ee}
$ is the operator norm of $g$ in the metric $h$. From this we get

     \begin{corollary}
   For every $\alpha>1$,
   $$
   \max_{e: |e|=1} g^2_{ee} \le  c\frac{\alpha^3 n^2}{(1-\alpha)^2} e^{\alpha(\Phi(x) - m)},
   $$
   where $c > 0$ is a universal constant.
   \end{corollary}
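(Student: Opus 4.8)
The plan is to repeat, with the tensor $g$ in place of the third-order form $\Phi_{eee}$, the argument just used for $\max_e \Phi_{eee}$. Concretely, one first proves the pointwise inequality
$$ L\|g\| \ \ge\ \|g\| + \frac{1}{2}\|g\|^2, \qquad \|g\|(x):=\sup_{|e|=1} g_{ee}(x) $$
(here $|e|=1$ refers to Riemannian length), and then feeds $f:=\|g\|$ into Lemma~\ref{LfAB} with $A=1$, $B=1/2$.

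\textbf{Step 1 (the differential inequality).} Fix $x_0$. Because $g$ is a non-negative symmetric $2$-tensor and the Riemannian unit sphere of the tangent space at $x_0$ is compact, there is a unit vector $v$ with $\|g\|(x_0)=g_{vv}(x_0)\ge 0$. Extend $v$ to a local unit vector field with $\nabla v=0$ and $\Delta v=0$ at $x_0$ (cf.\ Theorem~4.6 of \cite{CK}); then $\|g\|(x)\ge g_{vv}(x)$ everywhere with equality at $x_0$, so $L\|g\|(x_0)\ge L(g_{vv})(x_0)$ by the maximum principle. Contracting Lemma~\ref{lgij} with $v^i v^j$ and using $\nabla v=\Delta v=0$ at $x_0$, exactly as in the preceding proposition, gives at $x_0$
$$ L(g_{vv}) = g_{vv} + \frac{1}{2}\,g_{kv}g^{k}_{v} + 2\,\nabla_p\Phi_{vab}\,\nabla^p\Phi_v^{ab} + 8\,\mbox{\rm{Riem}}_{vabc}\,\mbox{\rm{Riem}}_v^{abc}. $$
The two last summands are squared Hilbert--Schmidt norms of tensors, hence $\ge 0$; and by Cauchy--Schwarz $g_{kv}g^{k}_{v}\ge g_{vv}^2=\|g\|^2$. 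The asserted inequality follows.

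\textbf{Step 2 (the maximum principle).} The function $f=\|g\|$ is non-negative and, by Step~1, satisfies $Lf\ge f+\frac{1}{2} f^2$; it also obeys the rough bound $f\le c\,e^{\alpha\Phi}$ required to run the proof of Lemma~\ref{LfAB} (indeed $g_{ee}=\sum_{a,b}\Phi_{eab}^2$ in a Riemannian orthonormal frame, so this already follows from Corollary~\ref{three-est}). Hence Lemma~\ref{LfAB} with $A=1$, $B=\frac{1}{2}$ gives, for every $\alpha>1$,
$$ \sup_{|e|=1} g_{ee} \;\le\; 2\Bigl[(\alpha n-1)_+ + \frac{\alpha^3(n+4)n}{(1-\alpha)^2}\Bigr]e^{\alpha(\Phi(x)-m)} \;\le\; c\,\frac{\alpha^3 n^2}{(1-\alpha)^2}\,e^{\alpha(\Phi(x)-m)}, $$
where $c$ is universal and we used $(\alpha n-1)_+\le \alpha^3 n^2/(1-\alpha)^2$ for $\alpha>1$. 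Since $g\succeq 0$, one has $\max_{|e|=1}g_{ee}^2=\bigl(\sup_{|e|=1}g_{ee}\bigr)^2$, which yields the stated estimate.

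The one genuine subtlety is that the operator norm $\|g\|$ is only a maximum of the smooth functions $x\mapsto g_{vv}(x)$ over unit $v$, hence not a priori differentiable; this is the main obstacle, and it is dealt with exactly as in the preceding proposition --- at each point where the maximum principle is applied (in Step~1 and inside the proof of Lemma~\ref{LfAB}) one substitutes for $\|g\|$ the smooth minorant $g_{vv}$ coming from the maximizing direction, which agrees with $\|g\|$ at that point. Everything else --- the contraction of Lemma~\ref{lgij}, discarding the two non-negative tensorial terms, and the bookkeeping in Lemma~\ref{LfAB} --- is routine.
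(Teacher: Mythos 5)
Your argument is essentially the paper's own: the paper likewise obtains $L\|g\| \ge \|g\| + \tfrac{1}{2}\|g\|^2$ from Lemma \ref{lgij} (via the same parallel-extension maximum-principle device used for the third-derivative proposition) and then invokes Lemma \ref{LfAB} with $A=1$, $B=\tfrac{1}{2}$; your Step 1 merely spells out the contraction with $v^iv^j$, the two discarded non-negative terms and the Cauchy--Schwarz step $g_{kv}g^k_v \ge g_{vv}^2$, all of which the paper leaves implicit. One caveat: your closing claim that $\max_e g_{ee}^2 = (\sup_e g_{ee})^2$ ``yields the stated estimate'' is not literally accurate, since squaring the bound on $\sup_e g_{ee}$ gives $c^2\alpha^6 n^4(1-\alpha)^{-4}e^{2\alpha(\Phi(x)-m)}$ rather than the displayed right-hand side; however, this mismatch between the quantity actually bounded ($\|g\|$) and the squared quantity appearing in the statement is already present in the paper's one-line derivation, so your proof coincides with the paper's both in method and in this imprecision.
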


  \end{document}